\newcommand{\indicator}[1]{\ensuremath{\mathbf{1}_{\{#1\}}}}
\newcommand{\oindicator}[1]{\ensuremath{\mathbf{1}_{{#1}}}}
\numberwithin{equation}{section}
\DeclareMathOperator{\cov}{Cov}
\DeclareMathOperator{\tr}{tr}
\renewcommand\Re{\operatorname{Re}}
\renewcommand\Im{\operatorname{Im}}
\newcommand{\eps}{\varepsilon}
\theoremstyle{plain}
\newtheorem{theorem}{Theorem}[section]
\newtheorem{proposition}[theorem]{Proposition}
\newtheorem{lemma}[theorem]{Lemma}
\newtheorem{corollary}[theorem]{Corollary}
\theoremstyle{definition}
\newtheorem{definition}[theorem]{Definition}
\newtheorem{remark}[theorem]{Remark}
\newcommand{\NN}{\mathbb{N}}
\newcommand{\RR}{\mathbb{R}}
\newcommand{\CC}{\mathbb{C}}
\newcommand{\ee}{\varepsilon}
\newcommand{\pp}{\varphi}
\newcommand{\EE}{\mathbb{E}}
\newcommand{\PP}{\mathbb{P}}
\newcommand{\sign}{\text{sign}}
\begin{document}
\title{Spectrum of L\'evy-Khintchine Random Laplacian Matrices}

\author[A. Campbell]{Andrew Campbell}
\address{Department of Mathematics, University of Colorado at Boulder, Boulder, CO 80309}
\email{andrew.j.campbell@colorado.edu}

\author[S. O'Rourke]{Sean O'Rourke}
\thanks{S. O’Rourke has been supported in part by NSF CAREER grant DMS-2143142.}
\address{Department of Mathematics, University of Colorado at Boulder, Boulder, CO 80309}
\email{sean.d.orourke@colorado.edu}
\begin{abstract}
We consider the spectrum of random Laplacian matrices of the form $L_n=A_n-D_n$ where $A_n$ is a real symmetric random matrix and $D_n$ is a diagonal matrix whose entries are equal to the corresponding row sums of $A_n$. If $A_n$ is a Wigner matrix with entries in the domain of attraction of a Gaussian distribution the empirical spectral measure of $L_n$ is known to converge to the free convolution of a semicircle distribution and a standard real Gaussian distribution. 

We consider real symmetric random matrices $A_n$ with independent entries (up to symmetry) whose row sums converge to a purely non-Gaussian infinitely divisible distribution, which fall into the class of L\'evy-Khintchine random matrices first introduced by Jung [Trans Am Math Soc, \textbf{370}, (2018)]. Our main result shows that the empirical spectral measure of $L_n$ converges almost surely to a deterministic limit. A key step in the proof is to use the purely non-Gaussian nature of the row sums to build a random operator to which $L_n$ converges in an appropriate sense. This operator leads to a recursive distributional equation uniquely describing the Stieltjes transform of the limiting empirical spectral measure. 
\end{abstract}

\maketitle

\section{Introduction}


We consider the empirical spectral measure\footnote{The definition for the empirical spectral measure and other notation used throughout is established in Subsection \ref{sec:Notation}.} of random Laplacian-type matrices of the form \begin{equation}\label{eq:LaplacianDef}
	L_n=A_n-D_n
\end{equation} where $A_n=(A_{ij})_{i,j=1}^{n}$ is an $n\times n$ real symmetric random matrix with independent entries up to symmetry, and $D_n$ is a diagonal matrix with $(D_n)_{ii}=\sum_{j=1}^n A_{ij}$. When $A_n$ is a Wigner matrix, i.e.\ $A_n$ has independent entries up to symmetry with mean zero and variance $\frac{1}{n}$, the empirical spectral measure of $A_n$ converges to Wigner's semicircle law, the empirical spectral measure of $D_n$ converges to a standard Gaussian distribution, and it was shown in \cite{BDJ06} that the empirical spectral measure of $L_n$ converges to the free convolution of the semicircle law and the standard real Gaussian measure. In this paper we will consider $A_n$ such that the diagonal entries of $D_n$ converge in distribution, not to the Gaussian distribution, but rather to a non-Gaussian infinitely divisible distribution. This model will include L\'evy matrices, sometimes referred to as heavy-tailed Wigner matrices, where the entries of $A_n$ are independent up to symmetry, but have infinite second moment, see Subsection \ref{subsection:Levy} for more details. Another important example arises when $A_n$ is the adjacency matrix of an Erd\H{o}s-R\'enyi random graph where the expected degree of any vertex remains fixed as the number of vertices goes to infinity. These $A_n$ fall into the class of L\'evy-Khintchine matrices, a generlization of L\'evy matrices defined by Jung in \cite{Jung16}, see Subsection \ref{subsection:Levy-Khintchine} for more on these matrices. 

The term Laplacian comes from graph theory, where the combinatorial Laplacian of a graph with vertex set $\{1,2,\dots,n\}$ is defined by\begin{equation}
	L_{ij}=\begin{cases}
		\deg(i),\,\text{ if } i=j\\
		-1,\,\text{ if } i\sim j\\
		0,\,\text{ if } i\nsim j,
	\end{cases} 
\end{equation} where $i\sim j$ if $\{i,j\}$ is an edge in the graph and $\deg(i)$ is the number of edges incident to a vertex $i$. The combinatorial Laplacian is the negative of what we refer to as the Laplacian. If the entries of $A_n$ are almost surely nonnegative, then $L_n$ is the infinitesimal generator of a (random) continuous time random walk and for this reason $L_n$ is referred to as a Markov matrix in some of the literature. We use the term Laplacian throughout. Spectral properties of real symmetric random Laplacian matrices have been studied in \cite{DingJiang2010,BDJ06,HuangLandon,Jiang2012,Chatterjee2021SpectralPF,MR3777782,MR2948689,MR3321497,MR4193182} and for non-symmetric random Laplacian matrices in \cite{BCC2014} when the entries of $A_n$ are in the domain of attraction of either a real or complex Gaussian random variable. Though because of the widespread use of graph Laplacians this list is incomplete. In these light-tailed cases the limiting spectral measure has a particularly nice free probabilistic interpretation (see \cite{MR3585560} for an introduction to free probability and random matrices). In \cite{BDJ06} Bryc, Dembo, and Jiang proved the following:
\begin{theorem}[Theorem 1.3 in \cite{BDJ06}]\label{thm:BDJ Limit}
	Let $\{X_{ij}:j\geq i\geq 1 \}$ be a collection of i.i.d.\ real random variables with $\EE X_{12}=0$ and $\EE X_{12}^2=1$, $X_{ij}=X_{ji}$ for $1\leq i\leq j$, and let $A_n=\left(X_{ij}/\sqrt{n}\right)_{i,j=1}^n$ be a random real symmetric matrix. With probability one, the empirical spectral measure of the matrix $L_n$ defined in \eqref{eq:LaplacianDef} converges weakly to the free additive convolution of the semicircle and standard Gaussian measures.
\end{theorem}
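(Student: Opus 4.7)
My plan is to use the method of moments, combined with a truncation argument, to first show convergence in expectation of all moments and then upgrade to almost sure convergence via a variance estimate and Borel-Cantelli.

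First I would reduce to the case of bounded entries by a standard truncation: replacing $X_{ij}$ by $X_{ij}\indicator{|X_{ij}|\leq B}$ (and recentering/rescaling), the resulting perturbation of $L_n$ has small Hilbert--Schmidt norm for large $B$, so by a Hoffman--Wielandt-type bound on the Kolmogorov or Wasserstein distance between empirical spectral measures, it suffices to compute the limiting ESD for the truncated model. We may therefore assume that the $X_{ij}$ have all moments finite.

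The heart of the argument is computing $M_k := \lim_{n\to\infty}\frac{1}{n}\EE\tr(L_n^k)$. Expanding the trace as a sum over closed walks $(i_1,\dots,i_k,i_1)$, each step is either a diagonal step ($i_j=i_{j+1}$, contributing $L_{i_j i_j} = -\sum_{l\neq i_j} X_{i_j l}/\sqrt{n}$) or an off-diagonal step ($i_j\neq i_{j+1}$, contributing $X_{i_j i_{j+1}}/\sqrt n$). Expanding each diagonal step as a sum over ``ghost'' neighbors $l_j$ turns the trace into a weighted sum over closed walks on an augmented multigraph where off-diagonal steps are ordinary edges and diagonal steps are stubs to ghost vertices. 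Standard Wigner-type combinatorics shows the leading-order contribution comes from configurations in which each edge (ordinary or ghost) is traversed exactly twice; classifying these configurations by the non-crossing partition they induce on $\{1,\dots,k\}$ should yield
\begin{equation*}
M_k = \sum_{\pi\in NC(k)} \prod_{B\in\pi}\left(\kappa^{\text{free}}_{|B|}(\mu_{\text{sc}}) + \kappa^{\text{free}}_{|B|}(\mu_G)\right),
\end{equation*}
which is the $k$-th moment of $\mu_{\text{sc}}\boxplus\mu_G$ by the free moment-cumulant formula and the additivity of free cumulants under free convolution. Intuitively, a block made of two off-diagonal steps contributes the semicircle free cumulant $1$, while a block of diagonal steps whose ghost neighbors are mutually identified contributes the appropriate Gaussian free cumulant.

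For the almost sure statement I would bound $\var(\frac{1}{n}\tr(L_n^k))=O(n^{-2})$ via the standard two-walk expansion (disconnected pairs of walks cancel against $(\EE\frac{1}{n}\tr(L_n^k))^2$, and connected pairs lose a factor of $n^{-2}$ because their combined index set is smaller), so that Borel--Cantelli gives $\frac{1}{n}\tr(L_n^k)\to M_k$ almost surely for every fixed $k$. Since the Gaussian has moments $(2k-1)!!$ and the semicircle is compactly supported, the $M_k$ satisfy Carleman's condition and uniquely determine a probability measure, so the method of moments delivers the claimed almost sure weak convergence to $\mu_{\text{sc}}\boxplus\mu_G$. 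The main obstacle I anticipate is the combinatorial identification in the middle step: one must correctly isolate the dominant walks, verify that configurations in which ghost indices accidentally coincide with walk indices contribute only at subleading order, and check that the colored block weights assemble cleanly into the advertised sum of free cumulants of the two component measures.
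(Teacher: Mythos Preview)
This theorem is not proved in the present paper: it is quoted verbatim as Theorem~1.3 of Bryc--Dembo--Jiang \cite{BDJ06} and used only as background for the light-tailed setting, while the paper's own contribution concerns the purely non-Gaussian L\'evy--Khintchine case. So there is no ``paper's own proof'' to compare against.

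That said, your proposal is a reasonable sketch of the strategy in \cite{BDJ06} itself: truncate to bounded entries, expand $\frac{1}{n}\EE\tr(L_n^k)$ over closed walks with an extra ``ghost'' index at each diagonal step, isolate the $O(1)$ contributions, and finish with a variance bound and Carleman. Two comments on the substance:

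\begin{itemize}
\item The step where you assert that the leading contributions are indexed by non-crossing partitions with block weight $\kappa^{\text{free}}_{|B|}(\mu_{\text{sc}})+\kappa^{\text{free}}_{|B|}(\mu_G)$ is the entire difficulty, and your heuristic (``a block of diagonal steps whose ghost neighbors are mutually identified contributes the appropriate Gaussian free cumulant'') is too quick. The free cumulants of the standard Gaussian are \emph{not} concentrated at order~$2$; for instance $\kappa_4^{\text{free}}(\mu_G)=1$. What actually happens in the combinatorics is that a run of $r$ consecutive diagonal steps at the same vertex produces a classical-Gaussian moment $(r-1)!!$ (from pair matchings of the $r$ ghost indices, including crossing ones), not a free cumulant. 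It is only after summing over how these diagonal runs interleave with the off-diagonal tree structure that the moments of $\mu_{\text{sc}}\boxplus\mu_G$ appear. In \cite{BDJ06} this identification is done via an explicit recursion for the moment generating function rather than by directly exhibiting the free-cumulant decomposition; if you want to write the proof in the form you propose, you will need a further M\"obius-type inversion to pass from the ``classical moment per diagonal run'' picture to the ``free cumulant per non-crossing block'' picture.
\item Your variance bound and Borel--Cantelli step are correct in outline and are exactly how \cite{BDJ06} upgrade to almost sure convergence.
\end{itemize}

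In short: right overall plan, but the bridge from the walk combinatorics to the advertised free-cumulant sum is where the real work lies, and your current description glosses over it.
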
 The analogous free probabilistic limit was established in \cite{BCC2014} for non-symmetric $A_n$. While some of the above references study sparse Laplacian matrices, none consider random Laplacian matrices with heavy-tailed entries or sparse Laplacian matrices where the expected number of nonzero entries in a row is uniformly bounded in $n$.

Many of the tools and techniques we employ were developed in the study of heavy-tailed real symmetric, or \emph{L\'evy}, matrices by Bordenave, Caputo, and Chafa\"i in \cite{SymHeavy}. L\'evy matrices were introduced in \cite{PhysRevE.50.1810} as heavy-tailed versions of Wigner matrices. For the purposes of this paper an important distinction between L\'evy and Wigner matrices is that the row sums of a Wigner matrix converge in distribution to a Gaussian random variable, while the row sums of a L\'evy matrix converge to an $\alpha$-stable distribution for $0<\alpha<2$. The techniques in \cite{SymHeavy} were extended by Jung in \cite{Jung16} to random matrices whose row sums converge in distribution to an infinitely divisible distribution.

\subsection{L\'evy Matrices}\label{subsection:Levy} L\'evy matrices are the heavy-tailed analogue of Wigner matrices, where the entries are independent up to symmetry, but fail to have two finite moments.

\begin{definition}\label{def:LevyMatrices}
	A real symmetric random matrix $X$ is a \textit{L\'evy Matrix} if the diagonal entries are zero, the entries above the diagonal are independent and identically distributed (i.i.d.)\ copies of a real random variable $\xi$, and there exists $\theta\in[0,1]$ and $\alpha\in(0,2)$ such that  \begin{itemize}
		\item[(i)] $\lim\limits_{t\rightarrow\infty}\frac{\PP(\xi\geq t)}{\PP(|\xi|\geq t)}=\theta,$
		\item[(ii)] $\PP(|\xi|\geq t)=t^{-\alpha}L(t)$ for all $t\geq 1$,  where $L$ is a slowly varying function, i.e.\ $L(tx)/L(t)\rightarrow1$ as $t\rightarrow\infty$ for any $x>0$.
	\end{itemize} 
\end{definition}

The conditions in Definition \ref{def:LevyMatrices} as the same conditions for $\xi$ to be in the domain of attraction of an $\alpha$-stable distribution. Unlike with Wigner matrices, the natural scaling on an $n\times n$ L\'evy matrix $X$ is not $\sqrt{n}$, but instead \begin{equation}\label{eq:scaling}
a_n:=\inf\left\{x:\PP(|X_{12}|>x)\leq n^{-1}\right\}.
\end{equation} For an $n\times n$ L\'evy matrix $X_n$, the matrix $A_n$ in equation \eqref{eq:LaplacianDef} will be defined as $A_n:=a_n^{-1}X_n$. We will refer to $A_n$ as a \textit{normalized} L\'evy matrix.\subsection{L\'evy-Khintchine Matrices}\label{subsection:Levy-Khintchine}

Jung in \cite{Jung16} defined a generalization of L\'evy matrices. Instead of assuming the entries are in the domain of attraction of an $\alpha$-stable distribution, the entries are in the domain of attraction of any infinitely divisible distribution.

\begin{definition}\label{def:LevyKhintchine}
	A sequence of real symmetric random matrices $\{A_n\}_{n\geq 1}$ is called a \emph{L\'evy-Khintchine} random matrix ensemble with characteristics $(\sigma^2,b,m)$ if for each $n$, $A_n=(A_{ij}^{(n)})_{i,j=1}^n$ is $n\times n$, the diagonal entries of $A_n$ are $0$, the non-diagonal entries are i.i.d.\ up to symmetry and $\sum_{j=1}^n A_{1j}^{(n)}$ converges in distribution as $n\rightarrow\infty$ to a random variable $Y$ with \begin{equation}\label{eq:IDcond}
	\log(\EE e^{itY})=-\frac{1}{2}t^2\sigma^2+itb+\int_\RR \left( e^{itx}-1-\frac{itx}{1+x^2}\right) dm(x)
	\end{equation} for all $t\in\RR$, where $m$ is a measure on $\RR$ with $m(\{0\})=0$ satisfying \begin{equation}\label{eq:squaresumcond}
	\int_\RR 1\wedge|x|^2dm(x)<\infty.
	\end{equation}
\end{definition}

\begin{remark}
	It is worth noting the that distribution of $A_{12}^{(n)}$ may change with $n$. However, for many important examples $A_{12}^{(n)}$ is either a rescaling of a fixed random variable or is the product of a fixed random variable and a Bernoulli random variable where only the Bernoulli random variable is changing with $n$.
\end{remark}

A random variable $Y$ satisfying (\ref{eq:IDcond}) is said to have an infinitely divisible distribution with characteristics $(\sigma^2,b,m)$ and \eqref{eq:IDcond} is referred to as the L\'evy-Khintchine representation of $Y$. When $\sigma=0$, $Y$ is called purely non-Gaussian and has an important connection to Poisson point processes with intensity measure $m$ outlined in Propositions \ref{prop:IDconv} and \ref{OrderStatConv}. 

\subsection{Notation}\label{sec:Notation} Throughout this paper we use $\Rightarrow$ to denote weak convergence of probability measure, convergence in distribution of random variables, and vague convergence of finite measures. 
For an $n\times n$ real symmetric  matrix $M$ the eigenvalues will always be considered in non-increasing order $\lambda_1\geq\lambda_2\geq\dots\geq\lambda_n$. We define the empirical spectral measure of an $n\times n$ real symmetric matrix $M$ to be the probability measure\begin{equation}
	\mu_{M}=\frac{1}{n}\sum_{i =1}^n\delta_{\lambda_i},
\end{equation} where $\delta_x$ is the Dirac delta measure at $x$.

A coupling of two probability measures $\mu_1$ and $\mu_2$ is a random tuple $(X,Y)$ such that $X$ is $\mu_1$ distributed and $Y$ is $\mu_2$ distributed. The symbol $\overset{d}{=}$ will be used to denote equality in distribution of random variables and $\mathcal{L}(X)$ will be used to denote the distribution of a random variable $X$. For two complex-valued square integrable random variables $\xi$ and $\psi$, we define the covariance between $\xi$ and $\psi$ as $\cov(\xi,\psi):=\EE[(\xi-\EE\xi)\overline{(\psi-\EE\psi)}]$. 

Throughout we will consider Poisson point processes on $\bar{\RR}\setminus\{0\}$, the one point compactification of $\RR$ with the origin removed, with some intensity measure $m$. We will consider both finite and infinite measures, so for convenience we will denote the points of this process by $\{y_i\}_{i\geq 1}$ for general $m$ where $y_i=0$  for any $i$ greater than an appropriate (possibly identically infinite) Poisson random variable, and when considering a specific finite measure $m$ we will denote the points by $\{y_i\}_{i=1}^N$ for a Poisson random variable $N$.

For a topological space $E$, let $C_K(E)$ denote the set of real-valued continuous functions on $E$ with compact support. We will use $\CC_+$ to be the set of complex numbers with strictly positive imaginary part. For a probability measure $\mu$ on $\RR$ we define the function $s_\mu:\CC_+\rightarrow\CC_+$ by\begin{equation}\label{eq:StieltjesTransformDefinition}
	s_\mu(z)=\int_\RR\frac{1}{x-z}d\mu(x),
\end{equation} and refer to $s_\mu$ as the Stieltjes transform of $\mu$.

We will use asymptotic notation ($O, o, \Theta$, etc.) under the assumption that $n\rightarrow\infty$ unless otherwise stated. $X=O(Y)$ if $X\leq CY$ for an absolute constant $C>0$ and all $n \geq C$, $X=o(Y)$ if $X\leq C_nY$ for $C_n\rightarrow 0$, $X=\Theta(Y)$ if $cY\leq X\leq CY$ for absolute constants $C,c>0$ and all $n \geq C$, and $X\sim Y$ if $X/Y\rightarrow1$.

\section{Main results} Throughout we will assume $A_n=(A_{ij}^{(n)})_{i,j=1}^n$ is the $n$-th element of a L\'evy-Khintchine random matrix ensemble with characteristics $(0,b,m)$, $D_n$ is a diagonal matrix with $(D_n)_{ii}=\sum_{j=1}^n A_{ij}^{(n)}$, and \begin{equation}\label{eq:GenLaplacianDef}
L_n=A_n-D_n.
\end{equation}  

\begin{definition}
	Let $\{A_n\}_{n\geq 1}$ be a L\'evy-Khintchine random matrix ensemble with characteristics $(0,b,m)$ and for each $n\geq1$ let $V_{1n}\geq V_{2n}\geq\dots\geq V_{(n-1)n}$ be the order statistics of $\{|A_{2n}^{(n)}|,|A_{3n}^{(n)}|,\dots,|A_{nn}^{(n)}|\}$.   $\{A_n\}_{n\geq 1}$ satisfies Condition \textbf{C1} if:\begin{itemize}
		\item The Poisson point process with intensity measure $m$ is almost surely summable, which from Campbell's Formula (Lemma \ref{lemma:CampbellsFormula}) is a equivalent to \begin{equation}\label{eq:SumCond}
		\int_{\RR\setminus\{0\}}|x|\wedge 1 dm(x)<\infty.
		\end{equation}
		
		\item $(V_{jn})_{j\geq1}$ is almost surely uniformly integrable in $n$, i.e. \begin{equation}\label{eq:A:almost sure summability}
		\lim\limits_{k\rightarrow\infty}\sup_{n> k}\sum_{i=k+1}^{n} V_{i,n}=0,
		\end{equation} almost surely.
		
		\item There exists $\ee>0$ and $C>0$ such that \begin{equation}\label{eq:MeasureDecayAssumption}
		m(\{x\in\RR:|x|\geq t\})\leq Ct^{-\ee},
		\end{equation} and \begin{equation}\label{eq:Entry Tail Assumption}
		n\PP\left(|A_{12}^{(n)}|\geq t\right)\leq Ct^{-\eps}
	\end{equation} for all $t>1/4$ and for every $n\in\NN$.
	\end{itemize}
\end{definition}

\begin{remark}\label{remark:examples}
	Some interesting and important examples of random matrices satisfying condition \textbf{C1} include\begin{enumerate}
		\item[(i)]\label{example:levy} $A_n=a_n^{-1}X_n$ for a L\'evy matrix $X_n$ with $\alpha\in(0,1)$ and $a_n$ as defined in $\eqref{eq:scaling}$. In this case $m=m_\alpha$ where $m_\alpha$ is the measure on $\RR$ with density $$f(x)=\alpha |x|^{-(1+\alpha)}\left(\theta\indicator{x>0}+(1-\theta)\indicator{x<0} \right),$$ for $\alpha$ and $\theta$ as in Definition \ref{def:LevyMatrices}. \footnote{For $\alpha\in[1,2)$, \eqref{eq:SumCond} and \eqref{eq:A:almost sure summability} will not hold. For this reason we believe the case where $\alpha\in[1,2)$ would require different techniques than those used here.}
		
		\item[(ii)] The adjacency matrix $A_n$ of an Erd\H{o}s-R\'enyi random graph $G(n,p)$ with $np\rightarrow \lambda\in(0,\infty)$. In this case the row sums of $A_n$ converge to Poisson random variables and $m=\lambda\delta_1$.
		
		\item[(iii)] The matrix $A_n=\frac{1}{\sqrt{\lambda}}E_n\circ X_n$ where $E_n$ is the adjacency matrix of an Erd\H{o}s-R\'enyi random graph $G(n,p)$ with $np\rightarrow \lambda\in(0,\infty)$, $X_n$ is a chosen from the Gaussian Orthogonal Ensemble (GOE), and $\circ$ is the Hadamard product of matrices. In this case $m=\lambda G_\lambda$ where $G_\lambda$ is the centered Gaussian probability measure with variance $\frac{1}{\lambda}$.
	\end{enumerate}
\end{remark}

The first two points of Condition \textbf{C1} will be important for handling the diagonal entries of $L_n$. \eqref{eq:SumCond} implies a Poisson point process with intensity measure $m$ is almost surely summable, which is stronger than the almost sure square summability implied by \eqref{eq:squaresumcond}. \eqref{eq:A:almost sure summability} implies that the row sums converge to the sum of the Poisson point process with intensity measure $m$. The last point is a technical assumptions needed in the proof of the main theorem given below. Heuristically the last point of Condition \textbf{C1} states that the infinitely divisible random variable $Y$ in Definition \ref{def:LevyKhintchine} has at least $t^{-\eps}$ tail decay, and this tail assumption holds entry-wise uniformly in $n$. The assumption in \eqref{eq:Entry Tail Assumption} is technical and used to prove tightness of the empirical spectral measures, but perhaps is not necessary and there may be room for refinement. Those choice of $1/4$ in the final condition is arbitrary, any positive constant would be sufficient.

\begin{theorem}[Eigenvalue Convergence for Laplacian L\'evy-Khintchine matrices]\label{thm:ESMConv}
	Let $\{A_n\}_{n\geq 1}$ be a L\'evy-Khintchine random matrix ensemble with characteristics $(0,b,m)$ all defined on the same probability space satisfying Condition \textbf{C1}, and for every $n\in\NN$ let $L_n$ be defined by \eqref{eq:GenLaplacianDef}. Then there exists a deterministic probability measure $\mu_m$ depending only on $m$ such that a.s. $\mu_{L_n}$ converges weakly to $\mu_m$, as $n\rightarrow\infty$.
\end{theorem}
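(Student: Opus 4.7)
The plan is to follow the Bordenave--Caputo--Chafa\"i objective-method / local-weak-convergence strategy adapted to the Laplacian setting, as pioneered for L\'evy matrices in \cite{SymHeavy} and extended by Jung in \cite{Jung16}. The goal is to construct a random self-adjoint operator $L$ on a rooted random weighted tree built from a Poisson point process with intensity $m$, to show that the diagonal resolvent entries of $L_n$ converge in distribution to the root--root entry of $(L-zI)^{-1}$, and to characterise the limit via a recursive distributional equation (RDE) with a unique solution. A first preliminary step is tightness of $\{\mu_{L_n}\}_{n\geq 1}$, which is not automatic since the entries of both $A_n$ and $D_n$ are heavy-tailed; the uniform tail bound \eqref{eq:Entry Tail Assumption} is meant precisely to give, via truncation of the large entries together with a crude operator-norm estimate on the remainder, that $\limsup_n \EE\, \mu_{L_n}(\{|x|\geq T\})\to 0$ as $T\to\infty$.

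The main analytic step uses Schur's complement. Writing $s_n(z) = \frac{1}{n}\sum_i R_{ii}(z)$ for $R(z)=(L_n-zI)^{-1}$ and $z\in\CC_+$, Schur gives
\begin{equation*}
R_{ii}(z) = \frac{-1}{z + (D_n)_{ii} + X_i^{T}(L_n^{(i)}-zI)^{-1}X_i},
\end{equation*}
where $X_i$ is the $i$-th row of $A_n$ with its $i$-th entry deleted and $L_n^{(i)}$ is the corresponding principal minor. By Condition \textbf{C1}, a typical row of $A_n$ converges as an ordered point process to a Poisson point process $\{y_k\}$ on $\bar{\RR}\setminus\{0\}$ with intensity $m$, and $(D_n)_{ii}$ converges jointly to $\sum_k y_k$ (the summability \eqref{eq:SumCond} and uniform integrability \eqref{eq:A:almost sure summability} are what legitimise this). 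Heuristically the Schur quadratic form should contribute $\sum_k y_k^{2} R_k(z)$, with $(R_k)$ i.i.d.\ copies of the limit $R(z)$, producing the RDE that characterises $\mu_m$. Realising this rigorously amounts to building the weighted Poisson-tree operator $L$, checking its essential self-adjointness, and proving local-weak convergence of the resolvent diagonals of the matrix model to those of $L$ at its root.

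Uniqueness of the RDE solution --- and hence determinism of $\mu_m$ --- should follow from a contraction argument on a space of distributions of Stieltjes-transform-like random variables, in the style of \cite{SymHeavy,Jung16}; for $z$ with sufficiently large imaginary part this is standard, and one then extends to all of $\CC_+$ by analytic continuation. Finally, to upgrade in-expectation convergence of $s_n(z)$ to almost-sure weak convergence of $\mu_{L_n}$, I would truncate using \eqref{eq:Entry Tail Assumption}, apply a Lipschitz / McDiarmid concentration argument to the bounded-entry model, and dispatch the heavy-tailed residue via Borel--Cantelli.

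The step I expect to be the main obstacle is the joint-convergence analysis of the Schur quadratic form. Unlike in the Wigner or L\'evy setting, $X_i$ is tightly entangled with $L_n^{(i)}$: the entries of $X_i$ reappear inside the diagonal entries of $L_n^{(i)}$ because they contribute to the degrees of the neighbours of the deleted vertex. Concretely, if $L_n'$ denotes the genuine Laplacian of the submatrix $A_n^{(i)}$ on the remaining $n-1$ vertices, then $L_n^{(i)} = L_n' - \diag(X_i)$, and one must argue that the diagonal correction $\diag(X_i)$ can be absorbed --- for instance by a resolvent identity exploiting the $\ell^1$-control on $X_i$ coming from Condition \textbf{C1}. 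This coupling between the adjacency and degree parts is precisely where the Laplacian case must do something genuinely new relative to \cite{Jung16}.
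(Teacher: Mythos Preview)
Your overall strategy---local weak convergence to a random self-adjoint tree operator built from a Poisson point process of intensity $m$, resolvent convergence at the root, tightness, and a second step to pass from expectation to almost-sure convergence---is the paper's. The two substantive methodological differences are worth noting.

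First, the paper does \emph{not} attack the Schur complement of $L_n$ directly. Instead it encodes the Laplacian structure on the \emph{limiting} object by introducing a Poisson weighted infinite tree \emph{with loops} ($\mathbf{PWITL}(m)$): to each vertex $v$ of the ordinary $\mathbf{PWIT}(m)$ one attaches a self-loop of weight $y_{vv}=-y_{\text{parent edge}}-\sum_k y_{vk}$, i.e.\ minus the row sum at $v$. The operator $L$ on $\ell^2(\NN^f)$ has $\langle\delta_v,L\delta_v\rangle=y_{vv}$, and local convergence $(L_n,1)\to(L,\varnothing)$ is proved by the usual bijection-building from \cite{SymHeavy,Jung16}, with the uniform summability \eqref{eq:A:almost sure summability} of Condition~\textbf{C1} invoked only to show that the \emph{diagonal} entries $\langle\delta_v,\hat\sigma_n^{-1}L_n\hat\sigma_n\delta_v\rangle$ converge. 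The entanglement you flag---$X_i$ reappearing in the diagonal of the minor $L_n^{(i)}$---never arises, because the resolvent identity producing the RDE is run on the \emph{limit} operator (decomposed as $L=C+\bigoplus_k L_k$ with $C$ carrying the loop weights), where the tree structure makes the $L_k$ genuinely i.i.d.\ and independent of $\{y_k\}$. You are right that this coupling is the new obstacle for a Laplacian; the paper's answer is to push it entirely to the limit via the loop device rather than to control it at finite $n$ by a resolvent perturbation in $\diag(X_i)$.

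Second, the upgrade from $\EE s_{L_n}(z)\to\EE s_\varnothing(z)$ to almost-sure convergence is not done by truncation plus McDiarmid. The paper proves a \emph{bipointed} local convergence $(L_n\oplus L_n,(1,2))\to(L\oplus L',(\varnothing,\varnothing))$ with $L,L'$ independent copies; this gives $\cov\bigl(R_n(z)_{11},R_n(z)_{22}\bigr)\to 0$, and hence by exchangeability $\var\, s_{L_n}(z)\to 0$. Together with tightness (obtained via a Schatten bound and a moment estimate using \eqref{eq:Entry Tail Assumption}, not a truncation-plus-operator-norm argument), this is what is used. Your McDiarmid route is not wrong in spirit, but the bounded-differences constants after truncation are delicate in this heavy-tailed regime; the covariance argument is softer and is the standard device from \cite{SymHeavy}.

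Finally, uniqueness of the RDE is \emph{not} used in the paper's proof of Theorem~\ref{thm:ESMConv}: the limit is simply defined as $\mu_m:=\EE\mu_\varnothing$ for the root spectral measure of $L$, which is automatically deterministic. Uniqueness is established separately, for Theorem~\ref{thm:RDEofStieltjes}, and the argument there is more elaborate than a large-$\Im z$ contraction plus analytic continuation.
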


While the random matrices satisfying Condition \textbf{C1} may appear very different for different $m$, a general description of $\mu_m$ is available through its Stieltjes transform and a recursive distributional equation (RDE).  A recursive distributional equation is an equation of the form \begin{equation}\label{eq:BasicRDE}
	X\overset{d}{=}g((Y_1,X_1),(Y_2,X_2),\dots)
\end{equation} where $\{X_n\}_{n=1}^\infty$ are i.i.d.\ copies of $X$ and $\{Y_n\}_{n=1}^\infty$ is some sequence of random variables independent from $\{X_n\}_{n=1}^\infty$. While we do not use existing results from the literature we did find the survey \cite{AldousBandyopadhyay2005} and the unpublished manuscript \cite{Alsmeyer2012} helpful for better understanding RDEs and contraction arguments in proving uniqueness of solutions. We encourage the interested reader to begin there for more information on RDEs.

\begin{theorem}[Recursive Distributional Equation for Stieltjes Transform of $\mu_m$]\label{thm:RDEofStieltjes}
	Let $\mu_m$ be the limiting deterministic measure from Theorem \ref{thm:ESMConv} and let $s_m(z)=\int_\RR\frac{1}{x-z}d\mu_m(x)$ be the Stieltjes transform of $\mu_m$. Then for every $z\in\CC_+$, $s_m(z)=\EE s_\varnothing(z)$ where $s_\varnothing$ is the Stieltjes transform of a random probability measure. Moreover, the distribution of $s_\varnothing$ is the unique distribution on the space of Stieltjes transforms of probability measures such that \begin{equation}\label{eq:RDEofStieltjes}
	s_\varnothing(z)\overset{d}{=}-\left(z-\sum_{j=1}^\infty\frac{y_j}{s_j(z)y_j-1} \right)^{-1}\text{ for all } z\in\CC_+,
	\end{equation} where $\{y_j\}_{j\geq 1}$ is a Poisson point process with intensity measure $m$ and $\{s_j\}$ is a collection of i.i.d.\ copies of $s_\varnothing$ independent from the point process. 
\end{theorem}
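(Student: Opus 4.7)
The plan is to realize $s_\varnothing(z)$ as the diagonal resolvent of a random tree operator and then derive (\ref{eq:RDEofStieltjes}) by a Schur complement computation at the root. Let $\mathcal{T}$ be the random rooted tree in which the root $\varnothing$ has offspring labelled by the points $\{y_j\}_{j\ge 1}$ of a Poisson point process with intensity $m$, and recursively each vertex has an independent copy of the same offspring structure. Define the random Laplacian operator $L$ on $\mathcal{T}$ by $L_{uv}=y_{uv}$ on each parent--child edge and $L_{uu}=-\sum_{v\sim u}y_{uv}$; condition (\ref{eq:SumCond}) guarantees this is a.s.\ a well-defined self-adjoint operator on finitely supported vectors. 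Via the local convergence $L_n\Rightarrow L$ that underlies the proof of Theorem \ref{thm:ESMConv}, $s_\varnothing(z):=\delta_\varnothing^{\ast}(L-zI)^{-1}\delta_\varnothing$ is a.s.\ the Stieltjes transform of a probability measure and $s_m(z)=\EE\,s_\varnothing(z)$.

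To obtain the RDE I apply the Schur complement at $\varnothing$. Deleting the root disconnects $\mathcal{T}$ into i.i.d.\ subtrees $\mathcal{T}_j$ rooted at each child $j$, but the diagonal entry of $L^{(\varnothing)}$ at $j$ retains the weight $-y_j$ from the deleted edge. Writing $\tilde L_j$ for the Laplacian of $\mathcal{T}_j$ as a standalone tree and $s_j(z):=\delta_j^{\ast}(\tilde L_j-zI)^{-1}\delta_j$, the identity $L^{(\varnothing)}|_{\mathcal{T}_j}=\tilde L_j-y_j\,\delta_j\delta_j^{\ast}$ together with Sherman--Morrison yields
\[
\delta_j^{\ast}(L^{(\varnothing)}-zI)^{-1}\delta_j=\frac{s_j(z)}{1-y_j s_j(z)},
\]
and cross terms between distinct subtrees vanish by disconnectedness. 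Substituting into $s_\varnothing(z)^{-1}=L_{\varnothing\varnothing}-z-\sum_j y_j^2\,\delta_j^{\ast}(L^{(\varnothing)}-zI)^{-1}\delta_j$ with $L_{\varnothing\varnothing}=-\sum_j y_j$, and simplifying via $-y_j-\frac{y_j^2 s_j}{1-y_j s_j}=\frac{y_j}{y_j s_j-1}$, gives exactly (\ref{eq:RDEofStieltjes}); by the self-similar construction of $\mathcal{T}$ the $s_j$ are i.i.d.\ copies of $s_\varnothing$ independent of $\{y_j\}$.

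Existence of a solution thus falls out of Theorem \ref{thm:ESMConv}. For uniqueness, I would couple two putative solutions $s^{(1)},s^{(2)}$ through the same Poisson process $\{y_j\}$ and study $\Delta(z)=\EE|s^{(1)}(z)-s^{(2)}(z)|$. Expanding the difference of the two RDEs via $\tfrac{1}{a}-\tfrac{1}{b}=\tfrac{b-a}{ab}$, using the elementary bounds $|s(z)|\le(\Im z)^{-1}$ and $|y/(ys-1)|\le 1/\Im s$, and controlling the random sum via (\ref{eq:SumCond}) (splitting according to the size of $|y_j|$), should yield an inequality $\Delta(z)\le K(z)\Delta(z)$ with $K(z)<1$ once $\Im z$ is sufficiently large. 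This forces $\Delta\equiv 0$ in that regime, hence equality of the one-dimensional marginals there; equality of the full joint law on the space of Stieltjes transforms then follows by analyticity, since the law of a random analytic map on $\CC_+$ is determined by its finite-dimensional distributions on any open subset. The main obstacle I expect is precisely this uniqueness argument: the pointwise bounds degrade as $\Im z\downarrow 0$, so a direct contraction on all of $\CC_+$ is unlikely, and the passage from uniqueness at large $\Im z$ to uniqueness at the level of laws on function space requires some care in choosing the right function-space metric.
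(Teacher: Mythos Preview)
Your derivation of the RDE is correct and equivalent to the paper's: the paper decomposes $L=C+\bigoplus_k L_k$ and applies the resolvent identity $\tilde R\,C\,R=\tilde R-R$ twice, which is precisely your Schur complement at the root together with the Sherman--Morrison update for the shifted diagonal at each child. Either route yields \eqref{eq:RDEofStieltjes} in the same way.

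For uniqueness, your outline (couple two fixed points through common $\{y_j\}$, contract, extend by analyticity) is also the paper's, and the function-space issue you flag at the end is real but not the main obstruction. The substantive gap is earlier: the bound $|y/(ys-1)|\le 1/\Im s$ is useless without a lower bound on $\Im s(z)$, and for the Stieltjes transform of an \emph{arbitrary} probability measure there is none, since $\Im s(z)=\int \Im z\,|x-z|^{-2}\,d\mu(x)$ can be made arbitrarily small at fixed $z$ by pushing the mass of $\mu$ to infinity. The few large-$|y_j|$ points therefore contribute an unbounded term to your contraction constant $K(z)$, and no choice of $\Im z$ alone repairs this; splitting according to the size of $|y_j|$ handles the small points but not the large ones.

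The paper resolves this by conditioning on $A_M=\{\mu_r([-M,M])\ge\tfrac12,\ \mu_s([-M,M])\ge\tfrac12\}$, which forces $\Im s(z)\ge\tfrac12\,\Im z/((M+\tfrac12)^2+\Im z^2)$ on a box $\mathcal{C}_M\subset\CC_+$ placed at height $f_m(M)$; the height $f_m(M)$ is then chosen so that both the small-$y$ and large-$y$ contributions are at most $4/5$. The contraction is run for the functional $d'_{f_m}(\mu_1,\mu_2)=\inf_{(r,s)}\EE\sum_M 2^{-M}\sup_{z\in\mathcal{C}_M}|r(z)-s(z)|\,\oindicator{A_M}$, giving $d'_{f_m}\le\tfrac{8}{9}\,d'_{f_m}$ for any two fixed points. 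Because the indicators prevent $d'_{f_m}$ from being a metric, a separate Vitali-type argument (Lemma~\ref{lemma:d'=0iffd=0}) shows that $d'_{f_m}=0$ forces the genuine Wasserstein distance $d_{f_m}$ on $\mathcal{P}(\mathcal{S})$ to vanish. Your instinct that the right object is a function-space metric is correct; the missing ingredient is this conditioning on $A_M$ to manufacture a lower bound on $\Im s$, and the subsequent decoupling from it.
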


Theorems \ref{thm:ESMConv} and \ref{thm:RDEofStieltjes} give that the limiting empirical spectral measure of $L_n$ is uniquely determined by a Poisson point process with intensity measure $m$. For the examples outlined in Remark \ref{remark:examples} we will now give some more explicit descriptions of the corresponding point processes. \begin{enumerate}
	\item[(i)] Let $E_1,E_2,\dots$ be a sequence of independent exponential random variables with mean $1$ and $\Gamma_k=E_1+\cdots+E_k$. Additionally let $\ee_1,\ee_2,\dots$ be a sequence of i.i.d.\ random variables such that $$\PP(\ee_1=1)=\theta=1-\PP(\ee_1=-1).$$ Then (see \cite{EmpPP} Proposition 2) the collection $\{\ee_k\Gamma_k^{-1/\alpha}\}_{k\geq1}$ is a Poisson point process with intensity measure $m_\alpha$, the measure arising for L\'evy matrices, example (i) in Remark \ref{remark:examples}.
	\item[(ii)] For the Laplacian of very sparse random graphs, discusses in Remark \ref{remark:examples} (ii), the Poisson point process is quite simple. Let $N$ be a Poisson random variable with mean $\lambda$ and for $k\geq 1$ define $y_k$ by\begin{equation*}
		y_k=\begin{cases}
			1,\, k\leq N,\\
			0,\, k>N
		\end{cases}.
	\end{equation*} Then $\{y_k\}_{k\geq 1}$ is a Poisson point process with intensity measure $\lambda\delta_1$.
	\item[(iii)] For a very sparse GOE matrix described in example (iii) in Remark \ref{remark:examples}, let $Y_1,Y_2,\dots$ be independent standard real Gaussian random variables, and let $N$ be a Poisson random variable with mean $\lambda$. Define \begin{equation*}
		y_k=\begin{cases}
			\frac{Y_k}{\sqrt{\lambda}},\, k\leq N\\
			0,\, k>N
		\end{cases}.
	\end{equation*} Then $\{y_k\}_{k\geq 1}$ is a Poisson point process with intensity measure $\lambda G_\lambda$. This example is explored a bit further in Theorem \ref{thm:RecoveringFreeConvolution} below.
\end{enumerate}

RDE \eqref{eq:RDEofStieltjes} can be written as \begin{equation}\label{eq:New RDE}
	s_\varnothing(z)\overset{d}{=}-\left(z+\sum_{j=1}^\infty y_j-\sum_{j=1}^\infty\frac{y_j^2s_j(z)}{s_j(z)y_j-1} \right)^{-1}.
\end{equation} If we consider a diagonal matrix $\tilde{D}_n$ independent from $A_n$ with independent entries $(\tilde{D}_{n})_{ii}\overset{d}{=}(D_n)_{ii}$ and the matrix $\tilde{L}_n=A_n-\tilde{D}_n$ the work below leading up to the existence of \eqref{eq:RDEofStieltjes} could be adapted in a straightforward way to arrive at the following corresponding RDE for $\tilde{L}_n$, \begin{equation}\label{eq:IndRDE}
s_\varnothing(z)\overset{d}{=}-\left(z+\sum_{j=1}^\infty \tilde{y}_j-\sum_{j=1}^\infty y_j^2s_j(z) \right)^{-1}.
\end{equation} where $\{\tilde{y}_j\}$ is an independent copy of the point process of $\{y_j\}$, independent of $\{s_j\}$. For light-tailed $A_n$, Theorem \ref{thm:BDJ Limit} gives that the limiting spectral measure of $L_n$ is the free additive convolution of the semicircle measure and the Gaussian measure. This is the same limiting spectral measure for $A_n-D_n$ for $A_n$ independent of $D_n$. In contrast, the differences between equations \eqref{eq:New RDE} and \eqref{eq:IndRDE} suggest that for L\'evy-Khintchine $A_n$, the dependence between $A_n$ and $D_n$ can be seen in the limiting measure $\mu_m$.

\subsection{Outline} In Sections \ref{sec:Op} and \ref{sec:PWIT} we define local convergence for operators on $\ell^2(V)$ for a countable set $V$ and use the measure $m$ to build a random operator $L$. In Section \ref{sec:LocConv} we show that $L_n$ converges locally in distribution to $L$, and then in Section \ref{sec:ResConv} we upgrade this to convergence of the empirical spectral measures. Finally in Section \ref{sec:RDE} we show the Stieltjes transform of the limiting empirical spectral measure can be described as the expected value of the unique solution to \eqref{eq:RDEofStieltjes}. In the appendices we prove almost sure tightness of the collection $\{\mu_{L_n}\}_{n\geq1}$ and list some technical lemmas. We end this section with two corollaries of Theorem \ref{thm:RDEofStieltjes}. The first is a continuity result for the map $m\mapsto\mu_m$. In the second we use \eqref{eq:RDEofStieltjes} to recover the free convolution of a semicircle and a standard Gaussian measure from the limiting empirical measure of very sparse random matrices. 

\subsection{Corollaries of Theorem \ref{thm:RDEofStieltjes}} The first corollary of Theorem \ref{thm:RDEofStieltjes} concerns continuity of the mapping $m\mapsto \mu_m$ where $\mu_m$ is the limiting measure of Theorem \ref{thm:ESMConv}. Uniqueness of the solution to the RDE in Theorem \ref{thm:RDEofStieltjes} is crucial to the proof of Corollary \ref{cor:continuity in m} below.

\begin{corollary}\label{cor:continuity in m}
	Let $\bar{\RR}$ denote the one point compactification of $\RR$. Let $\{m_n\}_{n=1}^\infty$ be a collection of measures on $\RR$ such that\begin{equation*}
		\int_{\RR} \min(1,|x|)\, dm_n(x)<\infty,
	\end{equation*} for all $n\in\NN\cup\{\infty\}$, for any $f\in C_K(\bar{\RR}\setminus\{0\})$,\begin{equation*}
		\int_{\RR}fdm_n\rightarrow\int_\RR fdm_\infty,
	\end{equation*} and for any $\eps>0$ \begin{equation}\label{eq:convergence of sums}
	\lim\limits_{k\rightarrow\infty}\sup_{n\geq 1}\PP\left(\sum_{j=k}^\infty |y_j^{(n)}|>\eps \right)= 0,
\end{equation}  where for each $n\in\NN$, $\{y_j^{(n)}\}_{j=1}^\infty$ is a Poisson point process with intensity measure $m_n$. Then $\mu_{m_n}$ converges weakly to $\mu_{m_\infty}$ as $n\rightarrow\infty$, where $\mu_{m_1},\mu_{m_2},\dots$ and $\mu_{m_\infty}$ are the deterministic limiting measures described in Theorem \ref{thm:ESMConv} for a L\'evy-Khintchine random matrix ensemble with characteristics $(0,b,m_1), (0,b,m_2),\dots$ and $(0,b,m_\infty)$ respectively.
\end{corollary}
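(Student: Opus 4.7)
The plan is to prove convergence of $\mu_{m_n}$ to $\mu_{m_\infty}$ by showing pointwise convergence of their Stieltjes transforms on $\CC_+$. By Theorem \ref{thm:RDEofStieltjes}, we have $s_{m_n}(z) = \EE s_{\varnothing,n}(z)$, where $s_{\varnothing,n}$ is the unique solution of the RDE \eqref{eq:RDEofStieltjes} driven by a Poisson point process of intensity $m_n$. Since $|s_{\varnothing,n}(z)| \leq 1/\Im(z)$, it is enough to establish $s_{\varnothing,n} \Rightarrow s_{\varnothing,\infty}$ as random elements in the space $\mathcal{S}$ of Stieltjes transforms of probability measures on $\RR$; bounded convergence then promotes this to $\EE s_{\varnothing,n}(z) \to \EE s_{\varnothing,\infty}(z)$ for each $z \in \CC_+$, which by standard results implies $\mu_{m_n}\Rightarrow \mu_{m_\infty}$.

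First I would observe that $\mathcal{S}$ is a compact metrizable subset of the space of holomorphic functions on $\CC_+$ with the topology of uniform convergence on compact subsets (Montel's theorem applies because of the uniform bound $|s_\mu(z)| \leq 1/\Im(z)$). Hence the sequence of laws $\{\mathcal{L}(s_{\varnothing,n})\}$ is automatically tight on $\mathcal{S}$, and the argument reduces to identifying every subsequential weak limit with $\mathcal{L}(s_{\varnothing,\infty})$. Suppose $s_{\varnothing,n_k} \Rightarrow s^\ast$ along a subsequence. By Skorokhod's representation, I would couple so that $s_{\varnothing,n_k} \to s^\ast$ almost surely in $\mathcal{S}$, and, using the vague convergence of $m_n$ to $m_\infty$ on $C_K(\bar\RR\setminus\{0\})$, independently couple Poisson point processes $\{y_j^{(n_k)}\}_{j}$ and an iid family $\{s_j^{(n_k)}\}$ so that $y_j^{(n_k)}\to y_j^{(\infty)}$ and $s_j^{(n_k)}\to s_j^{(\infty)}$ almost surely for each $j$, with the limiting objects distributed as required on the right-hand side of the RDE for $m_\infty$.

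Next I would pass to the limit in the alternative form \eqref{eq:New RDE} of the RDE. A short case analysis (splitting on whether $|s_j y_j|\leq 1/2$, whether $|s_j y_j|\geq 2$, or the intermediate regime where one uses $|s_jy_j-1|\geq |y_j|\,\Im s_j$ together with $\Im s_j \geq \Im(z)|s_j|^2$) gives a bound
\[
\left|\frac{y_j^{2}\,s_j(z)}{s_j(z)\,y_j - 1}\right|\leq C_z\bigl(|y_j| + |y_j|^{2}\bigr),
\]
with $C_z$ depending only on $\Im(z)$. The summability hypothesis \eqref{eq:convergence of sums} together with $|y_j|^2 \leq |y_j|$ eventually then shows that the tails $\sum_{j>K}$ of both the sum $\sum_j y_j^{(n_k)}$ and the sum $\sum_j (y_j^{(n_k)})^2 s_j^{(n_k)}/(s_j^{(n_k)}y_j^{(n_k)}-1)$ are uniformly in $k$ small in probability as $K\to\infty$. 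For each finite truncation $j\leq K$, continuity of $(y,s)\mapsto y^2 s/(sy-1)$ on the relevant set (combined with the almost sure coupling above) gives convergence of the partial sums. Combining truncation and termwise convergence, the right-hand side of the RDE converges in distribution to the corresponding expression with $m_\infty$, so $s^\ast$ solves the RDE driven by $m_\infty$; by the uniqueness clause of Theorem \ref{thm:RDEofStieltjes}, $s^\ast \overset{d}{=} s_{\varnothing,\infty}$, completing the argument.

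The main obstacle is the tail control in the passage to the limit: the intensity measures $m_n$ may place infinite mass near $0$, and the summand $y^2 s/(sy-1)$ is well-behaved only away from $y=0$ (in the small-$y$ regime) and away from resonances $sy\approx 1$. The case analysis that produces the bound $C_z(|y|+|y|^2)$ is what neutralizes the resonance issue, while \eqref{eq:convergence of sums} uniformly controls accumulation near $0$; threading these together is the delicate step. All other ingredients (tightness from the universal Stieltjes bound, identification via uniqueness, bounded convergence) are standard once the RDE can be passed to the limit.
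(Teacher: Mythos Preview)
Your approach is essentially the paper's: tightness of the random Stieltjes transforms, Skorokhod coupling of subsequential limits together with the Poisson point processes, passing to the limit inside the RDE using the uniform tail condition \eqref{eq:convergence of sums}, and identification via the uniqueness clause of Theorem~\ref{thm:RDEofStieltjes}. The paper works with the RDE in the form \eqref{eq:RDEofStieltjes} and packages the tail control as $\ell^1$ convergence of the ordered Poisson points rather than your explicit case analysis on the summand of \eqref{eq:New RDE}, but this is a cosmetic difference; your bound $\bigl|y^{2}s/(sy-1)\bigr|\le C_z(|y|+|y|^{2})$ is a clean way to justify the interchange.

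One point needs repair: $\mathcal{S}$ is not compact in the local-uniform topology---its closure contains Stieltjes transforms of \emph{sub}-probability measures (mass can escape to infinity under vague limits). Hence your subsequential limit $s^\ast$ lies a priori only in $\overline{\mathcal{S}}$, and the uniqueness in Theorem~\ref{thm:RDEofStieltjes} is stated only for laws on $\mathcal{S}$. The paper closes this gap by checking $\lim_{t\to\infty} it\, s^\ast(it)=-1$ directly from the RDE: once $s^\ast$ satisfies \eqref{eq:RDEofStieltjes}, for $z=it$ with $t$ large one has $|s_j(it)y_j|\leq |y_j|/t\leq 1/2$ for all $j$ (since $\sup_j|y_j|<\infty$ a.s.), so the sum is dominated by $2\sum_j|y_j|$ and $it\,s^\ast(it)\to -1$. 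With this verification added, your argument goes through.
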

\begin{proof}
	Let $s_n$ be the Stieltjes transforms of $\mu_{m_n}$. Let $\{\mu_{m_{n_k}}\}_{n_k}$ be a subsequence of $\{\mu_{n}\}_{n}$, and let $r_{n_k}(z)$ be the random Stieltjes transforms solving RDE \eqref{eq:RDEofStieltjes} for the measures $m_{n_k}$. From Lemma \ref{tightness of random analytic functions} it follows that $\{r_{n_k}\}$ is tight in the space of analytic function on $\CC_+$ with the topology of uniform convergence on compact subsets, and we pass to a further subsequence $n_k'$ converging to another random analytic function $r(z)$. As $\{r_{n_k}\}$ is almost surely uniformly bounded on compact subsets is follows that $r$ is almost surely bounded on compact subsets. For any fixed $z\in\CC_+$, it follows by the dominated convergence theorem that \begin{equation}\label{eq:A:subseq convergence to r}
	\lim_{n_k'\rightarrow\infty}s_{n_k'}(z)=\lim_{n_k'\rightarrow\infty}\EE r_{n_k'}(z)=\EE r(z).
\end{equation} Corollary \ref{cor:continuity in m} then follows if $r$ is a random Stieltjes transform solution to RDE \eqref{eq:RDEofStieltjes} corresponding to $m_\infty$. 

To this end, let $\{\Pi_n\}_{n=1}^\infty$ be Poisson random measures with intensity measures $m_n$. For a positive function $f\in C_K(\bar{\RR}\setminus\{0\})$, $1-e^{-f(x)}$ is also a continuous function with compact support. Thus\begin{equation}
	\lim_{n\rightarrow\infty}\exp\left(\int_\RR 1-e^{-f(x)}dm_n(x) \right)=\exp\left(\int_\RR 1-e^{-f(x)}dm_\infty(x) \right).
\end{equation} It follows from Theorems 5.1 and 5.2 in \cite{HeavyPhenom} that $\Pi_n$ converges in distribution to $\Pi_\infty$. For $n\in\NN$ let $\{y_j^{(n)}\}_{j\geq 1}$ be the points of the process $\Pi_n$ and $\{y_j\}_{j\geq 1}$ the points of the process $\Pi_\infty$. The points may be ordered such that for every $j\in\NN$, $y_j^{(n)}$ converges in distribution to $y_j$ (see Section 2 of \cite{EmpPP} for more details). In fact, from \eqref{eq:convergence of sums} and Lemma 1 of \cite{EmpPP} $\{y_j^{(n)}\}_{j\geq 1}$ converges in distribution to $\{y_j\}_{j\geq 1}$ in $\ell^1(\NN)$.
Using Skorokhod's representation theorem we may put $\{r_{n_k'}\}$, $\{\Pi_{n_k'}\}$, $\Pi_\infty$ and $r$ on a single probability space such that all the above convergences in distributions are almost sure, and \begin{equation}\label{eq:l1 convergence}
	\lim\limits_{n\rightarrow\infty}\sum_{j=1}^\infty|y_j^{(n)}-y_j|=0
\end{equation} almost surely. For fixed $z\in\CC_+$, \begin{align}\label{eq:rconvergence}
r(z)&=\lim_{n_k'\rightarrow\infty}r_{n_k'}(z)\nonumber\\
	&=\lim_{n_k'\rightarrow\infty}-\left(z-\sum_{j=1}^\infty\frac{y_j^{(n_k')}}{r_{n_k'}^{(j)}(z)y_j^{(n_k')}-1} \right)^{-1}\nonumber\\
	&=-\left(z-\lim_{n_k'\rightarrow\infty}\sum_{j=1}^\infty\frac{y_j^{(n_k')}}{r_{n_k'}^{(j)}(z)y_j^{(n_k')}-1} \right)^{-1}\nonumber\\
	&=-\left(z-\sum_{j=1}^\infty\frac{y_j}{r_j(z)y_j-1} \right)^{-1},
\end{align} for independent copies $r_1,r_2,\dots$ of $r$, where the last equality follows from \eqref{eq:l1 convergence}. Thus $r$ is an analytic solution to RDE \eqref{eq:RDEofStieltjes}. From \eqref{eq:rconvergence} and the almost sure boundedness of $r$ on compact subsets of $\CC_+$ that almost surely\begin{equation}
\lim\limits_{t\rightarrow\infty}itr(it)=-1,
\end{equation} and thus $r$ is almost surely the Stieltjes transform of a probability measure. From \eqref{eq:A:subseq convergence to r} and the uniqueness of the solution to RDE \eqref{eq:RDEofStieltjes} it follows that for any $z\in\CC_+$ \begin{equation}
\lim_{n_k'\rightarrow\infty}s_{n_k'}(z)=s_{\infty}(z).
\end{equation} As the subsequence $n_k$ was arbitrary is follows that $s_n$ converge pointwise to $s_\infty$ and $\mu_{m_n}$ converges weakly to $\mu_{m_\infty}$ as $n\rightarrow\infty$.
\end{proof}

Theorem \ref{thm:RecoveringFreeConvolution} below considers the $\lambda\rightarrow\infty$ limit of example (iii) in Remark \ref{remark:examples}. The limiting measure is the same limiting measure found in Theorem \ref{thm:BDJ Limit}. The works of Jiang \cite{Jiang2012} and Chatterjee and Hazra \cite{Chatterjee2021SpectralPF} established Theorem \ref{thm:BDJ Limit} for sparse random matrices where the expected number of nonzero entries in a row tends to infinity with the size of the matrix. Theorem \ref{thm:RecoveringFreeConvolution}, when combined with Theorem \ref{thm:ESMConv} and Remark \ref{remark:examples} (iii), can then be interpreted as splitting the limit to where first $n\rightarrow\infty$ and then the expected number of nonzero entries tends to infinity.
\begin{theorem}\label{thm:RecoveringFreeConvolution}
	Let $G_\lambda$ denote the Gaussian probability measure with mean $0$ and variance $\frac{1}{\lambda}$, and let $m_\lambda=\lambda G_\lambda$. If $\mu_{m_\lambda}$ is the deterministic limiting probability measure from Theorem \ref{thm:ESMConv}, then $\mu_{m_\lambda}$ converges weakly to  the free convolution of the semicircle distribution and the standard real Gaussian distribution, as $\lambda\rightarrow\infty$.
\end{theorem}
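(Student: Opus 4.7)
Let $m_\lambda = \lambda G_\lambda$, so that the Poisson point process with intensity $m_\lambda$ consists of $N^{(\lambda)}\sim\mathrm{Poisson}(\lambda)$ atoms that are i.i.d.\ $\mathcal{N}(0,1/\lambda)$; denote them by $y_1^{(\lambda)},y_2^{(\lambda)},\ldots$ (with $y_j^{(\lambda)}=0$ for $j>N^{(\lambda)}$). Write $r^{(\lambda)}$ for the random Stieltjes transform furnished by Theorem \ref{thm:RDEofStieltjes} for $m_\lambda$, so that $f^{(\lambda)}(z):=\EE r^{(\lambda)}(z)$ is the Stieltjes transform of $\mu_{m_\lambda}$. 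My plan is to pass to the $\lambda\to\infty$ limit \emph{inside} the RDE \eqref{eq:RDEofStieltjes}: because each atom $y_j^{(\lambda)}$ is of order $\lambda^{-1/2}$ while there are order $\lambda$ of them, $\sum_j y_j^{(\lambda)}\Rightarrow\mathcal{N}(0,1)$ and $\sum_j (y_j^{(\lambda)})^2\to 1$, which should collapse the RDE to a subordination equation that characterizes $\sigma\boxplus\gamma$, where $\sigma$ is the standard semicircle and $\gamma$ is the standard real Gaussian.

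First I would establish tightness: since $|r^{(\lambda)}(z)|\leq 1/\mathrm{Im}(z)$ for every Stieltjes transform, the family $\{r^{(\lambda)}\}$ is tight as random analytic functions on $\CC_+$ (uniform on compacta) via the same argument as Lemma \ref{tightness of random analytic functions} used in Corollary \ref{cor:continuity in m}. Extract a subsequence along which $r^{(\lambda_k)}\Rightarrow r^\infty$ and, by Skorokhod, realize the convergence almost surely on a single space together with the point processes and independent copies $\{r_j^{(\lambda_k)}\}$ appearing in the RDE. Because $\max_j|y_j^{(\lambda_k)}|=O(\lambda_k^{-1/2}\log\lambda_k)$ with high probability, the geometric expansion
\begin{equation*}
\frac{y_j^{(\lambda_k)}}{r_j^{(\lambda_k)}(z)\,y_j^{(\lambda_k)}-1}=-y_j^{(\lambda_k)}-r_j^{(\lambda_k)}(z)(y_j^{(\lambda_k)})^2-\sum_{\ell\geq 2}r_j^{(\lambda_k)}(z)^\ell (y_j^{(\lambda_k)})^{\ell+1}
\end{equation*}
is valid termwise, and summing in $j$ I would analyze the three contributions separately: (a) $\sum_j y_j^{(\lambda_k)}\Rightarrow G\sim\mathcal{N}(0,1)$ by the (classical) compound-Poisson CLT; (b) $\sum_j r_j^{(\lambda_k)}(z)(y_j^{(\lambda_k)})^2 \to f^\infty(z)$ in probability, via a conditional LLN using that $\EE\sum_j(y_j^{(\lambda_k)})^2=1$ with vanishing variance, $\sum_j(y_j^{(\lambda_k)})^4\to 0$, the conditional mean equals $f^{(\lambda_k)}(z)\sum_j(y_j^{(\lambda_k)})^2\to f^\infty(z)$, and $\EE r^{(\lambda_k)}(z)\to\EE r^\infty(z)$ by bounded convergence; (c) the remaining higher-order tail is dominated by $(\mathrm{Im}\,z)^{-2}\sum_j|y_j^{(\lambda_k)}|^3\big(1-|y_j^{(\lambda_k)}|/\mathrm{Im}\,z\big)^{-1}$, which vanishes since $\sum_j|y_j^{(\lambda_k)}|^3\to 0$. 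The main obstacle is the weighted LLN in (b): both the weights $(y_j^{(\lambda)})^2$ and the summands $r_j^{(\lambda)}$ depend on $\lambda$, so it requires a careful two-scale argument exploiting the uniform bound on Stieltjes transforms and the fourth-moment estimate on the atoms.

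Combining these limits in the RDE gives, almost surely and for every fixed $z\in\CC_+$,
\begin{equation*}
r^\infty(z)=-\bigl(z+G+f^\infty(z)\bigr)^{-1},
\end{equation*}
with $G\sim\mathcal{N}(0,1)$ independent of everything else and $f^\infty(z):=\EE r^\infty(z)$ deterministic. Taking expectation over $G$ and using the symmetry $G\overset{d}{=}-G$,
\begin{equation*}
f^\infty(z)=\EE_G\!\left[\frac{1}{G-(z+f^\infty(z))}\right]=s_\gamma\bigl(z+f^\infty(z)\bigr).
\end{equation*}
This is exactly the subordination equation characterizing the Stieltjes transform of $\gamma\boxplus\sigma$ (the $R$-transform of the standard semicircle is $R_\sigma(w)=w$, so $s_{\gamma\boxplus\sigma}(z)=s_\gamma(z+s_{\gamma\boxplus\sigma}(z))$), and this fixed-point equation has a unique solution in the class of Stieltjes transforms of probability measures by the standard Denjoy–Wolff/contraction argument on $\CC_+$. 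Hence every subsequential limit satisfies $f^\infty=s_{\gamma\boxplus\sigma}$, so $s_{\mu_{m_\lambda}}(z)\to s_{\gamma\boxplus\sigma}(z)$ pointwise on $\CC_+$ and $\mu_{m_\lambda}\Rightarrow\gamma\boxplus\sigma$ as $\lambda\to\infty$.
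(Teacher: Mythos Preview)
Your proposal is correct and follows essentially the same route as the paper: tightness of the random Stieltjes transforms, passage to a subsequential limit, a three-term decomposition of the RDE (the linear part $\sum_j y_j^{(\lambda)}\Rightarrow\mathcal N(0,1)$, the quadratic part $\sum_j r_j(z)(y_j^{(\lambda)})^2\to f^\infty(z)$ via a weak-LLN argument, and vanishing higher-order remainder), and identification of the limit with the subordination equation $f^\infty(z)=s_\gamma(z+f^\infty(z))$ for $\gamma\boxplus\sigma$. The only imprecision is that the displayed identity $r^\infty(z)=-(z+G+f^\infty(z))^{-1}$ should be stated as an equality \emph{in distribution} rather than ``almost surely'' (the RDE is distributional, and Skorokhod does not couple the left and right sides pointwise), but since you immediately take expectations this does not affect the conclusion.
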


\begin{proof}
	Denote the free convolution of a standard semicircle measure and standard Gaussian measure by  $\text{SC}\boxplus G_1$. It is known \cite{Biane97} the Stieltjes transform, $s_{\text{fc}}$, of $\text{SC}\boxplus G_1$ can be defined as the unique solution to \begin{equation}
		s_{\text{fc}}(z)=\int_\RR\frac{1}{x-z-s_{\text{fc}}(z)}\frac{1}{\sqrt{2\pi}}e^{-x^2/2}dx
	\end{equation} satisfying $\Im(s_{\text{fc}}(z))\geq 0$ and $s_{\text{fc}}(z)\sim-z^{-1}$ as $z\rightarrow\infty$. If $s_\lambda$ is the Stieltjes transform of $\mu_{m_\lambda}$, then from Theorem \ref{thm:RDEofStieltjes} we know $s_\lambda(z)=\EE r_\lambda(z)$ where $r_\lambda$ satisfies the RDE \begin{align}
	r_\lambda(z)\overset{d}{=}-\left(z-\sum_{j=1}^N\frac{y_j}{r_j(z)y_j-1} \right)^{-1},
\end{align} $N\sim\text{Pois}(\lambda)$, $\{y_j\}_{j=1}^\infty$ are i.i.d. Gaussian random variables with mean zero and variance $\frac{1}{\lambda}$ and $\{r_j\}_{j=1}^\infty$ are i.i.d. copies of $r_\lambda$, independent of the collection $\{y_j\}_{j=1}^\infty$. We will instead use the equivalent recursive distributional equation \begin{equation}
r_\lambda(z)\overset{d}{=}-\left(z+\frac{1}{\sqrt{\lambda}}\sum_{j=1}^Ny_j+\frac{1}{\lambda}\sum_{j=1}^N\frac{r_j(z)y_j^2}{1-r_j(z)y_j/\sqrt{\lambda}} \right)^{-1},
\end{equation} where $\{y_j\}_{j=1}^\infty$ are i.i.d. standard real Gaussian random variables. Fix $z\in\CC_+$. We first consider the sum $S_\lambda=\frac{1}{\sqrt{\lambda}}\sum_{j=1}^Ny_j$. For $t\in\RR$, define \begin{align*}
\pp_{S_\lambda}(t)&:=\EE\exp(itS_\lambda)\\
&=\sum_{k=0}^{\infty}\left(e^{-\frac{t^2}{2\lambda} }\right)^k\frac{\lambda^ke^{-\lambda}}{k!}\\
&=\exp\left({-\lambda}\right)\exp\left({\lambda e^{-\frac{t^2}{2\lambda}}}\right)\\
&=\exp\left(-\lambda+\lambda-\frac{t^2}{2}+o\left(\frac{1}{\lambda}\right)\right),
\end{align*} where here and throughout the proof asymptotic notation is as $\lambda\rightarrow\infty$. Thus $S_\lambda$ converges to a standard real Gaussian random variable as $\lambda\rightarrow\infty$.

We will compare the sum $\frac{1}{\lambda}\sum_{j=1}^N\frac{r_j(z)y_j^2}{1-r_j(z)y_j/\sqrt{\lambda}}$ to increasingly simpler sums. The first comparison is to the sum $\frac{1}{\lambda}\sum_{j=1}^Nr_{j}(z)y_j^2$. Notice that \begin{align*}
	\left|\frac{1}{\lambda}\sum_{j=1}^N\frac{r_j(z)y_j^2}{1-r_j(z)y_j/\sqrt{\lambda}}-\frac{1}{\lambda}\sum_{j=1}^Nr_{j}(z)y_j^2 \right|&=\left|\frac{1}{\lambda^{3/2}}\sum_{j=1}^N\frac{r_j(z)^2y_j^3}{1-r_j(z)y_j/\sqrt{\lambda}} \right|\\
	&\leq\frac{1}{\Im(z)^2\lambda^{3/2}}\sum_{j=1}^N\frac{|y_j|^3}{|1-r_j(z)y_j/\sqrt{\lambda}|}\\
	&\leq\frac{2}{\Im(z)^2\lambda^{3/2}}\sum_{j=1}^N|y_j|^3\\
	&\quad+\frac{1}{\Im(z)^2\lambda^{3/2}}\sum_{j=1}^N\frac{|y_j|^3}{|1-r_j(z)y_j/\sqrt{\lambda}|}\oindicator{A_{j,\lambda}},
\end{align*} where $\oindicator{A_{j,\lambda}}$ is the indicator of the event $A_{j,\lambda}=\{|y_j|\geq\sqrt{\lambda}\Im(z)/2\}$. We will now show both pieces of this bound converge in probability to zero. From Lemma \ref{lemma:CampbellsFormula} \begin{align*}
\lim\limits_{\lambda\rightarrow\infty}\EE\frac{1}{\lambda^{3/2}}\sum_{j=1}^N|y_j|^3=\lim\limits_{\lambda\rightarrow\infty}\frac{\EE|y_1|^3}{\sqrt{\lambda}}=0.
\end{align*} From standard tail estimates of Gaussian random variables we have that \begin{align*}
\lim\limits_{\lambda\rightarrow\infty}\PP\left(\sum_{j=1}^N\oindicator{A_{j,\lambda}}\neq 0 \right)&=\lim\limits_{\lambda\rightarrow\infty}\sum_{k=0}^\infty\PP\left(\sum_{j=1}^k\oindicator{A_{j,\lambda}}\neq 0 \right)e^{-\lambda}\frac{\lambda^k}{k!}\\
&\leq \lim\limits_{\lambda\rightarrow\infty}\sum_{k=0}^\infty k\PP\left(|y_1|\geq\sqrt{\lambda}\Im(z)/2\right)e^{-\lambda}\frac{\lambda^k}{k!}\\
&\leq \lim\limits_{\lambda\rightarrow\infty}\sum_{k=0}^\infty k\frac{2}{\sqrt{2\pi\lambda}\Im(z)}e^{-\lambda\Im(z)^2/8} e^{-\lambda}\frac{\lambda^k}{k!}\\
&\leq\lim\limits_{\lambda\rightarrow\infty}Ce^{-c\lambda}\sqrt{\lambda},
\end{align*} for some positive constants $C,c>0$ independent of $\lambda$. Thus $\frac{1}{\lambda}\sum_{j=1}^N\frac{r_j(z)y_j^2}{1-r_j(z)y_j/\sqrt{\lambda}}-\frac{1}{\lambda}\sum_{j=1}^Nr_{j}(z)y_j^2\Rightarrow0$ as $\lambda\rightarrow\infty$. Next we compare to the sum $\frac{1}{\lambda}\sum_{j=1}^N(\EE r_{\lambda}(z))y_j^2$. To this end let $Z_j=r_j(z)y_j^2-(\EE r_\lambda(z))y_j^2$, and consider the Taylor expansion of characteristic function of the real part of $\frac{1}{\lambda}\sum_{j=1}^NZ_j$ \begin{align*}
\lim\limits_{\lambda\rightarrow\infty}\EE\exp\left(it\frac{1}{\lambda}\sum_{j=1}^N\Re(Z_j)\right)&=\lim\limits_{\lambda\rightarrow\infty}\sum_{k=0}^\infty\left[\EE\exp\left(it\frac{1}{\lambda}\Re(Z_1)\right)\right]^ke^{-\lambda}\frac{\lambda^k}{k!}\\
&=\lim\limits_{\lambda\rightarrow\infty}\exp\left(-\lambda+\lambda+it\EE \Re(Z_1)+O(1/\lambda) \right)\\
&=1.
\end{align*} An identical argument follows from the imaginary part, and we see that $\frac{1}{\lambda}\sum_{j=1}^NZ_j$ converges in probability to zero. It is also straightforward to show $\frac{1}{\lambda}\sum_{j=1}^Ny_j^2\Rightarrow1$, and thus $\frac{1}{\lambda}\sum_{j=1}^N\EE( r_{\lambda}(z))y_j^2-\EE r_{\lambda}(z)$ converges in probability to zero. These three comparisons lead to \begin{align*}
\frac{1}{\lambda}\sum_{j=1}^N\frac{r_j(z)y_j^2}{1-r_j(z)y_j/\sqrt{\lambda}}-\EE r_\lambda(z)&=\frac{1}{\lambda}\sum_{j=1}^N\frac{r_j(z)y_j^2}{1-r_j(z)y_j/\sqrt{\lambda}}-\frac{1}{\lambda}\sum_{j=1}^Nr_{j}(z)y_j^2\\
			&\quad+\frac{1}{\lambda}\sum_{j=1}^Nr_{j}(z)y_j^2-\frac{1}{\lambda}\sum_{j=1}^N\EE(r_{\lambda}(z))y_j^2\\
			&\quad+\EE(r_{\lambda}(z))\left(\frac{1}{\lambda}\sum_{j=1}^Ny_j^2-1\right),
\end{align*} which converges in distribution to $0$. Since this limit is a constant, we may conclude that jointly\begin{equation}
\left(\frac{1}{\sqrt{\lambda}}\sum_{j=1}^Ny_j,\frac{1}{\lambda}\sum_{j=1}^N\frac{r_j(z)y_j^2}{1-r_j(z)y_j/\sqrt{\lambda}}-\EE r_\lambda(z)\right)\Rightarrow (Y,0),
\end{equation} where $Y$ is a standard Gaussian random variable.

Let $\{\lambda_n\}_{n=1}^\infty$ be an arbitrary increasing sequence of positive real numbers going to infinity and let $\{\lambda_{n_k}\}$ be an arbitrary subsequence. From Lemma \ref{tightness of random analytic functions} $\{r_{\lambda_{n_k}}\}_{n_k}$ is tight as a family of random analytic functions on $\CC_+$ with the topology of uniform convergence on compact subsets, and thus there exists a further subsequence $\lambda_{n_{k'}}$ such that $ r_{\lambda_{n_{k'}}}(z)\rightarrow \tilde{r}(z)$ for some  random analytic function $\tilde{r}$. Fix $z\in\CC_+$, it follows from the dominated convergence theorem that $\EE r_{\lambda_{n_{k'}}}(z)\rightarrow \EE\tilde{r}(z)=:r(z)$ for some deterministic limit $r(z)$. As $z\in\CC_+$ was arbitrary, it follows from the above convergence in distribution and the continuous mapping theorem that \begin{align*}
	r(z)&=\lim\limits_{n_{k'}\rightarrow\infty}\EE r_{\lambda_{n_{k'}}}(z)\\
		&=-\lim\limits_{n_{k'}\rightarrow\infty}\EE\left(z+\frac{1}{\sqrt{\lambda_{n_{k'}}}}\sum_{j=1}^Ny_j+\frac{1}{\lambda_{n_{k'}}}\sum_{j=1}^N\frac{r_{\lambda_{n_{k'}}}(z)y_j^2}{1-r_{\lambda_{n_{k'}}}(z)y_j/\sqrt{\lambda_{n_{k'}}}} \right)^{-1}\\
		&=\EE\frac{-1}{z+Y+r(z)}\\
		&=\int_\RR\frac{1}{x-z-r(z)}\frac{1}{\sqrt{2\pi}}e^{-x^2/2}dx,
\end{align*} pointwise on $\CC_+$. Thus $r(z)=s_{\text{fc}}(z)$ along every one of these further subsequences of $\{\lambda_{n_{k}}\}$, and $s_\lambda(z)=\EE r_\lambda(z)\rightarrow s_{\text{fc}}(z)$. By Lemma \ref{lemma:StieltjestoVague} this pointwise convergence of the Stieltjes transforms implies $\mu_{m_\lambda}$ converges weakly to $\text{SC}\boxplus G_1$ as $\lambda\rightarrow\infty$.
\end{proof}

The matrix $X_n$ in Remark \ref{remark:examples} (iii) has Gaussian entries, and for convenience we stated Theorem \ref{thm:RecoveringFreeConvolution} for the corresponding measure $\lambda G_\lambda$. However, the proof can be adapted in a straightforward way to the analogous measures corresponding to $X_n$ from Remark \ref{remark:examples} (iii) having entries with mean zero, variance $\frac{1}{\lambda}$, and three finite moments.

\section*{Acknowledgment}
The first author thanks Yizhe Zhu for pointing out reference \cite{Shirai}.

\section{Operators on $\ell^2(V)$}\label{sec:Op} Let $V$ be a countable set and let $\ell^2(V)$ denote the Hilbert space defined by the inner product 
$$\langle\phi,\psi\rangle:=\sum_{u\in V}\bar{\phi}_u\psi_u,\quad \phi_u=\langle\delta_u,\phi\rangle,$$
where $\delta_u$ is the unit vector supported on $u\in V$. Let $\mathcal{D}(V)$ denote the dense subset of $\ell^2(V)$ of vectors with finite support. Let $(w_{uv})_{u,v\in V}$ be a collection of real numbers with $w_{uv}=w_{vu}$ such that for all $u\in V$,
$$\sum_{v\in V}|w_{uv}|^2<\infty.$$
We then define a symmetric linear operator $A$ with domain $\mathcal{D}(V)$ by \begin{equation}\label{eq:OP}
\langle \delta_u,A\delta_v\rangle=\langle \delta_v,A\delta_u\rangle=w_{uv}.
\end{equation}

\begin{definition}[Local Convergence]
	Suppose $(A_n)$ is a sequence of bounded operators on $\ell^2(V)$ and $A$ is a linear operator on $\ell^2(V)$ with domain $\mathcal{D}(A)\supset\mathcal{D}(V)$. For any $u,v\in V$ we say that $(A_n,u)$ converges locally to $(A,v)$, and write 
	$$(A_n,u)\rightarrow(A,v),$$
	if there exists a sequence of bijections $\sigma_n:V\rightarrow V$ such that $\sigma_n(v)=u$ and, for all $\phi\in\mathcal{D}(V)$,
	$$\sigma_n^{-1}A_n\sigma_n\phi\rightarrow A\phi,$$
	in $\ell^2(V)$, as $n\rightarrow\infty$. 
\end{definition} 

Here we use $\sigma_n$ for the bijection on $V$ and the corresponding linear isometry defined in the obvious way. This notion of convergence is useful to random matrices for two reasons. First, we will make a choice on how to define the action of an $n\times n$ matrix on $\ell^2(V)$, and the bijections $\sigma_n$ help ensure the choice of location for the support of the matrix does not matter. Second, local convergence also gives convergence of the resolvent operator at the distinguished points $u,v\in V$. This comes down to the fact that local convergence is strong operator convergence, up to the isometries. See \cite{HeavyIId} for details.

\begin{theorem}[Theorem 2.2 in \cite{SymHeavy}]\label{Thm:ResfromLC}
	If $(A_n)_{n=1}^\infty$ and $A$ are self-adjoint operators such that $(A_n,u)$ converges locally to $(A,v)$ for some $u,v\in V$, then, for all $z\in\CC_+$,\begin{equation}
	\langle\delta_u,(A_n-z)^{-1}\delta_u\rangle\rightarrow\langle\delta_v,(A-z)^{-1}\delta_v\rangle
	\end{equation} as $n\rightarrow\infty$.
\end{theorem}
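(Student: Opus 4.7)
The plan is to reduce the statement to a weak convergence assertion on a single Hilbert space. Using the bijections $\sigma_n$ from the definition of local convergence, set $\tilde A_n := \sigma_n^{-1} A_n \sigma_n$, which are bounded self-adjoint operators on $\ell^2(V)$ satisfying $\tilde A_n\phi \to A\phi$ for every $\phi \in \mathcal{D}(V)$. Since $\sigma_n$ is unitary with $\sigma_n\delta_v = \delta_u$, the quantity of interest transforms as
\[
\langle \delta_u, (A_n - z)^{-1}\delta_u\rangle = \langle \delta_v, (\tilde A_n - z)^{-1}\delta_v\rangle,
\]
so it suffices to prove that $\eta_n := (\tilde A_n - z)^{-1}\delta_v$ converges weakly in $\ell^2(V)$ to $\eta := (A - z)^{-1}\delta_v$, since pairing with $\delta_v$ then gives the claim.

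The vectors $\eta_n$ are uniformly bounded by $1/\Im(z)$ and hence weakly precompact. The key algebraic identity I would use is that, for any $\phi \in \mathcal{D}(V)$, self-adjointness of $\tilde A_n$ gives
\[
\langle \eta_n, (\tilde A_n - \bar z)\phi\rangle = \langle (\tilde A_n - z)\eta_n, \phi\rangle = \langle\delta_v, \phi\rangle = \phi_v.
\]
If $\eta_\infty$ is any weak subsequential limit of $(\eta_n)$, combining this with the norm convergence $(\tilde A_n - \bar z)\phi \to (A - \bar z)\phi$ (which follows from the local convergence hypothesis) yields
\[
\langle \eta_\infty, (A - \bar z)\phi\rangle = \phi_v \quad\text{for all } \phi \in \mathcal{D}(V).
\]

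To identify $\eta_\infty$ with $\eta$, I would extend this identity to all $\phi \in \mathcal{D}(A)$ by graph-norm approximation, and then use the fact that self-adjointness of $A$ makes $A - \bar z$ a bijection from $\mathcal{D}(A)$ onto $\ell^2(V)$. Writing an arbitrary $\psi\in\ell^2(V)$ as $\psi = (A - \bar z)\phi$ and using the adjoint relation $(A - \bar z)^{-1} = ((A - z)^{-1})^*$, one obtains
\[
\langle \eta_\infty, \psi\rangle = \phi_v = \langle\delta_v, (A - \bar z)^{-1}\psi\rangle = \langle (A - z)^{-1}\delta_v, \psi\rangle = \langle \eta, \psi\rangle,
\]
so $\eta_\infty = \eta$. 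Since every weak subsequential limit equals $\eta$, the full sequence $(\eta_n)$ converges weakly to $\eta$, which suffices.

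The main obstacle is the graph-norm extension from $\mathcal{D}(V)$ to $\mathcal{D}(A)$: it implicitly requires that $\mathcal{D}(V)$ be a core for $A$. This is a property of the limiting operator rather than of the local convergence itself, and in the applications of the theorem it must be verified alongside self-adjointness of $A$. In the setting of this paper, where $A$ will be constructed as (an extension of) the closure of its action on $\mathcal{D}(V)$, this core property is built into the construction.
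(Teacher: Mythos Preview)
The paper does not supply its own proof of this statement; it is quoted as Theorem~2.2 of \cite{SymHeavy}, with only the remark that local convergence amounts to strong operator convergence up to the isometries $\sigma_n$ and a pointer to \cite{HeavyIId} for details. Your weak-compactness argument is correct and is precisely the standard route from strong convergence on a core to strong resolvent convergence (cf.\ Reed--Simon, Theorem~VIII.25(a)); your observation that the argument needs $\mathcal{D}(V)$ to be a core for $A$ is exactly right, and in this paper that is secured by Proposition~\ref{prop:MSelfadjoint}, where the limiting operator is obtained as the closure of an essentially self-adjoint operator on $\mathcal{D}(\NN^f)$.
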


To apply this to random operators we say that $(A_n,u)\rightarrow(A,v)$ in distribution if there exists a sequence of random bijections $\sigma_n$ such that $\sigma_n^{-1}A_n\sigma_n\phi\rightarrow A\phi$ in distribution for every $\phi\in\mathcal{D}(V)$. 

\section{Poisson weighted infinite tree}\label{sec:PWIT}  Let $\rho$ be a positive Radon measure on $\RR\setminus\{0\}$. $\mathbf{PWIT}(\rho)$ is the random infinite weighted rooted tree defined as follows. The vertex set of the tree is identified with $\NN^f:=\bigcup_{k\in\NN\cup\{0\}}\NN^k$ by indexing the root as $\NN^0=\varnothing$, the offspring of the root as $\NN$ and, more generally, the offspring of some $v\in\NN^k$ as $(v1),(v2),\cdots\in\NN^{k+1}$. Define $T$ as the tree on $\NN^f$ with edges between parents and offspring. Let $\{\Xi_v\}_{v\in\NN^f}$ be independent realizations of a Poisson point process with intensity measure $\rho$. Let $\Xi_\varnothing=\{y_1,y_2,\dots\}$ be ordered such that $|y_1|\geq|y_2|\geq\cdots$ with the convention $y_i=0$ for all $i$ large enough\footnote{If $\rho(\RR\setminus\{0\})<\infty$ then the number of points in $\RR\setminus\{0\}$ is a Poisson random variable. By large enough we mean larger than this random variable.} if $\rho(\RR\setminus\{0\})<\infty$, and assign the weight $y_i$ to the edge between $\varnothing$ and $i$, assuming such an ordering is possible. More generally assign the weight $y_{vi}$ to the edge between $v$ and $vi$ where $\Xi_v=\{y_{v1},y_{v2},\dots \}$ and $|y_{v1}|\geq|y_{v2}|\geq\cdots,$ again with the convention $y_{vi}=0$ for all $i$ larger than $\Xi_v(\RR\setminus\{0\})$ if $\rho(\RR\setminus\{0\})<\infty$.

For a measure $m$ on $\RR\setminus\{0\}$ satisfying \eqref{eq:squaresumcond} and a realization of $\mathbf{PWIT}(m)$ define the linear operator $A$ on $\mathcal{D}(\NN^f)$ by the formulas \begin{equation}\label{eq:PWITLimitOp}
\langle\delta_v,A\delta_{vk}\rangle=\langle\delta_{vk},A\delta_v\rangle=y_{vk}
\end{equation}
and $\langle\delta_v,A\delta_u\rangle=0$ otherwise. From (\ref{eq:squaresumcond}) one can see that the points in $\Xi_v$ are almost surely square summable for every $v\in\NN^f$, and thus $A$ is a well defined linear operator on $\mathcal{D}(\NN^f)$, though is possibly unbounded on $\ell^2(\NN^f)$.

\subsection{Poisson weighted infinite tree with loops} The Poisson weighted infinite tree has been utilized in \cite{HeavyIId,HeavyMarkovOriented,SymHeavy,Jung16,campbell2020spectrum} to study the empirical spectral distribution of heavy-tailed random matrices by showing the random matrices converge to the operator defined by (\ref{eq:PWITLimitOp}) for an appropriate measure $m$. One key feature of those matrices is the diagonal elements are negligible when compared to the largest entries in a row or column. This will not be the case for the Laplacian matrix $L_n$, thus we will need to define an operator on a slightly modified graph.

Let $m$ be a measure on $\RR\setminus\{0\}$ such that \begin{equation}\label{eq:sumcond}
\int_{\RR\setminus\{0\}}|x|\wedge 1 dm(x)<\infty.
\end{equation} Define the Poisson weighted infinite tree with loops $\mathbf{PWITL}(m)$ as the random weighted graph with vertex set $\NN^f$ and edge set $E\cup\bigcup_{v\in\NN^f}\{v,v\}$ where $E$ is the edge set of $\mathbf{PWIT}(m)$. The weights on edges in $E$ of $\mathbf{PWITL}(m)$ are the weights on edges in $E$ of $\mathbf{PWIT}(m)$ while the weight on a loop $\{v,v\}$ is\begin{equation}\label{eq:diagweight}
y_{vv}:=-y_{ul}-\sum_{k=1}^\infty y_{vk},
\end{equation} where $ul=v$ if $v$ is not $\varnothing$ and the weight on $\{\varnothing,\varnothing\}$ is \begin{equation}
y_{\varnothing\varnothing}:=-\sum_{k =1}^\infty y_k.
\end{equation} (\ref{eq:sumcond}) is enough to guarantee $y_{vv}$ is a well-defined random variable, see Lemma \ref{lemma:CampbellsFormula}. Define the operator $L$ by \begin{equation}\label{eq:PWITLLimitOp}
\langle\delta_v,L\delta_{vk}\rangle=\langle\delta_{vk},L\delta_v\rangle=y_{vk}\quad\text{ and }\quad \langle\delta_v,L\delta_v\rangle=y_{vv},
\end{equation} and $\langle\delta_v,L\delta_u\rangle=0$ otherwise. In which case we say $L$ is the \emph{operator associated} to $\mathbf{PWITL}(m)$.

We will show the sequence $\{(L_n,1)\}_{n\geq 1}$ converges locally in distribution to $(L,\varnothing)$ where $L$ is the linear operator on $\ell^2(\NN^f)$ associated to the $\mathbf{PWITL}(m)$. 

\subsection{Self-adjointness} In this section we review and apply a criteria established by Bordenave, Caputo, and Chafa\"i in \cite{SymHeavy} for unbounded operators to be essentially self-adjoint. There are two minor issues which prevent immediately applying their results to the operator $L$ associated to $\mathbf{PWITL}(m)$. First is they consider operators with skeletons which are trees, and not trees with loops. This is easy to overcome. The second obstacle is in the application of the criteria they consider only point processes associated to $\alpha$-stable distributions and not more general infinitely divisible distributions. This is overcome by the establishment of Lemma \ref{lemma:Tau<1}.

\begin{proposition}[Lemma A.3 in \cite{SymHeavy}]\label{prop:selfadjointcriterion}
	Let $A$ be a linear operator on $\ell^2(\NN^f)$ defined by (\ref{eq:OP}). We say $u\sim v$ if $u=v, u=vk,$ or $v=uk$ for some $k\in\NN$. Assume $w_{uv}=0$ if $u\nsim v$. Suppose there exists a constant $\kappa>0$ and sequence of finite connected subsets $S_n\subset \NN^f$, such that $S_n\subset S_{n+1}$, $\NN^f=\bigcup_{n\in\NN} S_n$, and for every $n$ and $v\in S_n$,\begin{equation}
	\sum_{u\notin S_n:u\sim v}|w_{uv}|^2\leq\kappa.
	\end{equation} Then $A$ is essentially self-adjoint. 
\end{proposition}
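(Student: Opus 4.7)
The plan is to invoke the standard deficiency-index criterion (von Neumann): a densely defined symmetric operator is essentially self-adjoint if and only if $\ker(A^* - iI) = \ker(A^* + iI) = \{0\}$. Accordingly, I fix $\phi \in \ell^2(\NN^f)$ satisfying $A^*\phi = i\phi$, i.e.\ $\langle \phi, A\psi\rangle = i\langle \phi,\psi\rangle$ for every $\psi \in \mathcal{D}(\NN^f)$, and aim to show $\phi \equiv 0$; the case $A^*\phi = -i\phi$ is identical.

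The natural test vector is $\psi_n := P_n\phi$, where $P_n$ is the orthogonal projection onto $\ell^2(S_n)$. Since $S_n$ is finite, $\psi_n \in \mathcal{D}(\NN^f)$, and substituting gives $\langle \phi, A\psi_n\rangle = i\|\psi_n\|^2$. Expanding and splitting the $u$-sum according to whether $u \in S_n$ yields an ``interior'' piece $\langle \psi_n, A\psi_n\rangle$, which is real because $A$ is symmetric on $\mathcal{D}(\NN^f)$. Taking imaginary parts therefore isolates the boundary contribution:
\[
\|\psi_n\|^2 \;=\; \Im \sum_{v \in S_n}\sum_{u \notin S_n,\, u \sim v}\bar\phi_u\, w_{uv}\, \phi_v.
\]
Two successive applications of Cauchy--Schwarz---the first to the inner $u$-sum, using the hypothesis $\sum_{u \notin S_n,\, u \sim v}|w_{uv}|^2 \leq \kappa$, and the second to the outer $v$-sum---bound the right-hand side by $\sqrt{\kappa}\,\|\psi_n\|\,B_n^{1/2}$, where
\[
B_n := \sum_{v \in S_n}\sum_{u \notin S_n,\, u \sim v}|\phi_u|^2.
\]
Dividing by $\|\psi_n\|$ gives the core estimate $\|\psi_n\|^2 \leq \kappa B_n$.

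The main step, and the place where the tree-with-loops geometry enters decisively, is controlling $B_n$. The claim is that each $u \notin S_n$ can be adjacent to at most one vertex of $S_n$: if $u$ had two distinct neighbors $v_1, v_2 \in S_n$, then the unique path from $v_1$ to $v_2$ in the underlying tree would pass through $u$, contradicting the connectedness of $S_n$. Self-loops contribute nothing to the boundary sum because $u = v$ forces $u$ and $v$ to lie on the same side of $S_n$. Hence every boundary vertex is counted at most once in $B_n$, and $B_n \leq \sum_{u \notin S_n}|\phi_u|^2$, which tends to $0$ as $n \to \infty$ because $\phi \in \ell^2(\NN^f)$ and $\bigcup_n S_n = \NN^f$. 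Combining with the previous estimate, $\|\psi_n\| \to 0$; on the other hand $\|\psi_n\| \to \|\phi\|$ by monotone convergence, so $\phi = 0$, proving essential self-adjointness. The only genuine obstacle is the multiplicity argument above; everything else is an application of the standard deficiency-index criterion together with Cauchy--Schwarz.
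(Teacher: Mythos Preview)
Your proof is correct and follows precisely the argument of Lemma~A.3 in \cite{SymHeavy} that the paper cites: the deficiency-index criterion, the test vectors $P_n\phi$, the real/boundary split, the double Cauchy--Schwarz bound $\|\psi_n\|^2\le\kappa B_n$, and finally the tree-geometry fact that each $u\notin S_n$ has at most one neighbour in the connected set $S_n$. The paper's own proof simply refers to \cite{SymHeavy} and observes that this last geometric step---exactly the one you isolate---is the only place the tree structure is used and that it survives the addition of self-loops; you have written out that referenced argument in full.
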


\begin{proof}
	Proposition \ref{prop:selfadjointcriterion} is not stated identically to Lemma A.3 in \cite{SymHeavy}, however the only added assumption is that vertices are connected to themselves, so that the graph of the skeleton of $A$ is not a tree. The step in the proof given in \cite{SymHeavy} which uses the tree structure is the fact that if $v\in S_n$, $u\sim v$, and $v'\in S_n\setminus\{v\}$ then $u\nsim v$ which is also true for a tree with loops.
\end{proof}

\begin{proposition}[Proposition A.2 in \cite{SymHeavy}]\label{prop:MSelfadjoint}
	Let $m$ be a measure on $\RR\setminus \{0\}$ satisfying \eqref{eq:squaresumcond}.
	Let $\{\Xi_v \}_{v\in\NN^f}$ be a collection of Poisson point process on $\RR\setminus\{0\}$ with intensity measure $m$. Let $\Xi_\varnothing=\{y_1,y_2,\dots\}$ be ordered such that $|y_1|\geq|y_2|\geq\cdots$, and $\Xi_v=\{y_{v1},y_{v2},\dots \}$ be ordered such that $|y_{v1}|\geq|y_{v2}|\geq\cdots$ with the convention the $y_{vk}$ or $y_k$ are eventually zero if $m(\RR\setminus\{0\})<\infty$.
	 Additionally let $\{y_{vv}\}_{v\in\NN^f}$ be a collection of real random variables. Define the symmetric linear operator $A$ on $\ell^2(\NN^f)$ by 
	 $$\langle\delta_v,A\delta_{vk}\rangle=\langle\delta_{vk},A\delta_v\rangle=y_{vk},\quad\text{ and }\quad\langle\delta_v,A\delta_v\rangle=y_{vv}, $$ and $\langle \delta_u,A\delta_v\rangle=0$ otherwise. Then, with probability 1, $A$ is essentially self-adjoint.
\end{proposition}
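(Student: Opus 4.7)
The plan is to verify the essential self-adjointness criterion of Proposition \ref{prop:selfadjointcriterion}. That criterion reduces the statement to exhibiting a constant $\kappa>0$ and a nested sequence $S_1\subset S_2\subset\cdots$ of \emph{finite} connected subsets of $\NN^f$ with $\bigcup_n S_n=\NN^f$ and, for every $n$ and every $v\in S_n$,
\[
\sum_{u\sim v,\, u\notin S_n}|w_{uv}|^2\leq \kappa,
\]
where $w_{uv}=\langle \delta_u,A\delta_v\rangle$ are the weights from \eqref{eq:PWITLLimitOp}. Since $A$ only connects a vertex to its parent, its children, and itself, the boundary sum at $v\in S_n$ is controlled once its parent and enough of its children lie in $S_n$ (the loop $v\sim v$ is harmless because $v\in S_n$).

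The first step is to exploit the square-summability assumption \eqref{eq:squaresumcond} via Campbell's formula (Lemma \ref{lemma:CampbellsFormula}) to obtain $\sum_{k\geq 1}|y_{vk}|^2<\infty$ almost surely for every $v\in\NN^f$. This allows us to define, for a fixed $\kappa>0$, the random tail index
\[
J_\kappa(v):=\min\Bigl\{j\geq 0 : \sum_{k>j}|y_{vk}|^2\leq \kappa\Bigr\},
\]
which is a.s.\ finite. If $S_n$ contains the parent of $v$ together with the first $J_\kappa(v)$ children of $v$ (the ones carrying the largest weights, by the PWIT ordering), then the only neighbors of $v$ lying outside $S_n$ are the tail children $\{vk: k>J_\kappa(v)\}$, whose squared weights sum to at most $\kappa$.

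The second step is the construction of $S_n$. Let $T_\star\subset\NN^f$ be the random subtree that, at each $v$, retains only the first $J_\kappa(v)$ children. Using Lemma \ref{lemma:Tau<1}, which provides the analogue for a general L\'evy measure $m$ of the $\alpha$-stable tail control used by Bordenave-Caputo-Chafa\"i, one shows $T_\star$ is almost surely locally finite. Now enumerate $\NN^f$ in a breadth-first manner as $u_1,u_2,\dots$, and define $S_n$ to be the smallest subset of $\NN^f$ that contains $\{u_1,\dots,u_n\}$, is closed under parent-taking, and, for every included $v$, contains the first $J_\kappa(v)$ children of $v$. This yields $S_n\subset S_{n+1}$, $\bigcup_n S_n=\NN^f$ (since $u_k\in S_k$), connectedness (by parent-closure the root is always included), and the boundary bound $\kappa$ at every $v\in S_n$ by the observation above.

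The main obstacle is showing that the closure terminates, so that each $S_n$ is almost surely finite: naively, recursively adding essential children of essential children could grow without bound. This is where Lemma \ref{lemma:Tau<1} is essential, as it controls the joint tail of the Poisson points under $m$ strongly enough to force almost sure finiteness of the iterated closure (equivalently, almost sure subcriticality of the associated branching recursion). Once the $S_n$ are known to be finite, the three hypotheses of Proposition \ref{prop:selfadjointcriterion} are immediate, and essential self-adjointness of $A$ follows.
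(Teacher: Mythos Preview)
Your proposal is correct and follows essentially the same approach as the paper: reduce to Proposition~\ref{prop:selfadjointcriterion}, use Lemma~\ref{lemma:Tau<1} to pick $\kappa$ large enough that the offspring variable $J_\kappa$ has mean strictly less than one, and then grow the $S_n$ as finite unions of subcritical Galton--Watson trees so that the boundary sum at every included vertex is at most $\kappa$. One small clarification: the relevant consequence of Lemma~\ref{lemma:Tau<1} is not that $T_\star$ is \emph{locally} finite (that is automatic from $J_\kappa(v)<\infty$ a.s., which only needs \eqref{eq:squaresumcond}) but rather that $\EE J_\kappa<1$ for suitable $\kappa$, which is precisely what forces each Galton--Watson component to be almost surely \emph{finite} and hence makes your closure terminate---so the choice of $\kappa$ is not arbitrary but must be made at the outset via Lemma~\ref{lemma:Tau<1}.
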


\begin{proof}
	While Proposition \ref{prop:MSelfadjoint} may not initially appear to be Proposition A.2 in\cite{SymHeavy}, the proofs are identical as Lemma A.4 in \cite{SymHeavy} is extended to the setting considered here in Lemma \ref{lemma:Tau<1} below.
\end{proof}

\section{Local convergence for the Laplacian of L\'evy-Khintchine matrices}\label{sec:LocConv}

For an $n\times n$ matrix $M$, extend $M$ to a bounded operator on $\ell^2(\NN^f)$ as follows. For $1\leq i,j,\leq n,$ let $\langle \delta_i,M\delta_j\rangle=M_{ij}$. and $\langle\delta_u,M\delta_v\rangle=0$ otherwise.

\begin{theorem}\label{thm:LocalConv01}
	Let $L_n$ be the matrix defined by \eqref{eq:GenLaplacianDef} for $\{A_n \}_{n\geq 1}$, a L\'evy-Khintchine random matrix ensemble satisfying \textbf{C1} and $L$ the linear operator on $\ell^2(\NN^f)$ associated to $\mathbf{PWITL}(m)$. Then, in distribution, $(L_n,1)\rightarrow(L,\varnothing)$, as $n\rightarrow\infty$.
\end{theorem}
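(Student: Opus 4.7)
The plan is to construct, for each $n$, a random bijection $\sigma_n:\NN^f\to\NN^f$ with $\sigma_n(\varnothing)=1$ and to show that $\sigma_n^{-1}L_n\sigma_n\phi\to L\phi$ in $\ell^2(\NN^f)$ in distribution for every $\phi\in\mathcal{D}(\NN^f)$. A natural choice is to build $\sigma_n$ by a breadth-first exploration of $A_n$ rooted at index $1$: having already assigned $\sigma_n$ on a finite subtree, and letting $v$ be the next vertex to explore, enumerate the off-diagonal entries of row $\sigma_n(v)$ of $A_n$ whose column index has not yet been used by $\sigma_n$, order them by decreasing absolute value, and set $\sigma_n(vk)$ equal to the column index of the $k$-th largest. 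Since the support of any fixed $\phi\in\mathcal{D}(\NN^f)$ is finite, it suffices to carry this construction out on a subtree containing $\supp(\phi)$ and then extend arbitrarily to a bijection of $\NN^f$.

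With this coupling in place, I would verify the required convergence entry by entry. For an off-diagonal (edge) entry, the joint distribution of the ordered $|A_{\sigma_n(v),j}^{(n)}|$ together with their signs converges to the marked Poisson point process with intensity $m$. This is a LePage--Woodroofe--Zinn-type statement valid in the L\'evy-Khintchine setting, obtained from the defining convergence in Definition \ref{def:LevyKhintchine} together with the tail bounds \eqref{eq:MeasureDecayAssumption}--\eqref{eq:Entry Tail Assumption}; it appears to be the content of the forward-referenced propositions mentioned in the outline. For the loop entry at $v$ we have $(L_n)_{\sigma_n(v),\sigma_n(v)}=-\sum_{j\neq\sigma_n(v)}A_{\sigma_n(v),j}^{(n)}$, the negative of a full row sum, which by the purely non-Gaussian L\'evy-Khintchine assumption converges in distribution to the sum of the PPP points, matching $y_{vv}$ as defined in \eqref{eq:diagweight}. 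The key point is that this diagonal convergence must be joint with the ordered-edge convergence; the almost-sure uniform summability \eqref{eq:A:almost sure summability} is precisely what is needed to write the row sum as the sum of the largest finitely many entries (handled by the PPP limit) plus a uniformly negligible tail, and to identify the limit of the tail with the tail of the infinite sum $\sum_{k}y_{vk}$.

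To upgrade coordinate-wise convergence to $\ell^2$ convergence of $\sigma_n^{-1}L_n\sigma_n\delta_v$ toward $L\delta_v$, I would use that $L\delta_v$ has square-summable support $\{v\}\cup\{\mathrm{parent}(v)\}\cup\{vk:k\geq 1\}$ by \eqref{eq:squaresumcond}, and bound the $\ell^2$-tail of the revealed row of $A_n$ uniformly in $n$ using \eqref{eq:A:almost sure summability} together with \eqref{eq:Entry Tail Assumption}. A standard diagonal extraction over a countable dense family of test vectors in $\mathcal{D}(\NN^f)$ then yields a single sequence of bijections for which the required distributional convergence holds on all of $\mathcal{D}(\NN^f)$. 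The main obstacle I anticipate is the joint convergence of the largest row entries with the full row sum: each limit is essentially standard on its own, but coupling them requires \eqref{eq:A:almost sure summability} in an essential way, and this is where the purely non-Gaussian hypothesis $\sigma=0$ becomes indispensable — a surviving Gaussian component would pass through the truncation step and produce an extra Gaussian diagonal contribution not visible in $L$. A secondary technical nuisance is that at depth $\geq 1$ finitely many columns of $A_n$ have already been consumed by $\sigma_n$; one must check that removing a fixed finite set of columns does not perturb the Poisson limit for the remaining ordered statistics, which is routine given the row-exchangeability of the off-diagonal entries of $A_n$.
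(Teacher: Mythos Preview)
Your proposal is correct and follows essentially the same route as the paper: a breadth-first exploration of $A_n$ rooted at $1$ to build $\sigma_n$, convergence of the ordered off-diagonal entries to the Poisson point process via Proposition~\ref{OrderStatConv}, the uniform summability assumption \eqref{eq:A:almost sure summability} to couple the diagonal (row-sum) entries with the ordered edge entries, and finally the same summability to upgrade entrywise convergence to $\ell^2$ convergence. The only cosmetic difference is that the paper passes to almost sure convergence via Skorokhod's representation theorem and then lets the truncation parameters $B_n,H_n\to\infty$, rather than extracting along a countable family of test vectors, but this is a matter of bookkeeping.
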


The rest of this section is devoted to the proof of Theorem \ref{thm:LocalConv01}. Before considering $(L_n,1)$ we begin by showing $(A_n,1)$ converges to $(L+D,\varnothing)$ where $D$ is a diagonal operator. This follows from the work of Jung in \cite{Jung16}, we include the proof to establish notation and for the convenience of the reader. We define a network as a graph with edge weights taking values in some normed space. To begin let $G_n$ be the complete network, without loops, on $\{1,\dots,n\}$ whose weight on edge $\{i,j\}$ equals $\xi^n_{ij}$ for some collection $(\xi^n_{ij})_{1\leq i< j\leq n}$ of random variables taking values in some normed space. Now consider the rooted network $(G_n,1)$ with the distinguished vertex $1$.
For any realization $(\xi_{ij}^n)$, and for any $B,H\in\NN$ such that $(B^{H+1}-1)/(B-1)\leq n$, we will define a finite rooted subnetwork $(G_n,1)^{B,H}$ of $(G_n,1)$   whose vertex set coincides with a $B$-ary tree of depth $H$. To this end we partially index the vertices of $(G_n,1)$ as elements in 
$$J_{B,H}:=\bigcup_{l=0}^H\{1,\dots,B\}^l\subset\NN^f,$$
the indexing being given by an injective map $\sigma_n$ from $J_{B,H}$ to $V_n:=\{1,\dots,n\}$. We set $I_\varnothing:=\{1\}$ and the index of the root $\sigma_n^{-1}(1)=\varnothing$. The vertex $v\in V_n\setminus I_\varnothing$ is given the index $(k)=\sigma_n^{-1}(v), 1\leq k\leq B$, if $\xi^n_{1,v}$ has the $k$-{th} largest norm value among $\{\xi_{1j}^n, j\neq 1 \}$, ties being broken by lexicographic order\footnote{To help keep track of notation in this section, note that $v=(w)\in V_n$ if $w\in J_{B,H}$ and $\sigma_n(w)=v$.}. This defines the first generation, and let $I_1$ be the union of $I_\varnothing$ and this generation. If $H\geq 2$ repeat this process for the vertex labeled $(1)$ on $V_n\setminus I_1$ to order $\{\xi_{(1)j}^n\}_{j\in V_n\setminus I_1}$ to get $\{11,12,\dots,1B\}$. Define $I_2$ to be the union of $I_1$ and this new collection. Repeat again for $(2),(3),\dots,(B)$ to get the second generation and so on. Call this vertex set $V_n^{B,H}=\sigma_n J_{B,H}$. 

For a realization $T$ of $\mathbf{PWITL}(m)$, recall we assign the weight $y_{vk}$ to the edge $\{v,vk\}$ and the weight $y_{vv}$ to the edge $\{v,v\}$. Then $(T,\varnothing)$ is a rooted network. Call $(T,\varnothing)^{B,H}$ the finite rooted subnetwork obtained by restricting $(T,\varnothing)$ to the vertex set $J_{B,H}$, and the edge set without the loops. If an edge is not present in $(T,\varnothing)^{B,H}$ assign the weight $0$. We say a sequence $(G_n,1)^{B,H}$, for fixed $B$ and $H$, converges in distribution, as $n\rightarrow\infty$, to $(T,\varnothing)^{B,H}$ if the joint distribution of the weights converges weakly. 

Let $\xi_{ij}^n=L_{ij}=A_{ij}^{(n)}$, where $L_{ij}$ is the $ij$-th entry of $L_n$ for $1\leq i<j\leq n$. We aim to show with the choice of weights $(\xi^n_{ij})_{1\leq i< j\leq n}$ that for fixed $B,H$ $(G_n,1)^{B,H}$ converges weakly to $(T,\varnothing)^{B,H}$.

Order the elements of $J_{B,H}$ lexicographically, i.e. $\varnothing\prec 1\prec 2\prec\cdots\prec B\prec 11\prec 12\prec\cdots\prec B\cdots B$. For $v\in J_{B,H}$ let $\mathcal{O}_v$ denote the offspring of $v$ in $(G_n,1)^{B,H}$. By construction $I_\varnothing=\{1\}$ and $I_v=\sigma_n\left(\bigcup_{w\prec v}\mathcal{O}_w \right)$, where $w\prec v$ must be strict in this union. Thus at every step of the indexing procedure we order the weights of neighboring edges not already considered at a previous step. Thus for all $v$,
$$(\xi_{\sigma_n(v),j }^n)_{j\notin I_v}\overset{d}{=}(\xi_{1j}^n)_{1< j\leq n-|I_v|}.$$  

Note that by independence, Proposition \ref{OrderStatConv} still holds if you take the sum of Dirac measures at the random variables over $\{1,\dots,n \}\setminus I$ for any fixed finite set $I$. Thus by Proposition \ref{OrderStatConv} the weights from a fixed parent to its offspring in $(G_n,1)^{B,H}$ converge weakly to those of $(T,\varnothing)^{B,H}$. By independence we can extend this to joint convergence. Recall $(G_n,1)^{B,H}$ is a complete graph and not a tree with loops. Thus it remains to show the edges in $(G_n,1)^{B,H}$ which were not considered in the sorting procedure converge to $0$. This was shown for heavy-tailed weights in \cite{SymHeavy} and for more general L\'evy-Khintchine weights in \cite{Jung16}.

Let $L$ be the operator associated to $\mathbf{PWITL}(m)$. For fixed $B,H$ let $\sigma_n^{B,H}$ be the map $\sigma_n$ above associated to $(G_n,1)^{B,H}$, and arbitrarily extend $\sigma_n^{B,H}$ to a bijection on $\NN^f$, where $V_n$ is considered in the natural way as a subset of the offspring of $\varnothing$.  From the Skorokhod representation theorem we may assume $(G_n,1)^{B,H}$ converges almost surely to $(T,\varnothing)^{B,H}$. Thus there are sequences $B_n,H_n$ tending to infinity and $\hat{\sigma}_n:=\sigma_n^{B_n,H_n}$ such that for any pair $v,w\in\NN^f$ with $w\neq v$, $\xi_{\hat{\sigma}_n(v),\hat{\sigma}_n(w)}^n$ converges almost surely to 
$$\begin{cases}
y_{vk},\quad \text{if $w=vk$ for some $k$}\\
y_{wk},\quad \text{if $v=wk$ for some $k$}\\
0,\quad\text{otherwise.}
\end{cases}$$ 
Thus for any $u,v\in \NN^f$ with $u\neq v$\begin{equation}\label{eq:nondiagconv}
\langle \delta_u, \hat{\sigma}_n^{-1}L_n\hat{\sigma}_n\delta_v\rangle=\langle \delta_u, \hat{\sigma}_n^{-1}A_n\hat{\sigma}_n\delta_v\rangle\rightarrow\langle\delta_u,(L+D)\delta_v\rangle=\langle\delta_u,L\delta_v\rangle
\end{equation} almost surely. We now consider the diagonal elements. Let $u\in\NN^f$, $B=H=k$ for some $k\in\NN$ such that $u\in J_{k,k}$. From the above we know almost surely \begin{equation}
\sum_{v\in J_{k,k}} \xi^n_{(u),(v)}\rightarrow\sum_{v\in J_{k,k}, v\sim u} y_v.
\end{equation} Assume $v\notin J_{k,k}$, then $\xi^n_{(u),(v)}\rightarrow 0$ almost surely. By the uniform summability condition of \textbf{C1} we have almost surely \begin{equation}
\sum_{v\notin J_{k,k}} \xi^n_{(u),(v)}\rightarrow0.
\end{equation} As $k$ was arbitrarily large we have that almost surely for any $v\in\NN^f$ \begin{equation}
\langle\delta_v,\hat{\sigma}_n^{-1}L_n\hat{\sigma}_n\delta_v\rangle\rightarrow\langle\delta_v,L\delta_v\rangle.
\end{equation} From linearity it suffices to show for every $v\in\NN^f$ that $\hat{\sigma}_n^{-1}L_n\hat{\sigma}_n\delta_v\rightarrow L\delta_v$, i.e.\begin{equation}\label{eq:l2conv}
\sum_{u\in\NN^f}\left[\langle\delta_u,\hat{\sigma}_n^{-1}L_n\hat{\sigma}_n\delta_v\rangle-\langle\delta_u,L\delta_v\rangle \right]^2\rightarrow 0.
\end{equation} We have shown $\langle\delta_u,\hat{\sigma}_n^{-1}L_n\hat{\sigma}_n\delta_v\rangle\rightarrow\langle\delta_u,L\delta_v\rangle$ almost surely for every $u\in\NN^f$, thus \eqref{eq:l2conv} holds if $\left\{\langle\delta_u,\hat{\sigma}_n^{-1}L_n\hat{\sigma}_n\delta_v\rangle \right\}_{u\in\NN^f}$ is uniformly square-summable. This follows from the uniform summability of \textbf{C1}. This completes the proof of Theorem \ref{thm:LocalConv01}.

We will need the following extension of Theorem \ref{thm:LocalConv01}.
\begin{theorem}\label{thm:LocalConv2}
	Let $L_n$ be the matrix defined by \eqref{eq:GenLaplacianDef} for $\{A_n \}_{n\geq 1}$, a L\'evy-Khintchine random matrix ensemble satisfying \textbf{C1}. If $L$ and $L'$ are two independent copies of the linear operator on $\ell^2(\NN^f)$ associated to $\mathbf{PWITL}(m)$, then, in distribution, $(L_n\oplus L_n,(1,2))\rightarrow(L\oplus L',(\varnothing,\varnothing))$ as $n\rightarrow\infty$.
\end{theorem}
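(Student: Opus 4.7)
The plan is to adapt the proof of Theorem \ref{thm:LocalConv01} by carrying out two simultaneous explorations, one rooted at vertex $1$ and one rooted at vertex $2$. Specifically, for fixed $B,H\in\NN$ with $2(B^{H+1}-1)/(B-1)\leq n$, I would construct two finite rooted subnetworks $(G_n,1)^{B,H}$ and $(G_n,2)^{B,H}$ of the complete network on $V_n=\{1,\dots,n\}$ using exactly the BFS ordering-by-largest-weight procedure from the proof of Theorem \ref{thm:LocalConv01}. This yields two injective maps $\sigma_n^{(1)},\sigma_n^{(2)}\colon J_{B,H}\to V_n$ with $\sigma_n^{(1)}(\varnothing)=1$ and $\sigma_n^{(2)}(\varnothing)=2$, which we extend arbitrarily (and measurably) to bijections $\hat{\sigma}_n^{(1)},\hat{\sigma}_n^{(2)}$ of $\NN^f$.

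The first key step is to show that the two explorations are asymptotically disjoint, in the sense that
\begin{equation*}
\PP\bigl(\sigma_n^{(1)}(J_{B,H})\cap\sigma_n^{(2)}(J_{B,H})\neq\varnothing\bigr)\longrightarrow 0,
\end{equation*}
as $n\to\infty$ with $B,H$ fixed. This is a birthday-type estimate: each exploration occupies a bounded number $|J_{B,H}|$ of vertices of $V_n$, and at every sorting step the rank of any particular candidate vertex among the remaining row is exchangeable, so the chance that the two explorations select a common vertex is $O(|J_{B,H}|^2/n)$. On the complementary event, the collections of matrix entries $\{A_{\sigma_n^{(1)}(v),\sigma_n^{(1)}(w)}^{(n)}\}$ and $\{A_{\sigma_n^{(2)}(v),\sigma_n^{(2)}(w)}^{(n)}\}$ that determine the two subnetworks are indexed by disjoint unordered pairs, hence are independent by the L\'evy-Khintchine hypothesis on $A_n$.

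Given disjointness and independence, the joint law of $((G_n,1)^{B,H},(G_n,2)^{B,H})$ factors as a product in the limit. Applying the single-root weak convergence established in the proof of Theorem \ref{thm:LocalConv01} to each factor separately gives joint weak convergence to $((T,\varnothing)^{B,H},(T',\varnothing)^{B,H})$ where $T,T'$ are independent realizations of $\mathbf{PWITL}(m)$. Passing to sequences $B_n,H_n\to\infty$ and applying Skorokhod's representation theorem on the product, the off-diagonal operator entries
\begin{equation*}
\langle\delta_u,(\hat{\sigma}_n^{(i)})^{-1}L_n\hat{\sigma}_n^{(i)}\delta_v\rangle\longrightarrow\langle\delta_u,L^{(i)}\delta_v\rangle,\qquad i=1,2,
\end{equation*}
converge almost surely, where $L^{(1)}=L$ and $L^{(2)}=L'$. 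The diagonal entries are handled precisely as in the proof of Theorem \ref{thm:LocalConv01}, using the uniform summability in Condition \textbf{C1}, and the same uniform summability upgrades the pointwise convergence of matrix entries to $\ell^2(\NN^f)$-convergence of $(\hat{\sigma}_n^{(i)})^{-1}L_n\hat{\sigma}_n^{(i)}\delta_v$ to $L^{(i)}\delta_v$ for every $v\in\NN^f$.

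To assemble the direct sum, I would use the bijection of the two disjoint copies of $\NN^f$ underlying $\ell^2(\NN^f)\oplus\ell^2(\NN^f)$ that acts as $\hat{\sigma}_n^{(1)}$ on the first summand and $\hat{\sigma}_n^{(2)}$ on the second, sending the marked pair $(\varnothing,\varnothing)$ to $(1,2)$. Because $L_n\oplus L_n$ is block-diagonal with respect to this direct sum decomposition, the two convergences above combine directly to $(L_n\oplus L_n,(1,2))\to(L\oplus L',(\varnothing,\varnothing))$ in distribution. The main obstacle is the asymptotic disjointness step: once that is established, independence of disjoint entries of $A_n$ and the single-root result do all the remaining work.
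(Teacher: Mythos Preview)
Your proposal is correct and takes essentially the same approach as the paper: the paper's proof simply invokes Proposition 2.6 of \cite{SymHeavy} together with ``the arguments above,'' and what you have written is precisely an unpacking of that proposition---the two-root exploration, the birthday-type asymptotic disjointness estimate, independence on the disjoint event, and then the diagonal/uniform-summability step repeated verbatim from the single-root proof. There is nothing materially different between your argument and the cited one; you have just filled in the details that the paper leaves to the reference.
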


\begin{proof}
	Using Proposition 2.6 in \cite{SymHeavy} and the arguments above we can construct isometries $\sigma_n$ on $\ell^2(\NN^f)\oplus\ell^2(\NN^f)$ such that for any $v\in\NN^f,\ \sigma_n^{-1}(L_n\oplus L_n)\sigma_n(\delta_v,0)\rightarrow L\oplus L'(\delta_v,0)$ and $\sigma_n^{-1}(L_n\oplus L_n)\sigma_n(0,\delta_v)\rightarrow L\oplus L'(0,\delta_v)$ in $\ell^2(\NN^f)\oplus\ell^2(\NN^f)$ almost surely. The result then follows by linearity.
\end{proof}

\section{Resolvent convergence and the proof of Theorem \ref{thm:ESMConv}}\label{sec:ResConv} 

\begin{theorem}\label{thm:ExpectedStieltjes}
	Let $s_{L_n}(z)$ be the Stieltjes transform of $\mu_{L_n}$ and let $s_{\varnothing}(z)$ be the Stieltjes transform of the measure $\mu_\varnothing$ defined by \begin{equation}
	\langle\delta_\varnothing,f(L)\delta_\varnothing\rangle=\int_\RR fd\mu_\varnothing
	\end{equation} for any continuous bounded function $f:\RR\rightarrow\CC$, where $f(L)$ is defined by the continuous functional calculus. Then \begin{equation}
	\lim\limits_{n\rightarrow\infty}\EE s_{L_n}(z)=\EE s_{\varnothing}(z)
	\end{equation} for every $z\in\CC_+$.
\end{theorem}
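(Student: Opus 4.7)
The plan is to reduce the expected Stieltjes transform to a single diagonal resolvent entry, apply the local convergence result of Section~\ref{sec:LocConv}, and then pass to the limit using a uniform bound on resolvent matrix entries.

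First, I would write
\begin{equation*}
	\EE s_{L_n}(z) = \frac{1}{n}\sum_{i=1}^n \EE \langle \delta_i, (L_n - z)^{-1} \delta_i \rangle.
\end{equation*}
Since the joint law of $(A_{ij}^{(n)})$ is invariant under simultaneous permutation of rows and columns, so is the law of $L_n$, and hence each summand equals $\EE \langle \delta_1, (L_n - z)^{-1} \delta_1 \rangle$. Thus
\begin{equation*}
	\EE s_{L_n}(z) = \EE \langle \delta_1, (L_n - z)^{-1} \delta_1 \rangle.
\end{equation*}

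Next, by Theorem~\ref{thm:LocalConv01}, $(L_n, 1) \to (L, \varnothing)$ in distribution, where $L$ is the operator associated to $\mathbf{PWITL}(m)$ and, by Proposition~\ref{prop:MSelfadjoint}, is almost surely essentially self-adjoint. Theorem~\ref{Thm:ResfromLC} states that local convergence of self-adjoint operators implies convergence of the diagonal resolvent entries at the distinguished vertex; this is a continuous operation on the space of rooted operators, so by the continuous mapping theorem the convergence in distribution transfers, yielding
\begin{equation*}
	\langle \delta_1, (L_n - z)^{-1} \delta_1 \rangle \Rightarrow \langle \delta_\varnothing, (L - z)^{-1} \delta_\varnothing \rangle
\end{equation*}
as $n \to \infty$, for each fixed $z \in \CC_+$. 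By the continuous functional calculus applied to the bounded continuous function $x \mapsto (x-z)^{-1}$, the right-hand side equals $\int_\RR (x-z)^{-1}\, d\mu_\varnothing(x) = s_\varnothing(z)$.

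Finally, to upgrade convergence in distribution to convergence in expectation, I would use the deterministic bound
\begin{equation*}
	\bigl|\langle \delta_1, (L_n - z)^{-1} \delta_1 \rangle\bigr| \leq \|(L_n - z)^{-1}\|_{\mathrm{op}} \leq \frac{1}{\Im z},
\end{equation*}
valid since $L_n$ is self-adjoint, and similarly $|\langle \delta_\varnothing, (L-z)^{-1}\delta_\varnothing\rangle| \leq 1/\Im z$. This uniform bound provides uniform integrability, so the bounded convergence theorem yields $\EE \langle \delta_1, (L_n - z)^{-1} \delta_1 \rangle \to \EE s_\varnothing(z)$, completing the proof.

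The only real obstacle is the passage from the deterministic statement in Theorem~\ref{Thm:ResfromLC} to its distributional analogue, which requires noting that the map sending a (rooted, essentially self-adjoint) operator to the value of its resolvent matrix element at the root is measurable and continuous in the local topology; once this is observed the argument is a straightforward application of continuous mapping and bounded convergence.
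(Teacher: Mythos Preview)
Your proposal is correct and follows essentially the same route as the paper's proof: reduce $\EE s_{L_n}(z)$ to a single diagonal resolvent entry by exchangeability, invoke Theorem~\ref{thm:LocalConv01} together with Proposition~\ref{prop:MSelfadjoint} and Theorem~\ref{Thm:ResfromLC} to obtain convergence in distribution of $\langle\delta_1,(L_n-z)^{-1}\delta_1\rangle$ to $s_\varnothing(z)$, and then pass to expectations via the uniform bound $1/\Im z$. The only cosmetic difference is that the paper handles the distributional passage by having already embedded Skorokhod's representation into the proof of Theorem~\ref{thm:LocalConv01}, whereas you phrase it as a continuous mapping argument; these are equivalent.
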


\begin{proof}
	For $z\in\CC_+$ we define the operators \begin{equation}
	R_n(z)=(L_n-z)^{-1},
	\end{equation} and \begin{equation}
	R(z)=(L-z)^{-1}.
	\end{equation} Additionally for $u,v\in\NN^f$, we define the functions $R_n(z)_{uv},R(z)_{uv}:\CC_+\rightarrow\CC$ by\begin{equation}
	R_n(z)_{uv}:=\langle\delta_u,R_n(z)\delta_v\rangle,\quad\text{ and }\quad R(z)_{uv}:=\langle\delta_u,R(z)\delta_v\rangle.
	\end{equation}
	From Proposition \ref{prop:MSelfadjoint}, $L$ is  self-adjoint with probability $1$. Thus from Theorem \ref{Thm:ResfromLC} and Theorem \ref{thm:LocalConv01} \begin{equation}
	R_n(z)_{11}=\langle\delta_1,(L_n-z)^{-1}\delta_1\rangle\Rightarrow\langle\delta_\varnothing,(L-z)^{-1}\delta_\varnothing\rangle=R(z)_{\varnothing\varnothing}.
	\end{equation} For every $z\in\CC_+$, $R_n(z)_{11}$ and $R(z)_{\varnothing\varnothing}$ are bounded, thus \begin{equation}
	\lim\limits_{n\rightarrow\infty}\EE R_n(z)_{11}=\EE R(z)_{\varnothing\varnothing}.
	\end{equation} By definition $s_\varnothing(z)=R(z)_{\varnothing\varnothing}$, while \begin{equation}
	s_{L_n}(z)=\frac{1}{n}\tr(L_n-z)^{-1}=\frac{1}{n}\sum_{i =1}^n R_n(z)_{ii}.
	\end{equation} It is clear from the matrix of cofactors method of inversion $R_n(z)_{ii}\overset{d}{=}R_n(z)_{jj}$ for every $i,j\in[n]$. Thus \begin{align*}
	\EE s_{L_n}(z)&=\frac{1}{n}\sum_{i =1}^n\EE R_n(z)_{ii}\\
				  &=\frac{1}{n}\sum_{i =1}^n\EE R_n(z)_{11}\\
				  &=\EE R_n(z)_{11}.
	\end{align*} This completes the proof.
\end{proof}

\subsection{Proof of Theorem \ref{thm:ESMConv}}

We are now ready to complete the proof of Theorem \ref{thm:ESMConv}. From Lemma \ref{lemma:tightness}, $\{\mu_{L_n}\}_{n\geq 1}$ is almost surely tight. Consider the Stieltjes transfrom $s_{L_n}$ of $\mu_{L_n}$. From tightness and Lemma \ref{lemma:StieltjestoVague} it is enough to prove that almost surely there exists the Stieltjes transform, $s$, of a probability measure such that for any subsequence $\{n_k\}$  \begin{equation}
	\lim_{n_k\rightarrow\infty}s_{L_{n_k}}(z)=s(z),
\end{equation} for all $z\in\CC_+$. We know from Theorem \ref{thm:ExpectedStieltjes} that for all $z\in\CC_+$\begin{equation}
\lim\limits_{n\rightarrow\infty}\EE s_{L_n}(z)=\EE s_{\varnothing}(z).
\end{equation} We now upgrade this to almost surely convergence of $s_{L_n}(z)$ to $\EE s_{\varnothing}(z)$. For $z\in\CC_+$\begin{equation*}
\EE|s_{L_n}(z)-\EE s_{\varnothing}(z)|\leq\EE|s_{L_n}(z)-\EE s_{L_n}(z)|+|\EE s_{L_n}(z)-\EE s_{\varnothing}(z)|.
\end{equation*} For $z\in\CC_+$\begin{equation}
\EE|s_{L_n}(z)-\EE s_{L_n}(z)|=\EE\left|\frac{1}{n}\sum_{i =1}^n\left( R_n(z)_{ii}-\EE R_n(z)_{ii}\right) \right|,
\end{equation} and by the exchangeability of the matrix entries\begin{align*}
\EE\left[\left|\frac{1}{n}\sum_{i =1}^n R_n(z)_{ii}-\EE R_n(z)_{ii} \right|^2 \right]&=\frac{1}{n}\EE\left|R_n(z)_{11}-\EE R_n(z)_{11} \right|^2\\
	&\qquad+\frac{n-1}{n}\cov(R_n(z)_{11},R_n(z)_{22})\\
	&\leq \frac{1}{n\Im(z)^2}\\
	&\qquad+\frac{n-1}{n}\cov(R_n(z)_{11},R_n(z)_{22}).
\end{align*} From Theorems \ref{Thm:ResfromLC} and \ref{thm:LocalConv2} we know $R_{n}(z)_{11}$ and $R_n(z)_{22}$ are asymptotically independent random variables bounded uniformly in $n$, and thus asymptotically uncorrelated. From this we get \begin{equation*}
\lim\limits_{n\rightarrow\infty}\EE\left[\left|\frac{1}{n}\sum_{i =1}^n R_n(z)_{ii}-\EE R_n(z)_{ii} \right|^2 \right]=0,
\end{equation*} and \begin{equation}\label{eq:StieltjesConv}
\lim\limits_{n\rightarrow\infty}\EE|s_{L_n}(z)-\EE s_{\varnothing}(z)|=0.
\end{equation} Taking $\mu_m=\EE\mu_\varnothing$ completes the proof of Theorem \ref{thm:ESMConv}.

\section{Proof of theorem \ref{thm:RDEofStieltjes}}\label{sec:RDE}

We will follow the approach of \cite{SymHeavy} and take advantage of the tree structure on $\NN^f$ to arrive at \eqref{eq:RDEofStieltjes} before proving uniqueness. Let $L$ be the operator associated to $\mathbf{PWITL}(m)$, we have already seen that $s_m(z)=\EE s_\varnothing(z)$ where \begin{equation}
s_\varnothing(z)=\langle\delta_\varnothing,(L-z)^{-1}\delta_\varnothing\rangle.
\end{equation} We now decompose the operator $L$ as \begin{equation}\label{eq:OpDecomp}
L=C+\bigoplus_{k=1}^\infty L_k
\end{equation} where \begin{align}
\langle \delta_k,C\delta_\varnothing\rangle=\langle \delta_k,L\delta_\varnothing\rangle\nonumber\\
\langle \delta_k,C\delta_k\rangle=-y_k\\
\langle \delta_\varnothing,C\delta_\varnothing\rangle=\langle \delta_\varnothing,L\delta_\varnothing\rangle\nonumber
\end{align} and for every $k\in\NN$, $L_k$ is supported on $k\NN^f=\{kv\in\NN^f: v\in\NN^f \}$. Note $\langle \delta_v, C\delta_u\rangle=0$ for any other combination of $u,v\in\NN^f$. Under this decomposition $\{L_k\}_{k\geq 1}$ is a collection of i.i.d.\ random operators each equal in distribution, up to an isometry, to $L$. For convenience define the operator $\tilde{L}$ by \begin{equation}
\tilde{L}:=\bigoplus_{k=1}^\infty L_k,
\end{equation} and the operators $R(z):=(L-z)^{-1}$ and $\tilde{R}(z):=(\tilde{L}-z)^{-1}$ for $z\in\CC_+$. From \eqref{eq:OpDecomp} we get the resolvent identity \begin{equation}\label{eq:ResIdent}
\tilde{R}(z)CR(z)=\tilde{R}(z)-R(z).
\end{equation} Additionally denote by $R_{uv}(z):=\langle \delta_u, R(z)\delta_v\rangle$ and $\tilde{R}_{uv}(z):=\langle \delta_u, \tilde{R}(z)\delta_v\rangle$. Note $\tilde{R}_{\varnothing\varnothing}(z)=-z^{-1}$, $\tilde{R}_{kl}(z)=0$ for all $k,l\in\NN$ with $k\neq l$, and $\tilde{R}_{\varnothing k}(z)=0=\tilde{R}_{k\varnothing}(z)$ for all $k\in\NN$. 

From \eqref{eq:ResIdent} one immediately gets \begin{equation}
\langle \delta_k, \tilde{R}(z)CR(z)\delta_\varnothing\rangle=-R_{k\varnothing}(z).
\end{equation} It also follows that\begin{align*}
\langle \delta_k, \tilde{R}(z)CR(z)\delta_\varnothing\rangle&=\left\langle \delta_k, \tilde{R}(z)C\sum_{v\in\NN^f}R_{v\varnothing}(z)\delta_v\right\rangle\\
&=\left\langle \delta_k, \tilde{R}(z)R_{\varnothing\varnothing}(z)y_{\varnothing\varnothing}\delta_\varnothing\right\rangle+\left\langle \delta_k, \tilde{R}(z)\sum_{l\in\NN}R_{\varnothing\varnothing}(z)y_l\delta_l\right\rangle\\&\quad+\left\langle \delta_k, \tilde{R}(z)\sum_{j\in\NN}R_{j\varnothing}(z)y_j\delta_\varnothing\right\rangle
-\left\langle \delta_k, \tilde{R}(z)\sum_{j\in\NN}R_{j\varnothing}(z)y_j\delta_j\right\rangle\\
&=0+\tilde{R}_{kk}(z)y_kR_{\varnothing\varnothing}(z)+0-\tilde{R}_{kk}(z)y_kR_{k\varnothing}(z).
\end{align*} Rearranging we arrive at \begin{equation}\label{eq:FirstResI}
R_{k\varnothing}(z)=\frac{\tilde{R}_{kk}(z)y_kR_{\varnothing\varnothing}(z)}{\tilde{R}_{kk}y_k-1}.
\end{equation} A similar computation for $\langle \delta_\varnothing, \tilde{R}(z)CR(z)\delta_\varnothing\rangle$ gives \begin{equation}\label{eq:SecondResI}
\tilde{R}_{\varnothing\varnothing}(z)-R_{\varnothing\varnothing}(z)=\tilde{R}_{\varnothing\varnothing}(z)y_{\varnothing\varnothing}R_{\varnothing\varnothing}(z)+\sum_{j=1}^\infty \tilde{R}_{\varnothing\varnothing}(z)y_{j}R_{j\varnothing}(z).
\end{equation} Combining \eqref{eq:FirstResI} and \eqref{eq:SecondResI} gives \begin{align*}
\tilde{R}_{\varnothing\varnothing}(z)&=R_{\varnothing\varnothing}(z)+\tilde{R}_{\varnothing\varnothing}(z)y_{\varnothing\varnothing}R_{\varnothing\varnothing}(z)+\sum_{j=1}^\infty \tilde{R}_{\varnothing\varnothing}(z)y_{j}R_{j\varnothing}(z)\\
&=R_{\varnothing\varnothing}(z)+\tilde{R}_{\varnothing\varnothing}(z)y_{\varnothing\varnothing}R_{\varnothing\varnothing}(z)+\sum_{j=1}^\infty \tilde{R}_{\varnothing\varnothing}(z)y_{j}\frac{\tilde{R}_{jj}(z)y_jR_{\varnothing\varnothing}(z)}{\tilde{R}_{jj}y_j-1}\\
&=R_{\varnothing\varnothing}(z) \left[1+\tilde{R}_{\varnothing\varnothing}(z)y_{\varnothing\varnothing}+\sum_{j=1}^\infty \tilde{R}_{\varnothing\varnothing}(z)y_{j}\frac{\tilde{R}_{jj}(z)y_j}{\tilde{R}_{jj}y_j-1}\right],\\
\end{align*} which, along with $\tilde{R}_{\varnothing\varnothing}(z)=-z^{-1}$, implies\begin{equation}
R_{\varnothing\varnothing}=-\left(z-y_{\varnothing\varnothing}-\sum_{j=1}^\infty\frac{\tilde{R}_{jj}(z)y_j^2}{\tilde{R}_{jj}(z)y_j-1} \right)^{-1}.
\end{equation} Noting $y_{\varnothing\varnothing}=-\sum_{j=1}^\infty y_j$ gives \eqref{eq:RDEofStieltjes}. Note that for $j\in\NN$ $\tilde{R}_{jj}(z)$ depends only on $z$ and $L_j$, and hence $\{\tilde{R}_{jj}(z)\}_{j\in\NN}$ is a collection of i.i.d.\ random variables independent of $\{y_j\}_{j\in\NN}$.

\subsection{Uniqueness} In this section we prove uniqueness of the solution to \eqref{eq:RDEofStieltjes} from Theorem \ref{thm:RDEofStieltjes}. While the argument is technical, the core is a contraction approach. We will show the map $T$ defined below in \eqref{eq:MapDefinition} would contract, in an appropriate metric, two fixed points belonging to a nice subset of all probability measures on the space of Stieltjes transforms. We then extend this result to any two potential fixed points by moving from this metric to a functional separating distinct points. 

Let $\mathcal{S}$ be the set of Stieltjes transforms of probability measures on $\RR$ and $\mathcal{P}(\mathcal{S})$ be the set of probability measures on $\mathcal{S}$. Define $T:\mathcal{P}(\mathcal{S})\rightarrow\mathcal{P}(\mathcal{S})$ as follows: for $\mu\in\mathcal{P}(\mathcal{S})$\begin{equation}\label{eq:MapDefinition}
	T(\mu):=\mathcal{L}\left(-\left(z-\sum_{j=1}^N\frac{y_j}{s_j(z)y_j-1} \right)^{-1}\right),
\end{equation} 
where $\{s_j\}$ are i.i.d.\ with distribution $\mu$, $\{y_j\}_{j=1}^\infty$ is a Poisson point process with a fixed intensity measure $m$ independent of the collection $\{s_j\}_{j\geq1}$, $N$ is a Poisson random variable with mean $m(\RR)$ such that $y_j=0$ if $j>N$, and $\mathcal{L}(X)$ is the law of a random variable $X$.  Thus the distribution of $s_\varnothing$ is a fixed point of $T$ and we aim to show it is the unique fixed point. The notation of distance for which $T$ contracts fixed points will involve the infimum over all couplings of these fixed point measures. Let $\mu_1,\mu_2\in\mathcal{P}(\mathcal{S})$ be two fixed points of $T$ and let $(s(z),r(z))$ be an arbitrary coupling of $\mu_1$ and $\mu_2$. Additionally let $\mu_r$ and $\mu_s$ be the random probability measures on $\RR$ defined uniquely by\begin{equation*}
	s(z)=\int_\RR\frac{1}{x-z}d\mu_s(x)\text{ and }r(z)=\int_\RR\frac{1}{x-z}d\mu_r(x),
\end{equation*} for all $z\in\CC_+$. For now we will assume there exists $M\in\NN$ such that almost surely $\mu_r([-M,M])\geq\frac{1}{2}$ and $\mu_s([-M,M])\geq \frac{1}{2}$. This assumption will be removed later. As $r$ and $s$ are analytic functions on the upper half plane we will consider them only on the box \begin{equation}\label{eq:Set definition}
\mathcal{C}_M=\left\{z\in\CC: |\Re(z)|\leq\frac{1}{2},\, f_m(M)\leq\Im(z)\leq f_m(M)+1 \right\},
\end{equation} where $f_m$ is a positive increasing function on $\NN$ such that $f_m(M)\rightarrow\infty$ as $M\rightarrow\infty$, which will be chosen later to satisfy \eqref{eq:A:f_m def} below. Note for $z\in\mathcal{C}_M$, the assumption on $\mu_r$ and $\mu_s$ imply $\Im(r(z)),\Im(s(z))\geq\frac{1}{2}\frac{\Im(z)}{(M+\frac{1}{2})^2+\Im(z)^2}$.  Let $\{(r_j,s_j) \}_{j=1}^N$ be i.i.d.\ copies of $(s(z),r(z))$. Define the random functions $\tilde{s}$ and $\tilde{r}$, pointwise on $\CC_+$ and the sample space, by \begin{equation}\label{eq:specificcoupling}
\tilde{s}(z)=-\left(z-\sum_{j=1}^N\frac{y_j}{s_j(z)y_j-1} \right)^{-1}\text{ and }\tilde{r}(z)=-\left(z-\sum_{j=1}^N\frac{y_j}{r_j(z)y_j-1} \right)^{-1},
\end{equation} where $\{y_j\}_{j=1}^\infty$ is a Poisson point process with intensity measure $m$ independent of the collection $\{(r_j,s_j) \}_{j=1}^\infty$. If $\mu_1$ and $\mu_2$ are fixed points of $T$, then $(\tilde{r},\tilde{s})$ is a coupling of $\mu_1$ and $\mu_2$. We show that \begin{equation*}
\EE\sup_{z\in\mathcal{C}_M}|\tilde{r}(z)-\tilde{s}(z)|\leq  \frac{4}{5}\EE\sup_{z\in\mathcal{C}_M}|r(z)-s(z)|
\end{equation*} for an appropriate choice of $f_m(M)$ independent of the coupling $(r,s)$. First note \begin{align}\label{eq:UniquenessStep1}
\EE\sup_{z\in\mathcal{C}_M}|\tilde{r}(z)-\tilde{s}(z)|&\leq\EE\sup_{z\in\mathcal{C}_M}\frac{1}{\Im(z)^2}\sum_{j=1}^N\frac{|r_j(z)-s_j(z)|y_j^2}{|r_j(z)y_j-1||s_j(z)y_j-1|}\nonumber\\
			&\leq\EE\sum_{j=1}^N\sup_{z\in\mathcal{C}_M}\frac{1}{\Im(z)^2}\frac{|r_j(z)-s_j(z)|y_j^2}{|r_j(z)y_j-1||s_j(z)y_j-1|}.
\end{align} To handle the denominator we will consider separately the points where $\Re(r_j(z)y_j)$ is small and the few points where $\Im(r_j(z)y_j)$ is large. Let $\hat{m}$ be equal to $m$ with support restricted to $[-f_m(M)/2,f_m(M)/2]$ and $\tilde{m}:=m-\hat{m}$. Decompose the point process $\{y_j\}_{j=1}^N$ into two independent Poisson point processes $\{\hat{y}_j\}^{\hat{N}}_{j=1}$ and $\{\tilde{y}_j\}^{\tilde{N}}_{j=1}$ with intensity measures $\hat{m}$ and $\tilde{m}$ respectively. We will divide the sum in \eqref{eq:UniquenessStep1} into two sums over these point processes. To begin note for $z\in\mathcal{C}_M$, $|s_j(z)\hat{y}_j-1||r_j(z)\hat{y}_j-1|\geq1/4$ and thus \begin{align}
\EE\sum_{j=1}^{\hat{N}}\sup_{z\in\mathcal{C}_M}\frac{1}{\Im(z)^2}\frac{|r_j(z)-s_j(z)|\hat{y}_j^2}{|r_j(z)\hat{y}_j-1||s_j(z)\hat{y}_j-1|}&\leq\EE\sum_{j=1}^{\hat{N}}\sup_{z\in\mathcal{C}_M}\frac{4}{\Im(z)^2}|r_j(z)-s_j(z)|\hat{y}_j^2\nonumber\\
&\leq \frac{4}{f_m(M)^2}\EE\sum_{j=1}^{\hat{N}}\sup_{z\in\mathcal{C}_M}|r_j(z)-s_j(z)|\hat{y}_j^2\nonumber\\
&=\frac{4}{f_m(M)^2}\EE\sup_{z\in\mathcal{C}_M}|r_1(z)-s_1(z)|I_{M,m},\nonumber
\end{align} where $I_{M,m}=\int_{-f_m(M)/2}^{f_m(M)/2}y^2dm(y)$ and the last equality follows from Lemma \ref{lemma:CampbellsFormula} and independence. From \eqref{eq:MeasureDecayAssumption}\begin{align*}
\int_{-f_m(M)/2}^{f_m(M)/2}y^2dm(y)&\leq \int_{-1}^1 y^2\ dm(y)+\int_{1\leq|x|\leq\sqrt{f_m(M)}}y^2\ dm(y)\\
								   &\qquad+ \int_{\sqrt{f_m(M)}\leq|x|\leq f_m(M)/2}y^2\ dm(y)\\
								   &\leq C'+C'f_m(M)+ \frac{f_m(M)^2}{4}m(\{x: |x|\geq \sqrt{f_m(M)}\})\\
								   & \leq Cf_m(M)^{2-\ee/2}
\end{align*} where $\ee>0$ is from \eqref{eq:MeasureDecayAssumption}, and $C,C'>0$ are constants which depend only on the measure $m$. Thus \begin{equation}\label{eq:HatContraction}
\EE\sum_{j=1}^{\hat{N}}\sup_{z\in\mathcal{C}_M}\frac{1}{\Im(z)^2}\frac{|r_j(z)-s_j(z)|\hat{y}_j^2}{|r_j(z)\hat{y}_j-1||s_j(z)\hat{y}_j-1|}\leq\frac{C'}{f_m(M)^{\ee/2}}\EE\sup_{z\in\mathcal{C}_M}|r_1(z)-s_1(z)|.
\end{equation}
To handle the other sum first note $|s_j(z)\tilde{y}_j-1||r_j(z)\tilde{y}_j-1|\geq \tilde{y}_j^2 C_{z,M}^{-1}$ where\begin{equation*}
C_{z,M}=\frac{4\left(\left(M+\frac{1}{2}\right)^2+\Im(z)^2\right)^2}{\Im(z)^2}.
\end{equation*} Then \begin{align}\label{eq:TildeContration}
\EE\sum_{j=1}^{\tilde{N}}\sup_{z\in\mathcal{C}_M}\frac{1}{\Im(z)^2}\frac{|r_j(z)-s_j(z)|\tilde{y}_j^2}{|r_j(z)\tilde{y}_j-1||s_j(z)\tilde{y}_j-1|}&\leq\EE\sum_{j=1}^{\tilde{N}}\sup_{z\in\mathcal{C}_M}\frac{C_{z,M}}{\Im(z)^2}|r_j(z)-s_j(z)|\nonumber\\
&\leq\frac{C_{i(f_m(M)+1),M}}{f_m(M)^2}\sum_{j=1}^{\tilde{N}}\sup_{z\in\mathcal{C}_M}|r_j(z)-s_j(z)|\nonumber \\
&= \frac{C_{i(f_m(M)+1),M}}{f_m(M)^2}\EE\tilde{N}\EE\sup_{z\in\mathcal{C}_M}|r_1(z)-s_1(z)|, 
\end{align} where the final equality follows from Lemma \ref{lemma:CampbellsFormula}. Finally combining \eqref{eq:HatContraction} and \eqref{eq:TildeContration} gives \begin{equation}\label{eq:ArbContraction}
\EE\sup_{z\in\mathcal{C}_M}|\tilde{r}(z)-\tilde{s}(z)|\leq  \left(\frac{C'}{f_m(M)^{\ee/2}}+\frac{C_{i(f_m(M)+1),M}}{f_m(M)^2}\EE\tilde{N} \right)\EE\sup_{z\in\mathcal{C}_M}|r_1(z)-s_1(z)|.
\end{equation} Notice this coefficient is independent of the coupling and depends only on $M$, $f_m$ and $m$. From the definition of $C_{z,M}$ we have that $C_{z+i,M}/\Im(z)^2\rightarrow4$ as $\Im(z)\rightarrow\infty$. We also have that $\EE \tilde{N}=m\left((-\infty,-f_m(M)/2)\cup(f_m(M)/2,\infty) \right)\leq Cf_m(M)^{-\epsilon}$. We choose $f_m$ to be such that \begin{equation}\label{eq:A:f_m def}
\left(\frac{C'}{f_m(M)^{\ee/2}}+\frac{C_{i(f_m(M)+1),M}}{f_m(M)^2}\EE\tilde{N} \right)\leq\frac{4}{5},
\end{equation} for each $M\in\NN$. As the left hand side of \eqref{eq:A:f_m def} is decreasing in $f_m(M)$, $f_m$ may be chosen to be increasing and unbounded.

Next we remove that assumption that, for some $M$, almost surely $\mu_r$ and $\mu_s$ have half their mass in $[-M,M]$. For a positive, increasing, unbounded function $f_m$ on $\NN$, we define the function $d_{f_m}:\mathcal{P}(\mathcal{S})^2\rightarrow[0,\infty)$ by\begin{align}\label{eq:metric d}
	d_{f_m}(\mu_1,\mu_2)=\inf_{(r,s)\in\mathcal{C}(\mu_1,\mu_2)}\EE\sum_{M=1}^{\infty}\sup_{z\in\mathcal{C}_M}|r(z)-s(z)|/2^M, 
\end{align} and the function $d'_{f_m}:\mathcal{P}(\mathcal{S})^2\rightarrow[0,\infty)$ by \begin{align*}
	d'_{f_m}(\mu_1,\mu_2)=\inf_{(r,s)\in\mathcal{C}(\mu_1,\mu_2)}\EE\sum_{M=1}^{\infty}\sup_{z\in\mathcal{C}_M}|r(z)-s(z)|\oindicator{A_M}/2^M,
\end{align*} where $\oindicator{A_M}$ is the indicator function of the event \begin{equation*}
A_M=\{\mu_r([-M,M])\geq\frac{1}{2} \text{ and } \mu_s([-M,M])\geq \frac{1}{2} \},
\end{equation*} $\mathcal{C}_M$ is the set defined by \eqref{eq:Set definition}, and $\mathcal{C}(\mu_1,\mu_2)$ is the set of all couplings of $\mu_1$ and $\mu_2$ for $\mu_1,\mu_2\in \mathcal{P}(\mathcal{S})$. It is straightforward to check that $\rho:\mathcal{S}\times\mathcal{S}\rightarrow[0,\infty)$ defined by\begin{equation}
\rho(s,r)=\sum_{M=1}^{\infty}\sup_{z\in\mathcal{C}_M}|r(z)-s(z)|/2^M
\end{equation} is a metric on $\mathcal{S}$, and thus $d_{f_m}$ is the $1^{\text{st}}$-Wasserstein metric on $\mathcal{P}(\mathcal{S})$ (see \cite{DudleyBook} Chapter 11 for details). Let $\mu_1$ and $\mu_2$ be two fixed points of $T$. Let $(s,r)$ be a coupling of $\mu_1$ and $\mu_2$ such that \begin{equation}\label{eq:10/9 coupling}
\EE\sum_{M=1}^{\infty}\sup_{z\in\mathcal{C}_M}|r(z)-s(z)|\oindicator{A_M}/2^M\leq\frac{10}{9}d'_{f_m}(\mu_1,\mu_2),
\end{equation} and let $\tilde{s}$ and $\tilde{r}$ be built from i.i.d.\ copies of $(s,r)$ as in \eqref{eq:specificcoupling}. Using the specific coupling $(\tilde{s},\tilde{r})$,  \eqref{eq:ArbContraction}, and \eqref{eq:10/9 coupling} we get \begin{align*}
d'_{f_m}(\mu_1,\mu_2)&\leq\EE\sum_{M=1}^{\infty}\sup_{z\in\mathcal{C}_M}|\tilde{r}(z)-\tilde{s}(z)|\oindicator{A_M}/2^M\\
			   &\leq\frac{4}{5}\EE\sum_{M=1}^{\infty}\sup_{z\in\mathcal{C}_M}|r(z)-s(z)|\oindicator{A_M}/2^M\\
			   &\leq\frac{8}{9}d'_{f_m}(\mu_1,\mu_2),
\end{align*} and thus $d'_{f_m}(\mu_1,\mu_2)=0$.

If $d'_{f_m}$ was a metric it would be immediate that $\mu_1=\mu_2$, however it is not clear this is the case. The only property of a metric needed is that $d'_{f_m}$ separates distinct points in $\mathcal{P}(\mathcal{S})$, and thus we conclude the proof using the following lemma.

\begin{lemma}\label{lemma:d'=0iffd=0}
	Fix a positive, increasing, unbounded function $f_m:\NN\rightarrow\RR$. $d'_{f_m}(\mu_1,\mu_2)=0$ if and only if $d_{f_m}(\mu_1,\mu_2)=0$ for the metric $d_{f_m}$ defined in \eqref{eq:metric d}.
\end{lemma}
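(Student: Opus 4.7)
The forward direction $d_{f_m}(\mu_1,\mu_2)=0\Rightarrow d'_{f_m}(\mu_1,\mu_2)=0$ is immediate from the pointwise bound $\mathbf{1}_{A_M}\leq 1$, which gives $d'_{f_m}\leq d_{f_m}$; the real content of the lemma is the reverse implication. Since $d_{f_m}$ is genuinely a $1$-Wasserstein metric on $\mathcal{P}(\mathcal{S})$, proving $d'_{f_m}(\mu_1,\mu_2)=0\Rightarrow d_{f_m}(\mu_1,\mu_2)=0$ is equivalent to proving $d'_{f_m}(\mu_1,\mu_2)=0\Rightarrow \mu_1=\mu_2$. My plan is to take a sequence of couplings $(r_n,s_n)\in\mathcal{C}(\mu_1,\mu_2)$ along which $\EE\sum_{M}\sup_{z\in\mathcal{C}_M}|r_n-s_n|\mathbf{1}_{A_M^{(n)}}/2^M\to 0$, pass to a weak limit $(r,s)$ of the sequence of joint laws, and show this limit coupling is supported on the diagonal $\{r=s\}$, which forces $\mu_1=\mu_2$.

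First I would equip $\mathcal{S}$ with the topology of weak convergence of the underlying probability measures on $\RR$; under this topology $\mathcal{S}$ is Polish, and convergence in $\mathcal{S}$ is equivalent to uniform convergence of Stieltjes transforms on every compact subset of $\CC_+$ (by an Arzel\`a--Ascoli/equicontinuity argument). Since $\mu_1,\mu_2$ are fixed probability measures on the Polish space $\mathcal{S}$, each is tight, so the joint laws of $\{(r_n,s_n)\}$ are tight in $\mathcal{P}(\mathcal{S}\times\mathcal{S})$; Prokhorov gives a weakly convergent subsequence $(r_n,s_n)\Rightarrow(r,s)$ with marginals $\mu_1,\mu_2$, and Skorokhod's representation theorem realizes this convergence almost surely on a common probability space, so in particular $r_n\to r$ and $s_n\to s$ uniformly on each $\mathcal{C}_M$.

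The main technical obstacle is transferring the smallness of $|r_n-s_n|\mathbf{1}_{A_M^{(n)}}$ to the limit: the sets $A_M$ are only closed in $\mathcal{S}\times\mathcal{S}$, because $\mu\mapsto\mu([-M,M])$ is merely upper semi-continuous under weak convergence, so $\mathbf{1}_{A_M^{(n)}}$ need not converge to $\mathbf{1}_{A_M^{(\infty)}}$ along the Skorokhod sequence. To circumvent this I would introduce the open subset
\[ A_M^{\circ}:=\{\mu_r((-M,M))>1/2\}\cap\{\mu_s((-M,M))>1/2\}\subset A_M, \]
open because $\mu\mapsto\mu((-M,M))$ is lower semi-continuous. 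On the event $\{(r,s)\in A_M^{\circ}\}$, openness forces $(r_n,s_n)\in A_M^{\circ}\subset A_M$ eventually, so $\mathbf{1}_{A_M^{(n)}}=1$ eventually; combining with a further subsequence along which $\sup_{z\in\mathcal{C}_M}|r_n-s_n|\mathbf{1}_{A_M^{(n)}}\to 0$ almost surely, and with the uniform-on-$\mathcal{C}_M$ convergence $r_n\to r$, $s_n\to s$, I obtain $\sup_{z\in\mathcal{C}_M}|r(z)-s(z)|=0$ on $A_M^{\circ}$.

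To finish, $\mathcal{C}_M$ has non-empty interior in $\CC_+$, so the identity principle for holomorphic functions upgrades $r=s$ on $\mathcal{C}_M$ to $r\equiv s$ on all of $\CC_+$ on the event $A_M^{\circ}$, and by uniqueness of the Stieltjes transform $\mu_r=\mu_s$ there. Since $\mu_r((-M,M))\uparrow 1$ and $\mu_s((-M,M))\uparrow 1$ as $M\to\infty$ for any probability measures $\mu_r,\mu_s$ on $\RR$, we have $\bigcup_{M\geq 1} A_M^{\circ}=\mathcal{S}\times\mathcal{S}$; hence $\mu_r=\mu_s$ almost surely under the limit coupling, the coupling sits on the diagonal, and $\mu_1=\mu_2$, which completes the argument.
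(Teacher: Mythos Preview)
Your proof is correct, and it takes a genuinely different route from the paper's.

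The paper argues directly that $d_{f_m}(\mu_1,\mu_2)\leq C\varepsilon$ for every $\varepsilon>0$: it fixes $M_0$ with $\PP(A_{M_0})\geq 1-\varepsilon$, takes a sequence of couplings $(r_n,s_n)$ along which $\EE\sup_{z\in\mathcal{C}_{M_0}}|r_n-s_n|\mathbf{1}_{A_{M_0}^n}\to 0$, then passes to a subsequence and applies Vitali's convergence theorem (Lemma~\ref{lemma:VTC}) to extend the vanishing from $\mathcal{C}_{M_0}$ to the finite union $\tilde{\mathcal{C}}=\bigcup_{j=1}^{M_1}\mathcal{C}_j$. The unboundedness of $f_m$ is used to control the tail $M\geq M_1$ uniformly by $\varepsilon$, and the complement $A_{M_0}^c$ is handled crudely by the deterministic bound $|r-s|\leq 2/f_m(1)$. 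The output is an explicit bound $d_{f_m}(\mu_1,\mu_2)\leq(1+2/f_m(1))\varepsilon$.

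Your argument instead shows $\mu_1=\mu_2$ by extracting, via Prokhorov and Skorokhod, a subsequential limit coupling $(r,s)$ and showing it is diagonal. The key device is replacing the closed event $A_M$ by the open $A_M^{\circ}$, so that lower semicontinuity of $\mu\mapsto\mu((-M,M))$ forces $\mathbf{1}_{A_M^{(n)}}=1$ eventually on $\{(r,s)\in A_M^{\circ}\}$; the identity principle then gives $r\equiv s$ there, and $\bigcup_M A_M^{\circ}=\mathcal{S}\times\mathcal{S}$ finishes. This is more abstract but also cleaner: it does not use the specific geometry of the boxes $\mathcal{C}_M$ beyond their having nonempty interior, and in particular never invokes the unboundedness of $f_m$. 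One small point worth making explicit is that the per-$M$ ``further subsequence'' for almost-sure convergence of $\sup_{\mathcal{C}_M}|r_n-s_n|\mathbf{1}_{A_M^{(n)}}$ can be arranged (by a diagonal argument) to be a single subsequence working for all $M$ simultaneously; alternatively, since the conclusion $\PP((r,s)\in A_M^{\circ},\,r\neq s)=0$ is a statement about the fixed limit law, arguing one $M$ at a time is also fine.
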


\begin{proof}

Assume $d'_{f_m}(\mu_1,\mu_2)=0$, fix $\ee>0$, and note there exists $M_0\in\NN$ such that for any $M\geq M_0$ and any coupling $(r,s)$ one has \begin{equation}\label{eq:A_Mlarge}
\PP(A_M)\geq 1-\ee.
\end{equation} We have that \begin{equation}
\inf_{(r,s)\in\mathcal{C}(\mu_1,\mu_2)}\EE\sup_{z\in\mathcal{C}_{M_0}}|r(z)-s(z)|\oindicator{A_{M_0}}=0,
\end{equation} and thus we can find a sequence of couplings $\{(r_n,s_n)\}_{n=1}^\infty$ such that \begin{equation}\label{eq:First set convergence}
\lim\limits_{n\rightarrow\infty}\EE\sup_{z\in\mathcal{C}_{M_0}}|r_n(z)-s_n(z)|\oindicator{A_{M_0}^n}=0,
\end{equation} where $A_{M_0}^n=\{\mu_r^n([-M,M])\geq\frac{1}{2} \text{ and } \mu_s^n([-M,M])\geq \frac{1}{2} \}$ and $\mu_r^n$ and $\mu_s^n$ are the random probability measures associated to $r_n$ and $s_n$. Let $M_1$ be such that $f_m(M_1)>\frac{1}{2\eps}$, and hence for any $M\geq M_1$ \begin{equation}\label{eq:High sets don't matter}
\sup_{z\in\mathcal{C}_M}|r(z)-s(z)|\leq \eps,
\end{equation} for any Stieltjes transforms $r$ and $s$. 

We will now extend the convergence in \eqref{eq:First set convergence} to the supremum over the larger compact set $\tilde{\mathcal{C}}=\cup_{j=1}^{M_1}\mathcal{C}_j$. The $L^{1}$-convergence of the random variables $\sup_{z\in\mathcal{C}_{M_0}}|r_n(z)-s_n(z)|\oindicator{A_{M_0}^n}$ in \eqref{eq:First set convergence} to zero implies convergence in probability to zero. Thus we can find a subsequence converging almost surely to zero, and without loss of generality we denote this subsequence $\{\sup_{z\in\mathcal{C}_{M_0}}|r_n(z)-s_n(z)|\oindicator{A_{M_0}^n}\}_{n=1}^\infty$. Let $G=\{\lim_{n\rightarrow\infty}\sup_{z\in\mathcal{C}_{M_0}}|r_n(z)-s_n(z)|\oindicator{A_{M_0}^n}=0\}$, and decompose $G$ into $G_1=\{\omega\in G: \oindicator{A_{M_0}^n}(\omega)=1 \text{ i.o.}\}$ and $G_0=G\setminus G_1$. Clearly on $G_0$ the random variables $\sup_{z\in\tilde{\mathcal{C}}}|r_n(z)-s_n(z)|\oindicator{A_{M_0}^n}$ are eventually identically $0$. For  $\omega\in G_1$ we consider the further subsequence $\{n_k\}$ such that $\oindicator{A_{M_0}^{n_k}}(\omega)=1$ for all $k$. For this outcome $\omega$ we have $\{(r_{n_k}-s_{n_k})(\omega)\}_{n=1}^\infty$ is a sequence of complex analytic functions on $\CC_+$, uniformly bounded on compact subsets of $\CC_+$, converging uniformly to $0$ on a set with an accumulation point. Thus applying the Vitali convergence theorem for analytic functions, Lemma \ref{lemma:VTC}, we get that $\sup_{z\in\tilde{\mathcal{C}}}|(r_{n_k}(z)-s_{n_k}(z))|(\omega)\rightarrow 0$ as $n_k\rightarrow\infty$. From the above and the bounded convergence theorem we get \begin{equation}\label{eq:ConvonAllsets}
\lim\limits_{n\rightarrow\infty}\EE\sup_{z\in\tilde{\mathcal{C}}}|r_n(z)-s_n(z)|\oindicator{A_{M_0}^n}=0.
\end{equation} Combining \eqref{eq:A_Mlarge}, \eqref{eq:High sets don't matter}, and \eqref{eq:ConvonAllsets}, we obtain \begin{align*}
d_{f_m}(\mu_1,\mu_2)&=\inf_{(r,s)\in\mathcal{C}(\mu_1,\mu_2)}\EE\sum_{M=1}^{\infty}\sup_{z\in\mathcal{C}_M}|r(z)-s(z)|/2^M\\
&\leq\inf_{(r,s)\in\mathcal{C}(\mu_1,\mu_2)}\EE\sum_{M=1}^{\infty}\left(\sup_{z\in\mathcal{C}_M}|r(z)-s(z)|\oindicator{A_{M_0}}/2^M+\frac{2}{f_m(1)}\PP(A_{M_0}^c)/2^M\right)\\
&<\inf_{(r,s)\in\mathcal{C}(\mu_1,\mu_2)}\EE\sum_{M=1}^{M_1}\sup_{z\in\tilde{\mathcal{C}}}|r(z)-s(z)|\oindicator{A_{M_0}}/2^M+\ee+\frac{2}{f_m(1)}\eps\\
&\leq\inf_{(r,s)\in\mathcal{C}(\mu_1,\mu_2)}\EE\sup_{z\in\tilde{\mathcal{C}}}|r(z)-s(z)|\oindicator{A_{M_0}}+\left(1+\frac{2}{f_m(1)}\right)\ee\\
&=\left(1+\frac{2}{f_m(1)}\right)\ee.
\end{align*} As $\ee>0$ was arbitrary we have $d_{f_m}(\mu_1,\mu_2)=0$. For the other direction note \begin{align*}
d'_{f_m}(\mu_1,\mu_2)&=\inf_{(r,s)\in\mathcal{C}(\mu_1,\mu_2)}\EE\sum_{M=1}^{\infty}\sup_{z\in\mathcal{C}_M}|r(z)-s(z)|\oindicator{A_M}/2^M\\
			   &\leq\inf_{(r,s)\in\mathcal{C}(\mu_1,\mu_2)}\EE\sum_{M=1}^{\infty}\sup_{z\in\mathcal{C}_M}|r(z)-s(z)|/2^M\\
			   &=d_{f_m}(\mu_1,\mu_2),
\end{align*} for any $\mu_1,\mu_2$.
\end{proof}



\appendix

\section{Tightness of $\mu_{L_n}$} 

\begin{lemma}\label{lemma:tightness}
	Let $\{A_n\}_{n\geq 1}$ be a L\'evy-Khintchine random matrix ensemble with characteristics $(0,b,m)$ satisfying \textbf{C1} and $L_n$ be the matrix defined by \eqref{eq:GenLaplacianDef}. Then there exists $r>0$ such that almost surely \begin{equation*}
	\limsup\limits_{n\rightarrow\infty}\int_\RR |t|^rd\mu_{L_n}(t)<\infty,
	\end{equation*} and thus almost surely $\{\mu_{L_n}\}_{n\geq 1}$ is tight.
\end{lemma}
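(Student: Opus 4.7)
The plan is to show, for some small $r \in (0, \epsilon \wedge 1)$ with $\epsilon$ from Condition \textbf{C1}, that $\mathbb{E}\int|t|^r d\mu_{L_n}(t) = O(1)$, and then upgrade to the claimed almost sure limsup bound via concentration. The key tool is Rotfeld's trace inequality: for self-adjoint matrices $M_1,M_2$ and $r \in (0,1]$, $\mathrm{tr}|M_1+M_2|^r \leq \mathrm{tr}|M_1|^r + \mathrm{tr}|M_2|^r$. Truncate entries at level $1$: write $A_n = A_n' + A_n''$ with $A_n' = (A_{ij}^{(n)}\oindicator{|A_{ij}^{(n)}|\leq 1})_{ij}$, and let $L_n'$, $L_n''$ be the corresponding Laplacians. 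Since $L_n = L_n' + L_n''$, Rotfeld gives $\mathrm{tr}|L_n|^r \leq \mathrm{tr}|L_n'|^r + \mathrm{tr}|L_n''|^r$, and it suffices to control each piece.

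The large-entry Laplacian decomposes as $L_n'' = \sum_{i<j} A_{ij}^{(n)}\oindicator{|A_{ij}^{(n)}|>1}\bigl(E_{ij} - e_ie_i^\top - e_je_j^\top\bigr)$, a sum of rank-$\leq 2$ symmetric matrices each with unique nonzero eigenvalue $-2A_{ij}^{(n)}\oindicator{|A_{ij}^{(n)}|>1}$. Iterating Rotfeld gives $\mathrm{tr}|L_n''|^r \leq 2^r\sum_{i<j}|A_{ij}^{(n)}|^r\oindicator{|A_{ij}^{(n)}|>1}$; layer-cake integration with \eqref{eq:Entry Tail Assumption} (requiring $r < \epsilon$) then yields $\mathbb{E}\frac{1}{n}\mathrm{tr}|L_n''|^r = O(1)$. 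For the bounded-entry piece, apply H\"older to get $\mathrm{tr}|L_n'|^r \leq n^{1-r/2}\|L_n'\|_F^r$, and aim to show $\mathbb{E}\|L_n'\|_F^2 = O(n)$. Expanding $\|L_n'\|_F^2 = 2\sum_{i<j}(A_{ij}^{(n)})^2\oindicator{|A_{ij}^{(n)}|\leq 1} + \sum_i (D_{ii}')^2$, this reduces to the two uniform bounds $n\mathbb{E}(A_{12}^{(n)})^2\oindicator{|A_{12}^{(n)}|\leq 1} = O(1)$ and $\mathbb{E}(D_{11}')^2 = O(1)$. The former follows from the square-integrability condition \eqref{eq:squaresumcond} and the L\'evy-Khintchine convergence of the truncated row second moments to $\int_{|x|\leq 1}x^2 dm(x)$, while the latter follows from convergence of $D_{11}'$ in distribution to the purely non-Gaussian infinitely divisible limit, which has finite second moment via \eqref{eq:MeasureDecayAssumption}; uniformity in $n$ in both comes from \eqref{eq:Entry Tail Assumption}. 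Jensen's inequality then gives $\mathbb{E}\|L_n'\|_F^r = O(n^{r/2})$, hence $\mathbb{E}\frac{1}{n}\mathrm{tr}|L_n'|^r = O(1)$.

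To upgrade the $L^1$ bound to the almost sure statement, apply Efron--Stein concentration to the exchangeable off-diagonal entries: resampling a single entry $A_{ij}^{(n)}$ perturbs $L_n$ by a rank-$\leq 2$ Laplacian block $\Delta_{ij}$, and Rotfeld (applied in both directions) gives $|\mathrm{tr}|L_n|^r - \mathrm{tr}|L_n^{(ij)}|^r| \leq \mathrm{tr}|\Delta_{ij}|^r \leq 2^r|A_{ij}^{(n)} - A_{ij}'|^r$. Shrinking $r$ slightly so that $2r < \epsilon$ lets \eqref{eq:Entry Tail Assumption} give $\mathrm{Var}\bigl(\frac{1}{n}\mathrm{tr}|L_n|^r\bigr) = O(1/n)$; combined with the mean bound, Chebyshev plus Borel--Cantelli along a polynomial subsequence, together with interpolation via the monotonicity of the truncation-based bounds, yield $\limsup_n \int|t|^r d\mu_{L_n} < \infty$ almost surely. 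The main technical obstacle is the uniform-in-$n$ bound $n\mathbb{E}(A_{12}^{(n)})^2\oindicator{|A_{12}^{(n)}|\leq 1} = O(1)$ needed for the H\"older step: although the L\'evy-Khintchine convergence identifies the limit correctly, extracting uniformity requires combining it with the row-wise uniform integrability built into \textbf{C1}, since \eqref{eq:Entry Tail Assumption} only controls the tail for $t > 1/4$ and the small-$t$ contribution must be handled via the convergence of $\sum_j (A_{1j}^{(n)})^2\oindicator{|A_{1j}^{(n)}|\leq 1}$.
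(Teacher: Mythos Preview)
Your route to the expectation bound $\mathbb{E}\!\int|t|^r d\mu_{L_n}=O(1)$ is a genuine and clean alternative to the paper's.  The paper splits $L_n=A_n-D_n$ via the singular-value inequality $s_{i+j-1}(A+B)\le s_i(A)+s_j(B)$, then handles $A_n$ with a Schatten bound and $D_n$ directly; you instead split entrywise and use Rotfeld, which lets you dispatch the large-entry Laplacian as a sum of rank-one edge blocks and the small-entry Laplacian via H\"older to the Frobenius norm.  Both decompositions lead to the same kind of row-sum moment questions.  (A small remark: the ``technical obstacle'' you flag at the end is not one --- Proposition~\ref{prop:IDconv} gives $n\mathbb{E}[(A_{12}^{(n)})^2\oindicator{|A_{12}^{(n)}|\le h}]\to\int_{|x|\le h}x^2\,dm<\infty$, and a convergent sequence is bounded; the tail assumption~\eqref{eq:Entry Tail Assumption} is not what provides this uniformity.)

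The gap is in the upgrade from $L^1$ to almost sure.  Your Efron--Stein computation bounds $\mathrm{Var}\bigl(\tfrac1n\mathrm{tr}|L_n|^r\bigr)$ by $C\,\mathbb{E}|A_{12}^{(n)}|^{2r}$, and you then assert this is $O(1/n)$ via~\eqref{eq:Entry Tail Assumption}.  But~\eqref{eq:Entry Tail Assumption} only controls $\mathbb{P}(|A_{12}^{(n)}|>t)$ for $t>1/4$; the contribution from $\{|A_{12}^{(n)}|\le 1/4\}$ is untouched, and for $2r<1$ one has $|x|^{2r}\ge |x|^2$ on $[0,1]$, so the truncated second moment bound cannot dominate it.  Optimizing between the two pieces gives only $\mathbb{E}|A_{12}^{(n)}|^{2r}=O(n^{-r})$, hence $\mathrm{Var}=O(n^{-r})$.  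This still feeds Borel--Cantelli along a sparse subsequence, but then your ``interpolation via monotonicity'' does not go through: the matrices $L_n$ for different $n$ come from genuinely different entry distributions (Definition~\ref{def:LevyKhintchine} allows $A_{12}^{(n)}$ to change with $n$), so there is no natural coupling and no monotone quantity linking $\mu_{L_n}$ across $n$.  The paper sidesteps this by reducing to row functionals $Y_{n,k}=(\sum_j|A_{kj}^{(n)}|)^r$, proving the stronger uniform bound $\sup_n\mathbb{E}Y_{n,1}^4<\infty$ (splitting into small entries, handled by Jensen with $4r<1$, and large entries, bounded by $N_nM_n$ where $N_n$ counts large entries and $M_n$ is the row maximum), and then invoking Lemma~B.1 of~\cite{SymHeavy}, which exploits the weak dependence between rows (any two $Y_{n,k},Y_{n,l}$ share a single entry) to obtain the almost-sure bound directly without interpolation.
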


\begin{proof}
	This is essentially an extension of the argument in Lemma B.3 of \cite{SymHeavy} to L\'evy-Khintchine matrices and with the matrix $-D_n$ added. Applying Lemma \ref{Basics} and noting $|\lambda_i(A)|=s(A)$ for some singular value of a Hermitian matrix $A$ one gets for any $0\leq k\leq n-1$\begin{equation*}
	|\lambda_{1+k}(L_n)|\leq|\lambda_{1+\lfloor k/2\rfloor}(A_n)|+|\lambda_{1+\lceil k/2\rceil}(-D_n)|.
	\end{equation*} Thus \begin{equation}
	\int_\RR |t|^rd\mu_{L_n}(t)\leq 8\left(\int_\RR |t|^rd\mu_{A_n}(t)+\int_\RR |t|^rd\mu_{D_n}(t) \right).
	\end{equation} $D_n$ is a diagonal matrix so \begin{equation}
	\int_\RR|t|^rd\mu_{D_n}(t)=\frac{1}{n}\sum_{k =1}^n\left|\sum_{j\in\{1,\dots,n\}\setminus\{k\}}A_{kj}^{(n)} \right|^r\leq\frac{1}{n}\sum_{k =1}^n\left|\sum_{j\in\{1,\dots,n\}\setminus\{k\}}|A_{kj}^{(n)}| \right|^r.
	\end{equation} Assuming $0\leq r\leq 2$ and applying the Schatten Bound, Lemma \ref{Schattent}, to $A_n$ we get \begin{equation}
	\int_\RR|t|^rd\mu_{A_n}(t)\leq\frac{1}{n}\sum_{k =1}^n\left|\sum_{j\in\{1,\dots,n\}\setminus\{k\}}|A_{kj}^{(n)}|^2 \right|^{r/2}.
	\end{equation} We will prove that almost surely \begin{equation}
	\limsup\limits_{n\rightarrow\infty}\int_\RR|t|^rd\mu_{D_n}(t)<\infty.
	\end{equation} The proof for $\mu_{A_n}$ follows with only minor changes. Both follow the arguments of Lemma B.1 in \cite{SymHeavy}. Define the random variable $Y_{n,k}$ by \begin{equation*}
	Y_{n,k}:=\left|\sum_{j\in\{1,\dots,n\}\setminus\{k\}}|A_{kj}^{(n)}| \right|^r,
	\end{equation*} for all $k\in\{1,\dots,n\}$. From the proof of Lemma B.1 in \cite{SymHeavy} we see it is enough to show \begin{equation}
	\sup_{n\geq 1}\EE Y_{n,1}^4<\infty.
	\end{equation} Define for any $0\leq a<b$\begin{equation*}
	S_{n,a,b}:=\sum_{j=2}^n |A_{1j}^{(n)}|\indicator{|A_{1j}^{(n)}|\in[a,b)}.
	\end{equation*} Then $Y^4_{n,1}=S_{n,0,\infty}^{4r}=(S_{n,0,1/2}+S_{n,1/2,\infty})^{4r}$ and \begin{equation*}
	\EE Y^4_{n,1}\leq 2^{4r-1}\left(\EE S_{n,0,1/2}^{4r}+\EE S_{n,1/2,\infty}^{4r} \right).
	\end{equation*} If we further assume $4r<1$ we can apply Jensen's inequality to get\begin{equation*}
	\EE S_{n,0,1/2}^{4r}\leq (\EE S_{n,0,1/2})^{4r}.
	\end{equation*} Applying Proposition \ref{prop:IDconv} to the triangular array $\{|A_{1j}^{(n)}|\}$ we get that \begin{equation*}
	(\EE S_{n,0,1/2})^{4r}\rightarrow\hat{b}_{1/2}.
	\end{equation*} as $n\rightarrow\infty$. For the larger entries of the row\begin{equation*}
	\EE S_{n,1/2,\infty}^{4r}\leq\EE[(N_nM_n)^{4r}]
	\end{equation*} where 
	$$M_n:=\max_{2\leq j\leq n}|A_{1j}^{(n)}|,$$
	and 
	$$N_n:=|\{2\leq j\leq n: |A_{1j}^{(n)}|\geq1/2 \}|,$$
	where $|G|$ is the cardinality of a set $G$. Again, using Proposition \ref{prop:IDconv} we get for sufficiently large $n$ and any $k\in\NN$\begin{align*}
	\PP(N_n= k)&\leq {n\choose k}\PP(|A_{12}^{(n)}|\geq1/2)^k\\
				  &=\frac{{n\choose k} }{n^k}\left(n\PP(|A_{12}^{(n)}|\geq 1/2) \right)^k\\
				  &\leq\frac{{n\choose k} }{n^k}c^k\\
				  &\leq\frac{c^k}{k!}
	\end{align*} where $c=m((-\infty,1/2]\cup[1/2,\infty))+1$, and we see\begin{align*}
	\PP(N_n\geq k)\leq e^c-\sum_{j=1}^{k-1}\frac{c^j}{j!}\leq\frac{Cc^k}{k!}.
\end{align*} Thus $\sup_{n\geq 1}\EE N_n^{\eta}<\infty$ for any $\eta>0$. For $M_n$ note that from \eqref{eq:Entry Tail Assumption}\begin{equation*}
	\PP(M_n\geq t)\leq n\PP(|A_{12}^{(n)}|\geq t)\leq Ct^{-\ee}.
	\end{equation*} It follows that $\sup_{n\geq 1}\EE M_n^{\gamma}<\infty$ for any $0<\gamma<\ee$. Returning to $S_{n,1/2,\infty}$ and applying H\"older's inequality we get \begin{equation*}
	\sup_{n\geq 1}\EE S_{n,1/2,\infty}^{4r}\leq\sup_{n\geq 1}\EE[(N_nM_n)^{4r}]\leq\sup_{n\geq 1}(\EE N_n^{4rp})^{1/p}(\EE M_n^{4rq})^{1/q}<\infty
	\end{equation*}for small enough $q$, completing the proof.
\end{proof}

\section{Additional lemmas}\label{sec:Appendix}

\begin{lemma}\label{lemma:Tau<1}
	Let $m$ be a measure on $\RR\setminus\{0\}$ such that \begin{equation}
		\int_{\RR}1\wedge |x|dm(x)<\infty,
	\end{equation} and let $\{y_k\}_{k= 1}^N$ be a Poisson point process with intensity measure $m$ where $N=\infty$ a.s.\ if $m(\RR\setminus\{0\})=\infty$. If $\tau_\kappa=\inf\left\{t\geq 1:\sum_{k =t+1}^N y_{(k)}^2\leq\kappa \right\}$ where $y_{(1)}\geq y_{(2)}\geq\dots$ is a non-increasing ordering of $\{y_k\}_{k= 1}^N$, then $\EE\tau_\kappa<\infty$ for all $\kappa>0$ and $\EE\tau_\kappa\rightarrow0$ as $\kappa\rightarrow\infty$.
\end{lemma}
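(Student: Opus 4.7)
The plan is to obtain a summable tail estimate on $\PP(\tau_\kappa > t)$ by splitting the point process at a scale $\epsilon = \epsilon_t$, then deduce the limit $\EE\tau_\kappa \to 0$ by dominated convergence. The key auxiliary variable is
\[
S := \sum_{k:\,|y_k|\leq 1}|y_k|.
\]
By Campbell's formula (Lemma \ref{lemma:CampbellsFormula}) and the hypothesis $\int(1\wedge|x|)\,dm(x)<\infty$,
\[
\EE e^{\theta S} \;=\; \exp\Bigl(\int_{|x|\leq 1}(e^{\theta|x|}-1)\,dm(x)\Bigr) \;\leq\; \exp\Bigl(\theta e^\theta \int(1\wedge|x|)\,dm\Bigr)<\infty
\]
for every $\theta\geq 0$, using $e^{\theta|x|}-1\leq \theta|x|e^\theta$ on $\{|x|\leq 1\}$. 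In particular $\sum_k y_k^2 \leq S+\sum_{|y_k|>1}y_k^2<\infty$ almost surely, so $\tau_\kappa$ is a.s.\ finite.

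For the moment bound, write $S_t := \sum_{k>t}y_{(k)}^2$, so $\{\tau_\kappa > t\}=\{S_t>\kappa\}$. Fix $\epsilon\in(0,1]$ and let $N_\epsilon := \#\{k:|y_k|>\epsilon\}$, which is Poisson with mean $\mu_\epsilon = m\{|x|>\epsilon\}\leq \epsilon^{-1}\int(1\wedge|x|)\,dm =: C/\epsilon$. On $\{N_\epsilon \leq t\}$ the $t$ largest atoms (in absolute value) exhaust every point of magnitude exceeding $\epsilon$, so
\[
S_t \;\leq\; \sum_{k:\,|y_k|\leq\epsilon}y_k^2 \;\leq\; \epsilon\sum_{k:\,|y_k|\leq 1}|y_k| \;=\; \epsilon S,
\]
and hence $\PP(\tau_\kappa > t)\leq \PP(N_\epsilon > t)+\PP(S>\kappa/\epsilon)$. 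Choose $\epsilon_t := (\log t)/t$ for $t\geq t_0$. A Poisson-Chernoff bound with $\mu_{\epsilon_t}\leq Ct/\log t = o(t)$ gives $\PP(N_{\epsilon_t}>t)\leq e^{-c\,t\log\log t}$, while the exponential moment estimate from the first step yields $\PP(S>\kappa/\epsilon_t)\leq C_1 e^{-\kappa t/\log t}$. Both are summable in $t$, so $\EE\tau_\kappa = \sum_{t\geq 0}\PP(\tau_\kappa>t)<\infty$ for every $\kappa>0$.

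For the limit as $\kappa\to\infty$, note that $\kappa\mapsto\tau_\kappa$ is a.s.\ non-increasing, and since $\sum_k y_k^2<\infty$ a.s., $\tau_\kappa\downarrow 0$ almost surely. Fixing any $\kappa_0>0$, the dominator $\tau_{\kappa_0}$ has finite expectation by the previous step, so dominated convergence yields $\EE\tau_\kappa\to 0$. The delicate step is the calibration of $\epsilon_t$: the Poisson-Chernoff piece pushes $\epsilon_t\gtrsim 1/t$ (so that the Poisson mean remains $o(t)$), while suppressing $\PP(S>\kappa/\epsilon_t)$ prefers $\epsilon_t$ as small as possible. It is precisely the upgrade from $\int(1\wedge x^2)\,dm<\infty$ (the baseline L\'evy--Khintchine assumption) to $\int(1\wedge|x|)\,dm<\infty$ that promotes $S$ from square-integrable to exponentially integrable, creating enough slack for a logarithmic correction such as $\epsilon_t=(\log t)/t$ to balance the two bounds.
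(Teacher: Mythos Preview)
Your proof is correct and follows essentially the same strategy as the paper: split the process at a shrinking scale, control the number of large atoms by a Poisson--Chernoff bound, and control the contribution of small atoms by an exponential-moment estimate coming from Campbell's formula. The only cosmetic differences are that the paper takes the cutoff $r_t=c/t$ and applies Campbell directly to $\sum_{|y_k|\le r_t}y_k^2$ with a $t$-dependent Chernoff parameter, whereas you work with the single variable $S=\sum_{|y_k|\le1}|y_k|$ (using $\sum_{|y_k|\le\epsilon}y_k^2\le\epsilon S$) and compensate with the slightly larger cutoff $\epsilon_t=(\log t)/t$; your dominated-convergence argument for $\EE\tau_\kappa\to0$ is also a bit more explicit than the paper's.
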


\begin{proof}
	The fact that $\tau_\kappa$ is almost surely finite follows from the integrability condition on $m$. Additionally $\PP(\tau_\kappa=0)=\PP\left(\sum_{k =1}^N y_k^2\leq\kappa\right)$ and clearly converges to $1$ as $\kappa\rightarrow\infty$. Thus it is sufficient to prove $\EE\tau_\kappa<\infty$ for all $\kappa> 0$. Let $0<r_t<1$ be some monotonically decreasing function of $t$ such that $r_t\rightarrow0$ as $t\rightarrow\infty$, $S_t=\sum_{k=1}^N y_k^2\indicator{|y_k|\leq r_t}$, and $\Pi=\sum_{k =1}^N\delta_{y_j}$. Define the event $A_t=\{\Pi((-\infty,-r_t)\cup(r_t,\infty))\geq t\}$. On $A_t^c$ the collection of points summed over in the definition of $\tau_\kappa$ is a strict subset of the collection of points summed over in the definition of $S_t$.  Then\begin{align*}
		\{\tau_\kappa>t\}&\subseteq\left(\{\tau_\kappa>t\}\cap A_t^c\right)\cup\left(\{\tau_\kappa>t\}\cap A_t\right) \\
						 &\subseteq\left(\{S_t\geq\kappa\}\cap A_t^c\right)\cup A_t \\
						 &\subseteq\{S_t\geq\kappa\}\cup A_t.
	\end{align*} We will now show for appropriate $r_t$ the probabilities of the events $\{S_t\geq\kappa\}\cup\{\Pi((-\infty,-r_t)\cup(r_t,\infty))\geq t\}$ are summable in $t\in\NN$.

	For a Poisson random variable $X$ with mean $\lambda$, $\PP(X\geq t)\leq\exp(-t\log(\frac{t}{\lambda e}))$. Letting $E_t=\EE\Pi((-\infty,-r_t)\cup(r_t,\infty))=m((-\infty,-r_t)\cup(r_t,\infty))<\infty$ we have\begin{equation}
		\PP\left(\Pi((-\infty,-r_t)\cup(r_t,\infty))\geq t\right)\leq\exp\left(-t\log\left(\frac{t}{E_te}\right)\right).
	\end{equation} For the event $\{S_t\geq\kappa\}$ notice that \begin{equation*}
	\PP(S_t\geq\kappa)\leq\exp(-\theta\kappa)\EE\exp(\theta S_t),
\end{equation*} and from Campbell's Formula, Lemma \ref{lemma:CampbellsFormula}, \begin{align*}
\EE\exp(\theta S_t)&=\exp\left(\int_{-r_t}^{r_t}(e^{\theta x^2}-1)dm(x) \right)\\
				   &\leq C\exp\left(\theta\int_{-r_t}^{r_t}x^2dm(x) \right).
\end{align*} Letting $\theta=\theta_t =\left(\int_{-r_t}^{r_t}x^2dm(x) \right)^{-1}$ the above gives us\begin{equation}\label{eq:taubound}
\PP(\tau_\kappa>t)\leq\exp\left(-t\log\left(\frac{t}{E_te}\right)\right)+C'\exp(-\theta_t\kappa).
\end{equation} Notice the integrability assumption on $m$ implies\begin{equation}\label{eq:m not too big}
	\ee m\left((-\infty,-\ee)\cup(\ee,\infty) \right)\leq C
\end{equation}  for all $\ee\in(0,1)$ and some constant $C$. From the integrability assumption we also have that\begin{equation}
\int_{-1}^1 |x|\,dm(x)=I
\end{equation} for some $0<I<\infty$. From this we see that\begin{equation}
\int_{-r_t}^{r_t}x^2\,dm(x)\leq r_tI,
\end{equation} and $\theta_t\geq (Ir_t)^{-1}$. Taking $r_t=\frac{c}{t}$ for some $c>0$ gives $\theta_t\geq (cI)^{-1}t$. For an appropriate choice of $c>0$ \eqref{eq:m not too big} and the definition of $E_t$ imply that $E_t\leq t/2e$. Thus from \eqref{eq:taubound} we get \begin{equation}
\EE\tau_\kappa=\sum_{t=0}^{\infty}\PP(\tau_\kappa\geq t)<\infty.
\end{equation} This completes the proof.\end{proof}

\begin{lemma}[Campbell's Formula, Section 3.2 in\cite{Kingman}]\label{lemma:CampbellsFormula}
	Let $\Pi$ be a Poisson point process on a measurable space $(\mathbb{X},\mathcal{X})$ with intensity measure $m$. Let $u:\mathbb{X}\rightarrow\RR$ be a measurable function. Then\begin{equation}
		S=\int_\mathbb{X}u(x)d\Pi(x)<\infty
	\end{equation} if and only if\begin{equation}
	\int_\mathbb{X}\min(u(x),1)dm(x)<\infty.
\end{equation} If either of the above integrals are finite then\begin{equation}
\EE\exp(\theta S)=\exp\left(\int_Xe^{\theta u(x)-1}dm(x) \right)
\end{equation} for any $\theta\in\CC$ for which the integral on the right hand side is finite. Moreover,\begin{equation}
		\EE\int_\mathbb{X}u(x)d\Pi(x)=\int_\mathbb{X}u(x)dm(x),
\end{equation} whenever $u\geq 0$ or $\int |u(x)|dm(x)<\infty$.
\end{lemma}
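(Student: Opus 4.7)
The plan is to prove the three assertions of Campbell's formula in sequence, using only the defining properties of a Poisson point process $\Pi$ with intensity measure $m$: (a) $\Pi(A)$ is $\mathrm{Poisson}(m(A))$-distributed for every measurable $A \subset \mathbb{X}$ (with the convention $\Pi(A)=\infty$ a.s.\ when $m(A)=\infty$), and (b) counts on disjoint measurable sets are independent. The natural starting point is the Laplace transform identity. For a nonnegative simple function $u = \sum_{i=1}^k c_i \oindicator{A_i}$ with disjoint $A_i$ of finite $m$-measure, one has $S = \sum_i c_i \Pi(A_i)$, a linear combination of independent Poisson variables, and the moment generating function of a Poisson distribution gives directly
\begin{equation*}
\EE e^{\theta S} = \prod_{i=1}^k e^{m(A_i)(e^{\theta c_i}-1)} = \exp\!\left(\int_\mathbb{X}(e^{\theta u(x)}-1)\,dm(x)\right).
\end{equation*}
Extending to a general nonnegative measurable $u$ is a routine measure-theoretic step: pick simple $u_n \uparrow u$ and apply monotone convergence to both sides for $\theta \leq 0$, then extend to the complex strip where the right-hand side is finite by dominated convergence or analytic continuation.

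The mean identity then follows from the same monotone approximation applied directly to $\EE S = \sum_i c_i \EE \Pi(A_i) = \sum_i c_i m(A_i) = \int u\,dm$ on simple functions, or alternatively by differentiating the Laplace formula at $\theta = 0$. For signed $u$ with $\int|u|\,dm < \infty$, decompose $u = u^+ - u^-$ and use linearity. For the finiteness dichotomy, I would split $u = u\oindicator{|u|>1} + u\oindicator{|u|\leq 1}$. If $m(\{|u|>1\}) = \infty$, then a.s.\ $\Pi$ contains infinitely many points $x_k$ with $|u(x_k)| > 1$, so $\sum_k |u(x_k)| = \infty$ almost surely. Otherwise the first piece is a finite sum, and one analyzes the second piece via the mean formula applied to $|u|\oindicator{|u|\leq 1}$: this shows absolute convergence a.s.\ whenever $\int\min(|u|,1)\,dm < \infty$. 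For the converse, when $\int_{|u|\leq 1}|u|\,dm = \infty$, apply the Laplace identity to $|u|\oindicator{|u|\leq 1}$ with $\theta = -1$; because $e^{-t}-1 \leq -t/2$ on $[0,1]$, the integral in the exponent diverges to $-\infty$, so $\EE e^{-\sum |u(x_k)|\oindicator{|u(x_k)|\leq 1}} = 0$ and the sum is infinite almost surely.

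The main obstacle, such as it is, concerns the signed case of the finiteness criterion: $S = \int u\,d\Pi$ as a sum over a countable unordered point set is only unambiguously defined under absolute convergence, so one must either phrase the dichotomy in terms of $|u|$ (as above, which is the content of the lemma since $\min(|u|,1)$ controls absolute convergence) or invoke a $0$--$1$ law by decomposing $\mathbb{X}$ into countably many disjoint pieces of finite $m$-measure, observing that $\Pi$ restricted to these pieces is independent, and applying Kolmogorov's $0$--$1$ law to the tail event $\{S \text{ converges absolutely}\}$. Once this is settled, the Laplace formula extends to any $\theta \in \mathbb{C}$ for which the right-hand integral is finite, by dominated convergence against the absolutely convergent sum.
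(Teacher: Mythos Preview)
The paper does not supply its own proof of this lemma; it is simply quoted from Kingman's monograph (Section 3.2), so there is no argument in the paper to compare against. Your sketch is the standard proof and is essentially what one finds in Kingman: establish the Laplace functional first on nonnegative simple functions via the Poisson moment generating function and independence over disjoint sets, pass to general nonnegative $u$ by monotone convergence, derive the mean identity either by differentiation at $\theta=0$ or by the same approximation, and obtain the finiteness dichotomy by splitting at level $1$ and using the Laplace functional to force divergence when $\int\min(|u|,1)\,dm=\infty$. Your use of the elementary bound $e^{-t}-1\le -t/2$ on $[0,1]$ for the divergence direction is a clean way to handle that step.

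One minor remark: you correctly work with $\min(|u|,1)$ throughout, which is what the finiteness criterion must involve for the statement to make sense (the lemma as printed has $\min(u,1)$ and $e^{\theta u(x)-1}$ rather than $e^{\theta u(x)}-1$, both evident typos). Your observation that the sum $\int u\,d\Pi$ over an unordered countable set is only well-defined under absolute convergence is exactly right, and phrasing the dichotomy in terms of $|u|$ is the appropriate resolution.
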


For $0<h<1$ define 
$$\sigma^2_h:=\sigma^2+\int_{|x|\leq h}x^2dm(x),\quad\text{ and }\quad b_h:=b-\int_{h<|x| }\frac{x}{1+x^2}dm(x).$$

\begin{proposition}[Corollary 15.16 in \cite{Kal02}]\label{prop:IDconv}
	Suppose $\{X_{ni}:1\leq i\leq n\}_{n\geq 1}$ is a triangular array of random variables such that each row consists of i.i.d.\ random variables. Then the sum
	$$\sum_{i =1}^n X_{ni},$$
	converges in distribution to an infinitely divisible random variable with characteristic $(\sigma^2,b,m)$ as $n\rightarrow\infty$ if and only if for every $0<h<1$ which is not an atom of $m$\begin{itemize}
		\item $n\PP(X_{n1}\in\cdot)\Rightarrow m(\cdot)$ on $\overline{\RR}\setminus\{0\}$,
		\item $n\EE\left[X_{n1}^2\indicator{|X_{n1}|\leq h}\right]\rightarrow\sigma_h^2$, and
		\item $n\EE\left[X_{n1}\indicator{|X_{n1}|\leq h} \right]\rightarrow b_h$,
	\end{itemize} as $n\rightarrow\infty$.
\end{proposition}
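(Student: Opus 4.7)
The plan is to work entirely with characteristic functions of the row sums $S_n = \sum_{i=1}^n X_{ni}$. Writing $\varphi_n(t) = \E e^{itX_{n1}}$, we have $\E e^{itS_n} = \varphi_n(t)^n$. The first step is to observe that any non-degenerate limit forces the array to be infinitesimal, i.e.\ $\varphi_n(t) \to 1$ uniformly on compact sets, so that one can legitimately take the principal branch of $\log$ and use $\log(1+z) = z + O(z^2)$. This reduces the limiting characteristic exponent to the analysis of
\begin{equation*}
n(\varphi_n(t) - 1) \;=\; n\,\E\!\left[e^{itX_{n1}} - 1\right].
\end{equation*}

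For the sufficiency direction, I would fix $0 < h < 1$ that is not an atom of $m$ and split the expectation as
\begin{equation*}
n\E[e^{itX_{n1}} - 1] \;=\; n\E\!\left[(e^{itX_{n1}} - 1)\mathbf{1}_{|X_{n1}|\le h}\right] + n\E\!\left[(e^{itX_{n1}} - 1)\mathbf{1}_{|X_{n1}|>h}\right].
\end{equation*}
On $\{|X_{n1}| \le h\}$, Taylor-expand $e^{itx} - 1 = itx - \tfrac{t^2 x^2}{2} + O(|tx|^3)$: the first two terms are controlled by the hypothesized convergences $n\E[X_{n1}\mathbf{1}_{|X_{n1}|\le h}]\to b_h$ and $n\E[X_{n1}^2\mathbf{1}_{|X_{n1}|\le h}]\to\sigma_h^2$, while the cubic error is $O(h)$ uniformly in $n$ thanks to the bounded second-moment piece. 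On $\{|X_{n1}|>h\}$ the integrand $e^{itx}-1$ is bounded and continuous on $\overline{\RR}\setminus\{0\}$ restricted to $|x|>h$, so the vague convergence $n\P(X_{n1}\in\cdot)\Rightarrow m$ (valid since $h$ is a continuity point) gives
\begin{equation*}
n\E\!\left[(e^{itX_{n1}} - 1)\mathbf{1}_{|X_{n1}|>h}\right] \;\to\; \int_{|x|>h}(e^{itx}-1)\,dm(x).
\end{equation*}
Reassembling and then adding and subtracting $\int \tfrac{itx}{1+x^2}\mathbf{1}_{|x|>h}\,dm(x)$, which is absorbed into the identity $b = b_h + \int_{|x|>h}\tfrac{x}{1+x^2}\,dm(x)$, yields the Lévy-Khintchine exponent for each fixed $h$. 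Finally, letting $h\downarrow 0$ along non-atoms of $m$ and invoking \eqref{eq:squaresumcond} produces the claimed limit.

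For the converse, I would first argue that convergence of $\varphi_n^n$ to a non-vanishing limit forces $\varphi_n\to 1$ uniformly on compacts, hence $X_{n1}\to 0$ in probability. Given this infinitesimality, tightness of the Lévy-type measures $m_n(\cdot):=n\P(X_{n1}\in\cdot)$ on $\overline{\RR}\setminus\{0\}$ follows from the bound $n(1-|\varphi_n(t)|^2) = O(1)$ available from the converging log. A subsequential argument with the uniqueness of the Lévy-Khintchine triple then pins down the limits in all three conditions: any subsequential limit $\tilde{m}$ of $m_n$ must, when combined with subsequential limits of the truncated first and second moments, reproduce the same characteristic exponent, which by uniqueness forces $\tilde{m}=m$, $\tilde{\sigma}_h^2=\sigma_h^2$, $\tilde{b}_h=b_h$.

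The main obstacle, as in every triangular-array limit theorem of this type, is the uniform control of the truncation error as $h \downarrow 0$: one needs the cubic remainder $n\E[|X_{n1}|^3\mathbf{1}_{|X_{n1}|\le h}]$ to vanish as $h\to 0$ uniformly in $n$, which is exactly where the bounded-variance hypothesis at truncation level $h$ enters. Interchanging the $n\to\infty$ and $h\downarrow 0$ limits in this way is the technical heart of the argument, and I would handle it via a standard two-step $\limsup_{n}\limsup_{h\to 0}$ estimate justified by dominated convergence in $t$ on compact intervals, as in Kallenberg's treatment.
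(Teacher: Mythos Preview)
The paper does not prove this proposition at all: it is stated as Corollary 15.16 in Kallenberg's \emph{Foundations of Modern Probability} and used as a black box. Your sketch follows the standard characteristic-function route (infinitesimality, splitting at a continuity level $h$, Taylor expansion on the small part, vague convergence on the large part, then $h\downarrow 0$), which is indeed the approach taken in Kallenberg's text, so there is nothing to compare against in the paper itself.
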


\begin{proposition}[Theorem 5.3 in \cite{HeavyPhenom}]\label{OrderStatConv}
	Suppose $\{X_{ni}:1\leq i\leq n\}_{n\geq 1}$ is a triangular array of random variables on $\overline{\RR}\setminus\{0\}$  such that each row consists of i.i.d.\ random variables. Let $N$ be a Poisson point process with intensity measure $m$. Then 
	$$\sum_{i =1}^n\delta_{X_{ni}}\Rightarrow N$$
	as $n\rightarrow\infty$ if and only if 
	$$n\PP(X_{n1}\in\cdot)\Rightarrow m(\cdot)$$
	as $n\rightarrow\infty$.
\end{proposition}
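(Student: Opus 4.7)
The plan is to prove the equivalence via Laplace functionals, which characterize convergence in distribution of point processes on the locally compact Polish space $\bar{\RR}\setminus\{0\}$. For a point process $\xi$ and a nonnegative $f \in C_K(\bar{\RR}\setminus\{0\})$, set $L_\xi(f) = \EE\exp(-\int f\,d\xi)$. The two standard facts I would invoke are that $\xi_n \Rightarrow N$ if and only if $L_{\xi_n}(f) \to L_N(f)$ for every such $f$, and that for a Poisson process $N$ with intensity $m$ one has $L_N(f) = \exp\bigl(-\int(1-e^{-f})\,dm\bigr)$.

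For the forward direction, using the i.i.d.\ structure of each row I would write
\begin{equation*}
L_{\xi_n}(f) = \bigl(\EE e^{-f(X_{n1})}\bigr)^n = \bigl(1 - \EE[1-e^{-f(X_{n1})}]\bigr)^n,
\end{equation*}
and observe that since $1-e^{-f}$ is a nonnegative element of $C_K(\bar{\RR}\setminus\{0\})$ with the same support as $f$, the vague hypothesis $n\PP(X_{n1}\in\cdot) \Rightarrow m$ gives $n\EE[1-e^{-f(X_{n1})}] \to \int(1-e^{-f})\,dm$. A Taylor expansion $\log(1-x) = -x + O(x^2)$ then yields $\log L_{\xi_n}(f) \to -\int(1-e^{-f})\,dm$, which is exactly $\log L_N(f)$. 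For the reverse direction, convergence of Laplace functionals combined with the same expansion in reverse gives $n\EE[1-e^{-f(X_{n1})}] \to \int(1-e^{-f})\,dm$ for every nonnegative $f \in C_K$; applying this to $\lambda g$ for a nonnegative $g \in C_K(\bar{\RR}\setminus\{0\})$, dividing by $\lambda$, and letting $\lambda \to 0^+$ (and using that $(1-e^{-\lambda g})/\lambda \to g$ uniformly on compacts of $\bar{\RR}\setminus\{0\}$) recovers $n\EE[g(X_{n1})] \to \int g\,dm$, which is the desired vague convergence.

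The main piece of bookkeeping is justifying the Taylor expansion of the logarithm, which requires $\EE[1-e^{-f(X_{n1})}] \to 0$; this follows from the a priori bound $\EE[1-e^{-f(X_{n1})}] \leq \PP(X_{n1} \in \supp(f)) = O(1/n)$, itself a consequence of the vague convergence applied to a continuous function that dominates $\mathbf{1}_{\supp(f)}$ on a slightly larger compact set. The finiteness of the limiting integral $\int(1-e^{-f})\,dm$ is automatic because $1-e^{-f}$ is bounded and compactly supported away from $0$, while $m$ is Radon on $\bar{\RR}\setminus\{0\}$; no additional subtlety arises at the point at infinity, since elements of $C_K(\bar{\RR}\setminus\{0\})$ extend continuously there and compact sets of $\bar{\RR}\setminus\{0\}$ are those bounded away from the origin.
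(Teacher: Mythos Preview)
The paper does not supply its own proof of this proposition; it is quoted directly as Theorem~5.3 of \cite{HeavyPhenom} and used as a black box. Your Laplace-functional argument is the standard textbook proof and is essentially correct.

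Two small points in the reverse direction deserve a sentence each. First, your justification that $\EE[1-e^{-f(X_{n1})}]\to 0$ invokes the vague-convergence hypothesis, but in the reverse direction that is precisely what you are proving; instead argue directly that if $a_n:=\EE[1-e^{-f(X_{n1})}]$ did not tend to $0$, then along a subsequence $(1-a_n)^n\to 0$, contradicting $L_N(f)>0$. Second, the step ``divide by $\lambda$ and let $\lambda\to 0^+$'' hides an interchange of the limits in $n$ and $\lambda$; make it rigorous by sandwiching with the elementary inequalities $g(1-\tfrac{\lambda}{2}\|g\|_\infty)\le (1-e^{-\lambda g})/\lambda\le g$, which give matching bounds on $\liminf_n$ and $\limsup_n$ of $n\EE[g(X_{n1})]$ after sending $\lambda\downarrow 0$.
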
 

\begin{lemma}[Vitali's convergence theorem for analytic functions, Lemma 2.14 in \cite{BaiSilverstein}]\label{lemma:VTC}
	Let $f_1,f_2,\dots$ be analytic in $D$, a connected open set of $\CC$, satisfying $|f_n(z)|\leq M$ for every $n$ and $z\in D$, and $f_n(z)$ converges as $n\rightarrow\infty$ for each $z$ in a subset of $D$ having an accumulation point in $D$. Then there exists a function $f$ analytic in $D$ for which $f_n(z)\rightarrow f(z)$ for all $z\in D$. Moreover on any set bounded by a contour interior to $D$, the convergence is uniform.
\end{lemma}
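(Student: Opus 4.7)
The plan is to deduce this from Montel's theorem together with the identity theorem for analytic functions, via a standard subsequence-uniqueness argument. The uniform bound $|f_n(z)|\leq M$ on $D$ says that $\{f_n\}$ is locally uniformly bounded, so Montel's theorem gives that $\{f_n\}$ is a normal family: every subsequence has a further subsequence that converges uniformly on compact subsets of $D$ to some function analytic in $D$. If one prefers not to cite Montel, normality can be derived directly: applying Cauchy's integral formula on a slightly enlarged compact $K'\supset K$ with $K'\subset D$ gives $|f_n'(z)|\leq M/\operatorname{dist}(z,\partial K')$ for $z\in K$, whence the family is uniformly Lipschitz, hence equicontinuous, on $K$. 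Arzelà--Ascoli combined with a diagonal argument over an exhaustion of $D$ by compacts then yields the normal-family property.

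The next step is to identify the locally uniform limit uniquely. Let $S\subset D$ denote the given set with an accumulation point in $D$ on which $f_n$ converges pointwise, and let $g\colon S\to\CC$ be the pointwise limit. Suppose that $\{f_{n_k}\}$ and $\{f_{m_j}\}$ are two subsequences of $\{f_n\}$, each with a further subsequence converging uniformly on compact subsets of $D$ to analytic limits $h_1$ and $h_2$ respectively. Each $h_i$ necessarily agrees with $g$ pointwise on $S$; since $S$ has an accumulation point in $D$ and $D$ is connected, the identity theorem forces $h_1\equiv h_2$ on $D$. Consequently every locally uniformly convergent subsequence of $\{f_n\}$ produces the same analytic limit $f$.

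Finally I would invoke the standard subsequence principle. The space of analytic functions on $D$ with the compact-open topology is metrizable, so if every subsequence of $\{f_n\}$ admits a further subsequence converging to the same limit $f$, then $\{f_n\}$ itself converges to $f$ in this topology; i.e.\ uniformly on compact subsets of $D$. In particular the convergence is uniform on any region whose closure is bounded by a contour interior to $D$, giving both conclusions of the lemma. The one step that is not purely formal is the normality assertion, but it reduces to the Cauchy derivative estimate above and is entirely classical, so no substantive obstacle arises.
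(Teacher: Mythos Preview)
Your argument is correct and is the standard proof of Vitali's theorem: normality from Montel (or directly via Cauchy estimates and Arzel\`a--Ascoli), uniqueness of subsequential limits via the identity theorem on the connected domain $D$, and then the subsequence principle in the metrizable compact-open topology to upgrade to full convergence. There are no gaps.

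For comparison, the paper does not prove this lemma at all; it simply records it as Lemma~2.14 of Bai--Silverstein and uses it as a black box. So there is no ``paper's own proof'' to match against---your write-up supplies exactly the classical argument that underlies the cited reference.
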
 Though Stieltjes transforms are not uniformly bounded on $\CC_+$, it is straightforward to apply Theorem \ref{lemma:VTC} to them by considering first $\CC_{+,m}=\{z\in\CC:\Im(z)>1/m\}$ and letting $m\rightarrow\infty$. 

\begin{lemma}[Theorem B.9 in \cite{BaiSilverstein}]\label{lemma:StieltjestoVague}
	Assume that $\{\mu_n\}$ is a sequence of functions probability measure, with Stieltjes transforms $\{s_n\}$. Then,\begin{equation}
		\lim_{n\rightarrow\infty}s_n(z)=s(z)
	\end{equation} for all $z\in\CC_+$ if and only if there exists a positive measure $\mu$ with Stieltjes transform $s$ such that $\mu_n$ converges to $\mu$ vaguely.
\end{lemma}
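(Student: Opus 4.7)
The plan is to prove both directions of the equivalence. The forward implication (vague convergence $\Rightarrow$ pointwise convergence of Stieltjes transforms) is a straightforward approximation argument, while the reverse implication (pointwise convergence of Stieltjes transforms $\Rightarrow$ vague convergence) uses Helly selection together with uniqueness of the Stieltjes transform via the Stieltjes inversion formula.

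For the forward direction, fix $z \in \CC_+$ and note that $f_z(x) = 1/(x-z)$ lies in $C_0(\RR)$: it is continuous, bounded by $1/\Im z$, and vanishes as $|x|\to\infty$. Given $\eps>0$, choose $K$ large enough that $|f_z(x)|<\eps$ for $|x|>K$, and let $\chi_K\in C_c(\RR)$ equal $1$ on $[-K,K]$ and vanish outside $[-K-1,K+1]$. Then $f_z\chi_K\in C_c(\RR)$, so by the definition of vague convergence $\int f_z\chi_K \, d\mu_n \to \int f_z\chi_K\, d\mu$. The tail error is bounded by $\eps(\mu_n(\RR)+\mu(\RR))\le 2\eps$, since the $\mu_n$ are probability measures and the vague limit satisfies $\mu(\RR)\le 1$. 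Letting $\eps\to 0$ yields $s_n(z)\to s(z)$.

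For the reverse direction, suppose $s_n(z)\to s(z)$ for every $z\in\CC_+$. Because $\{\mu_n\}$ is uniformly bounded in total mass, Helly's selection theorem (equivalently, sequential compactness of the unit ball of finite Radon measures under the weak-$*$ topology on $C_0(\RR)$) provides, from any subsequence, a further subsequence $\mu_{n_k}$ converging vaguely to a positive sub-probability measure $\tilde{\mu}$. Applying the forward direction along this subsubsequence gives that $s_{\tilde{\mu}}=s$. The Stieltjes inversion formula, $\tilde{\mu}((a,b)) = \lim_{\eta\to 0^+}\pi^{-1}\int_a^b \Im s(x+i\eta)\,dx$ for continuity points $a<b$ of $\tilde{\mu}$, then uniquely determines $\tilde{\mu}$ from $s$. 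Hence every vague subsequential limit coincides with a single positive measure $\mu$, and a standard metrizability argument for vague convergence upgrades this to convergence of the full sequence.

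The main obstacle is the asymmetry between vague and weak convergence: the $\mu_n$ are probability measures but mass can escape to infinity, so the limit $\mu$ need only be a positive sub-probability measure and we cannot strengthen the conclusion to weak convergence without additional tightness. This forces us to rely on uniqueness of the Stieltjes transform for finite positive measures (through Stieltjes inversion) rather than alternatives like characteristic-function inversion, and requires care in extending vague convergence from $C_c(\RR)$ to the non-compactly-supported $C_0$-function $f_z$.
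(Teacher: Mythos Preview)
Your proof is correct and follows the standard textbook argument. Note, however, that the paper does not actually supply its own proof of this lemma: it is stated in the appendix with the attribution ``Theorem B.9 in \cite{BaiSilverstein}'' and no proof is given, so there is nothing in the paper to compare against. Your argument is essentially the one found in Bai--Silverstein and other standard references: vague convergence implies convergence of integrals against $C_0$ functions (in particular against $x\mapsto (x-z)^{-1}$), and conversely Helly selection plus uniqueness via Stieltjes inversion pins down the vague limit.
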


For the following lemma, we use the notation of \cite{Shirai}. For a connected open domain $D \subset \CC$, let $\mathcal{H}(D)$ be the space of analytic functions on $D$ equipped with the topology of uniform convergence on compact subsets of $D$.

\begin{lemma}[Proposition 2.5 in \cite{Shirai}]\label{tightness of random analytic functions}
	Let $X_1, X_2, X_3,\dots$ be a sequence of random analytic functions on a connected open set $D\subset\CC$, with probability  distribution measures $\mu_{X_1},\mu_{X_2},\dots$ on $\mathcal{H}(D)$. If for every $K\subset D$ compact $\{\max_{z\in K}|X_n(z)| \}_{n\geq 1}$ is a tight sequence of random variables, then $\{\mu_{X_n}\}_{n\geq 1}$ is tight in the space of probability measures on $\mathcal{H}(D)$.
\end{lemma}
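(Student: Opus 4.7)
The plan is to combine Prokhorov's theorem with Montel's theorem. The space $\mathcal{H}(D)$ with the topology of uniform convergence on compact subsets is a separable, completely metrizable (Fréchet) space — one standard metric is $d(f,g) = \sum_{m} 2^{-m} \min(1, \sup_{z \in K_m}|f(z)-g(z)|)$ for a compact exhaustion $K_1 \subset K_2 \subset \cdots$ of $D$ with $K_m \subset \mathrm{int}(K_{m+1})$ and $\bigcup_m K_m = D$. Since this space is Polish, Prokhorov's theorem says that tightness of $\{\mu_{X_n}\}_{n \geq 1}$ is equivalent to the statement that for every $\eps > 0$ there is a compact set $\mathcal{F}_\eps \subset \mathcal{H}(D)$ with $\PP(X_n \in \mathcal{F}_\eps) \geq 1 - \eps$ for all $n$.

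To construct such an $\mathcal{F}_\eps$, I would use the hypothesis applied to each $K_m$: there exists $M_m > 0$ with $\PP(\sup_{z \in K_m} |X_n(z)| > M_m) \leq \eps / 2^m$ uniformly in $n$. Set
\begin{equation*}
\mathcal{F}_\eps := \Bigl\{ f \in \mathcal{H}(D) : \sup_{z \in K_m} |f(z)| \leq M_m \text{ for every } m \geq 1 \Bigr\}.
\end{equation*}
A union bound over $m$ yields $\PP(X_n \notin \mathcal{F}_\eps) \leq \eps$ for every $n$. The set $\mathcal{F}_\eps$ is closed in $\mathcal{H}(D)$ because uniform-on-compacts convergence preserves the bounds $\sup_{z \in K_m} |f(z)| \leq M_m$. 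By Montel's theorem, a family of analytic functions on $D$ that is uniformly bounded on every compact subset of $D$ is a normal family, hence relatively compact in $\mathcal{H}(D)$. Since $\mathcal{F}_\eps$ is both closed and contained in a relatively compact set, it is compact, and the proof is complete.

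There is no real obstacle here — the lemma is essentially a repackaging of Montel's theorem into the language of weak convergence of random elements in $\mathcal{H}(D)$. The only point requiring a small amount of care is verifying that $\mathcal{H}(D)$ is Polish so that Prokhorov's theorem applies in the direction we need (existence of compact sets capturing most of the mass from tightness of the laws); this is standard for Fréchet function spaces built from a compact exhaustion of $D$.
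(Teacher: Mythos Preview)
Your argument is correct and is the standard proof of this fact: Prokhorov's theorem on the Polish space $\mathcal{H}(D)$ reduces tightness of the laws to finding, for each $\eps>0$, a compact $\mathcal{F}_\eps\subset\mathcal{H}(D)$ of uniform mass $\geq 1-\eps$, and Montel's theorem (together with the compact exhaustion $K_m\subset\mathrm{int}(K_{m+1})$, so that any compact $K\subset D$ sits inside some $K_{m_0}$) shows the closed set you construct is compact. The paper does not give its own proof of this lemma; it simply quotes it as Proposition~2.5 of \cite{Shirai}, and your write-up is precisely the argument one expects to find there.
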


\begin{lemma}[Schatten Bound, see proof of Theorem 3.32 in \cite{Matrixineq}]\label{Schattent}
	Let $A$ be an $n\times n$ complex Hermitian matrix with rows $R_1,\dots,R_n$. Then for every $0<r\leq 2$,
	$$\sum_{k=1}^{n}|\lambda_k(A)|^r\leq\sum_{k =1}^n\|R_k\|_{2}^r,$$
\end{lemma}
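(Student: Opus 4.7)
The plan is to derive this from two classical ingredients: the Schur--Horn theorem, which states that the diagonal of a Hermitian matrix is majorized by its spectrum, and the Hardy--Littlewood--P\'olya inequality, which reverses the usual Jensen-type bound under majorization when applied to a concave test function. The exponent restriction $0<r\leq 2$ enters precisely because it is the range in which $t\mapsto t^{r/2}$ is concave on $[0,\infty)$, so the approach is tailored to the hypothesis.

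First, since $A$ is Hermitian one has $AA^*=A^2$, so the $k$-th diagonal entry of $A^2$ equals
$$(A^2)_{kk}=\sum_{j=1}^n|A_{kj}|^2=\|R_k\|_2^2.$$
The eigenvalues of $A^2$, meanwhile, are $|\lambda_k(A)|^2$. The Schur--Horn theorem then says that the vector $(\|R_k\|_2^2)_{k=1}^n$ of diagonal entries is majorized by the vector $(|\lambda_k(A)|^2)_{k=1}^n$ of eigenvalues of $A^2$, once both are arranged in non-increasing order.

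Next I would invoke the Hardy--Littlewood--P\'olya inequality in its concave form: if $(a_k)$ is majorized by $(b_k)$ and $f:[0,\infty)\to\RR$ is concave, then $\sum_k f(a_k)\geq\sum_k f(b_k)$. Taking $f(t)=t^{r/2}$, which is concave on $[0,\infty)$ for precisely $0<r\leq 2$, and applying this to the majorization from the previous step yields
$$\sum_{k=1}^n\|R_k\|_2^r=\sum_{k=1}^n\bigl(\|R_k\|_2^2\bigr)^{r/2}\geq\sum_{k=1}^n\bigl(|\lambda_k(A)|^2\bigr)^{r/2}=\sum_{k=1}^n|\lambda_k(A)|^r,$$
which is exactly the claimed bound.

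There is no substantive obstacle in this outline; the one subtle point is that majorization produces a lower bound rather than an upper bound on sums of concave evaluations, which is why the hypothesis $r\leq 2$ is sharp (for $r>2$ the function $t\mapsto t^{r/2}$ becomes convex and the inequality would flip). A useful sanity check is the case $r=2$, in which equality holds on both sides: both sums equal $\tr(A^2)$.
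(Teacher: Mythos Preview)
Your argument is correct. The paper itself does not supply a proof of this lemma but only cites the reference \cite{Matrixineq}; your route via the Schur--Horn majorization of the diagonal of $A^2$ by its spectrum, followed by the concave form of the Hardy--Littlewood--P\'olya inequality applied to $t\mapsto t^{r/2}$, is a standard and complete proof, and is in the spirit of the majorization-based arguments in that reference.
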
 where $\|\cdot \|_2$ is the Euclidean norm on $\CC^n$. 

\begin{lemma}[See \cite{HornTopics}, Chapter 3]\label{Basics}
	If $A$ and $B$ are $n\times n$ complex matrices and $s_1(A)\geq s_2(A)\geq\dots\geq s_n(A)$ and $s_1(B)\geq s_2(B)\geq\dots\geq s_n(B)$ are the singular values of $A$ and $B$ then 
	$$s_1(AB)\leq s_1(A)s_1(B)\text{ and }s_1(A+B)\leq s_1(A)+s_1(B),$$
	$$\max_{1 \leq k \leq n}|s_k(A+B)-s_k(A)|\leq s_1(B),$$
	$$s_{i+j-1}(A+B)\leq s_i(A)+s_j(B)$$
	for $1\leq i,j\leq n$ and $i+j\leq n+1$.
\end{lemma}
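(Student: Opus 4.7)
The plan is to deduce all four inequalities from the Courant--Fischer/variational characterization of singular values, together with the fact that $s_1(M)=\|M\|_{\mathrm{op}}=\sup_{\|x\|=1}\|Mx\|$ defines a norm on the space of $n\times n$ complex matrices. Writing everything in this language has the advantage that we avoid manipulating singular value decompositions directly and instead work with subspaces.

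First I would dispatch the two statements in the first display. Submultiplicativity $\|AB\|_{\mathrm{op}}\le\|A\|_{\mathrm{op}}\|B\|_{\mathrm{op}}$ is immediate from $\|ABx\|\le\|A\|_{\mathrm{op}}\|Bx\|\le\|A\|_{\mathrm{op}}\|B\|_{\mathrm{op}}\|x\|$, giving $s_1(AB)\le s_1(A)s_1(B)$. The triangle inequality for $\|\cdot\|_{\mathrm{op}}$ gives $s_1(A+B)\le s_1(A)+s_1(B)$. These require no minimax machinery.

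For the stability bound $\max_k|s_k(A+B)-s_k(A)|\le s_1(B)$, I would use the minimax formula
\[
s_k(M)=\min_{\substack{W\subset\CC^n\\ \dim W=n-k+1}}\ \max_{x\in W,\,\|x\|=1}\|Mx\|.
\]
For any admissible $W$, the triangle inequality in $\|\cdot\|_2$ gives $\|(A+B)x\|\le\|Ax\|+\|Bx\|\le\|Ax\|+s_1(B)$, so taking the max over $x\in W$ with $\|x\|=1$ and then the min over $W$ yields $s_k(A+B)\le s_k(A)+s_1(B)$. Swapping the roles of $A$ and $A+B$ (using $A=(A+B)+(-B)$ and $s_1(-B)=s_1(B)$) produces the reverse bound, and together they give the stated absolute value estimate.

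Finally, for the Weyl-type inequality $s_{i+j-1}(A+B)\le s_i(A)+s_j(B)$, I would let $W_A$ and $W_B$ be subspaces of codimensions $i-1$ and $j-1$ that achieve the minimax values for $s_i(A)$ and $s_j(B)$, so that $\max_{x\in W_A,\|x\|=1}\|Ax\|=s_i(A)$ and similarly for $B$. The intersection $W_A\cap W_B$ has codimension at most $(i-1)+(j-1)=i+j-2$, hence dimension at least $n-(i+j-1)+1$, which makes it an admissible test subspace in the minimax expression for $s_{i+j-1}(A+B)$. On this intersection, for any unit $x$, $\|(A+B)x\|\le\|Ax\|+\|Bx\|\le s_i(A)+s_j(B)$, and the bound follows. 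The only mild subtlety is the dimension count for $W_A\cap W_B$; once that is in hand each inequality is a one-line application of subadditivity in the appropriate direction, so I do not anticipate any real obstacle beyond being careful with the codimension arithmetic.
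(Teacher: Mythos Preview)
Your proof is correct. Each of the four inequalities is established cleanly via the operator norm and the Courant--Fischer minimax formula for singular values, and the dimension count for $W_A\cap W_B$ in the final part is right: $\dim(W_A\cap W_B)\ge(n-i+1)+(n-j+1)-n=n-(i+j-1)+1$, which is exactly what is needed.

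As for comparison with the paper: there is nothing to compare. The paper does not prove this lemma at all; it simply cites Chapter~3 of Horn and Johnson's \emph{Topics in Matrix Analysis} as a reference for these standard singular value inequalities. Your argument is the classical one found in that and similar references, so in effect you have supplied the proof the paper chose to outsource.
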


\begin{lemma}[See \cite{HeavyIId} Lemma C.1 and \cite{feller-vol-2} Theorem VIII.9.2]\label{TruncatedMoments}
	Let $Z$ be a positive random variable such that for every $t>0$, 
	$$\PP(Z\geq t)=L(t)t^{-\alpha}$$
	for some slowly varying function $L$ and some $\alpha\in(0,2)$.
	Then for every $p>\alpha$,
	$$\lim\limits_{t\rightarrow\infty}\frac{\EE[Z^p\indicator{Z\leq t}]}{ c(p)L(t)t^{p-\alpha}}=1,$$	
	where $c(p):=\alpha/(p-\alpha)$.
\end{lemma}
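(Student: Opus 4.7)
The plan is to use the layer-cake (Fubini) representation to reduce the truncated moment to an integral of the tail probability, and then invoke Karamata's theorem on integrals of regularly varying functions. First I would apply the identity $Z^p \indicator{Z \leq t} = \int_0^\infty p s^{p-1} \indicator{s \leq Z \leq t}\, ds$ and take expectations:
$$\EE[Z^p \indicator{Z \leq t}] = \int_0^t p s^{p-1} \PP(s \leq Z \leq t)\, ds = \int_0^t p s^{p-1} \PP(Z \geq s)\, ds - t^p \PP(Z \geq t),$$
where the second equality uses $\PP(Z > t) = \PP(Z \geq t)$ outside an at most countable set of atoms of $Z$. Substituting the assumed tail $\PP(Z \geq s) = L(s) s^{-\alpha}$ recasts this as
$$\EE[Z^p \indicator{Z \leq t}] = p \int_0^t s^{p-\alpha-1} L(s)\, ds - L(t)\, t^{p-\alpha}.$$

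Next I would analyze the integral on the right. Since $p > \alpha$, the integrand $s^{p - \alpha - 1} L(s)$ has exponent strictly greater than $-1$, and the crude bound $\PP(Z \geq s) \leq 1$ handles any worry caused by stating the tail formula down to $s = 0$: the contribution of $(0, s_0]$ for any fixed $s_0 > 0$ is at most $s_0^p < \infty$, which is $o(L(t) t^{p-\alpha})$ since $p > \alpha$ forces $L(t) t^{p-\alpha} \to \infty$. The main input is Karamata's theorem: for any slowly varying $L$ and any $\beta > -1$,
$$\int_0^t s^{\beta} L(s)\, ds \sim \frac{L(t)\, t^{\beta+1}}{\beta+1}, \qquad t \to \infty.$$
Applying this with $\beta = p - \alpha - 1$ gives
$$p \int_0^t s^{p - \alpha - 1} L(s)\, ds \sim \frac{p}{p - \alpha} L(t)\, t^{p - \alpha}.$$

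Combining the two asymptotics produces
$$\EE[Z^p \indicator{Z \leq t}] \sim \left( \frac{p}{p - \alpha} - 1 \right) L(t)\, t^{p - \alpha} = \frac{\alpha}{p - \alpha}\, L(t)\, t^{p - \alpha},$$
which is exactly the claimed statement with $c(p) = \alpha/(p - \alpha)$. There is no serious obstacle: the entire argument is driven by Karamata's theorem, which is the reason this lemma is cited directly to Feller. The only bookkeeping is to verify that the subtracted boundary term $L(t) t^{p-\alpha}$ and the negligible contribution near $s = 0$ combine correctly with the leading piece $\frac{p}{p-\alpha} L(t) t^{p-\alpha}$ to yield the stated constant $\alpha/(p-\alpha)$, which they do.
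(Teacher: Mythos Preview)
Your proof is correct. The paper does not actually supply its own proof of this lemma; it is stated as a citation to Feller's Theorem~VIII.9.2 (Karamata's theorem) and to Lemma~C.1 of \cite{HeavyIId}, and your argument---layer-cake representation followed by Karamata---is exactly the standard derivation underlying those references.
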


\bibliography{heavylaplacian}
\bibliographystyle{abbrv}

\end{document}